\newcommand{\deleted}[1]{}
\newcommand{\delete}[1]{}
\newcommand{\mynote}[1]{}
\newcommand\notes[1]{}
\newcommand\changed[1]{#1}
\newtheorem{theorem}{Theorem}[section]
\newtheorem{lemma}[theorem]{Lemma}
\newtheorem{coro}[theorem]{Corollary}
\newtheorem{prop}[theorem]{Proposition}
\theoremstyle{definition}
\newtheorem{defn}[theorem]{Definition}
\newtheorem{exam}[theorem]{Example}
\newtheorem{prop-def}{Proposition-Definition}[section]
\newtheorem{coro-def}{Corollary-Definition}[section]
\newcommand{\nc}{\newcommand}
\nc{\tred}[1]{\textcolor{red}{#1}}
\nc{\tblue}[1]{\textcolor{blue}{#1}}
\nc{\tgreen}[1]{\textcolor{green}{#1}}
\nc{\tpurple}[1]{\textcolor{purple}{#1}}
\nc{\btred}[1]{\textcolor{red}{\bf #1}}
\nc{\btblue}[1]{\textcolor{blue}{\bf #1}}
\nc{\btgreen}[1]{\textcolor{green}{\bf #1}}
\nc{\btpurple}[1]{\textcolor{purple}{\bf #1}}
\renewcommand{\Bbb}{\mathbb}
\newcommand{\efootnote}[1]{}
\renewcommand{\textbf}[1]{}
\nc{\mlabel}[1]{\label{#1}}  
\nc{\mcite}[1]{\cite{#1}}  
\nc{\mref}[1]{\ref{#1}}  
\nc{\mbibitem}[1]{\bibitem{#1}} 
\nc{\mlabel}[1]{\label{#1}  
{\hfill \hspace{1cm}{\bf{{\ }\hfill(#1)}}}}
\nc{\mcite}[1]{\cite{#1}{{\bf{{\ }(#1)}}}}  
\nc{\mref}[1]{\ref{#1}{{\bf{{\ }(#1)}}}}  
\nc{\mbibitem}[1]{\bibitem[\bf #1]{#1}} 
\renewcommand\geq{\geqslant}
\renewcommand\leq{\leqslant}
\renewcommand\bar[1]{\overline{#1}}
\renewcommand\tilde[1]{\widetilde{#1}}
\nc{\lead}{\mathrm{Lead}}
\nc{\Id}{\mathrm{Id}}
\nc{\Irr}{\mathrm{Irr}}
\nc{\End}{\mathrm{End}}
\nc{\vx}{\sigma}
\nc{\vy}{\tau}
\nc{\dvx}{\sigma^{(1)}}
\nc{\dvy}{\tau^{(1)}}
\nc{\done}{\vep}
\nc{\citep}[1]{\cite{#1}}
\nc{\wt}{\mathrm{wt}}
\nc{\bre}[1]{|#1|} \nc{\mapmonoid}{\frakM}
\nc{\disjoint}{\frakM'}
\nc{\ncpoly}[1]{\langle #1\rangle}  
\nc{\mapm}[1]{\lfloor\!|{#1}|\!\rfloor}             
\nc{\diff}[1]{{}^\NC\{ #1 \}}
\nc{\disj}[1]{\{{#1}\}'}
\nc{\mdisj}[1]{\frakM'(#1)}
\nc{\brho}{\bar{\rho}}
\nc{\om}{\bar{\frakm}}
\nc{\frakn}{\mathfrak n}
\nc{\ddeg}[1]{^{(#1)}}
\nc{\opset}{X} \nc{\genset}{{Z}}
\nc{\NC}{\mathrm{{NC}}}
\nc{\leaf}{\mathrm{leaf}}
\nc{\twig}{\mathrm{twig}}
\nc{\fe}{\mathrm{fl}}
\nc{\munderline}[1]{#1}
\nc{\bo}{o}
\nc{\ofe}{\mathrm{ofl}}
\nc{\dfe}{\mathrm{dfe}}
\nc{\fex}{\mathrm{fex}}
\nc{\dl}{\mathrm{dlex}}
\nc{\db}{\mathrm{db}}
\nc{\bin}[2]{ (_{\stackrel{\scs{#1}}{\scs{#2}}})}  
\nc{\binc}[2]{ \left (\!\! \begin{array}{c} \scs{#1}\\
    \scs{#2} \end{array}\!\! \right )}  
\nc{\bincc}[2]{  \left ( {\scs{#1} \atop
    \vspace{-1cm}\scs{#2}} \right )}  
\nc{\bs}{\bar{S}}
\nc{\cosum}{\sqsubset}
\nc{\la}{\longrightarrow}
\nc{\rar}{\rightarrow}
\nc{\dar}{\downarrow}
\nc{\dprod}{**}
\nc{\dap}[1]{\downarrow \rlap{$\scriptstyle{#1}$}}
\nc{\md}{\mathrm{dth}}
\nc{\uap}[1]{\uparrow \rlap{$\scriptstyle{#1}$}}
\nc{\defeq}{\stackrel{\rm def}{=}}
\nc{\disp}[1]{\displaystyle{#1}}
\nc{\dotcup}{\ \displaystyle{\bigcup^\bullet}\ }
\nc{\gzeta}{\bar{\zeta}}
\nc{\hcm}{\ \hat{,}\ }
\nc{\hts}{\hat{\otimes}}
\nc{\barot}{{\otimes}}
\nc{\free}[1]{\bar{#1}}
\nc{\uni}[1]{\tilde{#1}}
\nc{\hcirc}{\hat{\circ}}
\nc{\leng}{\ell}
\nc{\lleft}{[}
\nc{\lright}{]}
\nc{\lc}{\lfloor}
\nc{\rc}{\rfloor}
\nc{\lb}{[} 
\nc{\rb}{]} 
\nc{\curlyl}{\left \{ \begin{array}{c} {} \\ {} \end{array}
    \right .  \!\!\!\!\!\!\!}
\nc{\curlyr}{ \!\!\!\!\!\!\!
    \left . \begin{array}{c} {} \\ {} \end{array}
    \right \} }
\nc{\longmid}{\left | \begin{array}{c} {} \\ {} \end{array}
    \right . \!\!\!\!\!\!\!}
\nc{\onetree}{\bullet}
\nc{\ora}[1]{\stackrel{#1}{\rar}}
\nc{\ola}[1]{\stackrel{#1}{\la}}
\nc{\ot}{\otimes}
\nc{\mot}{{{\boxtimes\,}}}
\nc{\otm}{\overline{\boxtimes}}
\nc{\sprod}{\bullet}
\nc{\scs}[1]{\scriptstyle{#1}}
\nc{\mrm}[1]{{\rm #1}}
\nc{\msum}{\sum\limits}
\nc{\margin}[1]{\marginpar{\rm #1}}   
\nc{\dirlim}{\displaystyle{\lim_{\longrightarrow}}\,}
\nc{\invlim}{\displaystyle{\lim_{\longleftarrow}}\,}
\nc{\mvp}{\vspace{0.3cm}}
\nc{\tk}{^{(k)}}
\nc{\tp}{^\prime}
\nc{\ttp}{^{\prime\prime}}
\nc{\svp}{\vspace{2cm}}
\nc{\vp}{\vspace{8cm}}
\nc{\proofbegin}{\noindent{\bf Proof: }}
\nc{\proofend}{$\blacksquare$ \vspace{0.3cm}}
\nc{\modg}[1]{\!<\!\!{#1}\!\!>}
\nc{\intg}[1]{F_C(#1)}
\nc{\lmodg}{\!<\!\!}
\nc{\rmodg}{\!\!>\!}
\nc{\cpi}{\widehat{\Pi}}
\nc{\sha}{{\mbox{\cyr X}}}  
\nc{\shap}{{\mbox{\cyrs X}}} 
\nc{\shpr}{\diamond}    
\nc{\shp}{\ast}
\nc{\shplus}{\shpr^+}
\nc{\shprc}{\shpr_c}    
\nc{\msh}{\ast}
\nc{\zprod}{m_0}
\nc{\oprod}{m_1}
\nc{\vep}{\varepsilon}
\nc{\labs}{\mid\!}
\nc{\rabs}{\!\mid}
\nc{\dth}{d}
\nc{\mmbox}[1]{\mbox{\ #1\ }}
\nc{\fp}{\mrm{FP}} \nc{\rchar}{\mrm{char}} \nc{\Fil}{\mrm{Fil}}
\nc{\Mor}{Mor\xspace}
\nc{\gmzvs}{gMZV\xspace}
\nc{\gmzv}{gMZV\xspace}
\nc{\mzv}{MZV\xspace}
\nc{\mzvs}{MZVs\xspace}
\nc{\Hom}{\mrm{Hom}} \nc{\id}{\mrm{id}} \nc{\im}{\mrm{im}}
\nc{\incl}{\mrm{incl}} \nc{\map}{\mrm{Map}} \nc{\mchar}{\rm char}
\nc{\nz}{\rm NZ} \nc{\supp}{\mathrm Supp}
\nc{\Alg}{\mathbf{Alg}}
\nc{\Bax}{\mathbf{Bax}}
\nc{\bff}{\mathbf f}
\nc{\bfk}{{\bf k}}
\nc{\bfone}{{\bf 1}}
\nc{\bfx}{\mathbf x}
\nc{\bfy}{\mathbf y}
\nc{\base}[1]{\bfone^{\otimes ({#1}+1)}} 
\nc{\Cat}{\mathbf{Cat}}
\nc{\detail}{\marginpar{\bf More detail}
    \noindent{\bf Need more detail!}
    \svp}
\nc{\Int}{\mathbf{Int}}
\nc{\Mon}{\mathbf{Mon}}
\nc{\rbtm}{{shuffle }}
\nc{\rbto}{{Rota-Baxter }}
\nc{\remarks}{\noindent{\bf Remarks: }}
\nc{\Rings}{\mathbf{Rings}}
\nc{\Sets}{\mathbf{Sets}}
\nc{\BA}{{\Bbb A}} \nc{\CC}{{\Bbb C}} \nc{\DD}{{\Bbb D}}
\nc{\EE}{{\Bbb E}} \nc{\FF}{{\Bbb F}} \nc{\GG}{{\Bbb G}}
\nc{\HH}{{\Bbb H}} \nc{\LL}{{\Bbb L}} \nc{\NN}{{\Bbb N}}
\nc{\KK}{{\Bbb K}} \nc{\QQ}{{\Bbb Q}} \nc{\RR}{{\Bbb R}}
\nc{\TT}{{\Bbb T}} \nc{\VV}{{\Bbb V}} \nc{\ZZ}{{\Bbb Z}}
\nc{\cala}{{\mathcal A}} \nc{\calc}{{\mathcal C}}
\nc{\cald}{{\mathcal D}} \nc{\cale}{{\mathcal E}}
\nc{\calf}{{\mathcal F}} \nc{\calg}{{\mathcal G}}
\nc{\calh}{{\mathcal H}} \nc{\cali}{{\mathcal I}}
\nc{\call}{{\mathcal L}} \nc{\calm}{{\mathcal M}}
\nc{\caln}{{\mathcal N}} \nc{\calo}{{\mathcal O}}
\nc{\calp}{{\mathcal P}} \nc{\calr}{{\mathcal R}}
\nc{\cals}{{\mathcal S}}
\nc{\calt}{{\mathcal T}} \nc{\calw}{{\mathcal W}}
\nc{\calk}{{\mathcal K}} \nc{\calx}{{\mathcal X}}
\nc{\CA}{\mathcal{A}}
\nc{\fraka}{{\mathfrak a}} \nc{\frakA}{{\mathfrak A}}
\nc{\frakb}{{\mathfrak b}} \nc{\frakB}{{\mathfrak B}}
\nc{\frakD}{{\mathfrak D}} \nc{\frakH}{{\mathfrak H}}
\nc{\frakM}{{\mathfrak M}} \nc{\bfrakM}{\overline{\frakM}}
\nc{\frakm}{{\mathfrak m}} \nc{\frakP}{{\mathfrak P}}
\nc{\frakN}{{\mathfrak N}} \nc{\frakp}{{\mathfrak p}}
\nc{\frakS}{{\mathfrak S}} \nc{\frakx}{{\mathfrak x}}
\nc{\ox}{\bar{\frakx}} \nc{\frakX}{{\mathfrak X}}
\nc{\fraky}{{\mathfrak y}} \nc\dop{\delta}
\font\cyr=wncyr10
\font\cyrs=wncyr7
\nc{\redt}[1]{\textcolor{red}{#1}}
\nc{\gl}[1]{\textcolor{red}{\tt \underline{GL:}#1}}
\nc{\ql}[1]{\textcolor{blue}{\tt \underline{QL:}#1}}
\nc{\xing}[1]{\textcolor{purple}{\tt \underline{Xing:}#1}}
\nc{\rbo}{R_{RB}\langle Q \rangle}
\nc{\rbeta}{(R,P)} \nc{\salpha}{(S,\alpha)} \nc{\tgamma}{(T,\gamma)}
\nc{\otr}{\ot_{\rbeta}} \nc{\rhom}{{\rm Hom}}
\nc{\Free}{F_1} \nc{\rf}{\tilde{F}}
\nc{\revise}[1]{\textcolor{red}{#1}}
\begin{document}
\title{Rota-Baxter modules toward derived functors}

\author{Xing Gao}
\address{School of Mathematics and Statistics, Key Laboratory of Applied Mathematics and Complex Systems, Lanzhou
University, Lanzhou, Gansu 730000, P.R. China}
\email{gaoxing@lzu.edu.cn}

\author{Li Guo}
\address{
    Department of Mathematics and Computer Science,
         Rutgers University,
         Newark, NJ 07102, USA}
\email{liguo@rutgers.edu}

\author{Li Qiao}
\address{School of Mathematics and Statistics, Key Laboratory of Applied Mathematics and Complex Systems, Lanzhou
University, Lanzhou, Gansu 730000, P.R. China}
\email{liqiaomail@126.com}


\date{\today}

\begin{abstract}
In this paper we study Rota-Baxter modules with emphasis on the role played by the Rota-Baxter operators and resulting difference between Rota-Baxter modules and the usual modules over an algebra. We introduce the concepts of free, projective, injective and flat Rota-Baxter modules. We give the construction of free modules and show that there are enough projective, injective and flat Rota-Baxter modules to provide the corresponding resolutions for derived functor.
\end{abstract}

\subjclass[2010]{16W99,16S10,16T10,16T30}

\keywords{
Rota-Baxter algebra; Rota-Baxter module; free module; projective module;  injective module; flat module; ring of Rota-Baxter operators}

\date{\today}

\maketitle

\tableofcontents

\setcounter{section}{0}


\section{Introduction}

Motivated by his probability study~\mcite{Ba}, G. Baxter introduced the concept of a (Rota-)Baxter algebra in 1960. To recall its definition, let $\bfk$ be a commutative ring with identity $\bfone_{\bfk}$ and fix a $\lambda\in \bfk$.
A Rota-Baxter algebra of weight $\lambda$ is a pair $(R,P)$ where $R$ is an algebra and $P$ is a linear operator on $R$ satisfying the {\bf Rota-Baxter axiom}
\begin{equation}
P(r)P(s)=P(rP(s))+P(P(r)s)+\lambda P(rs) \quad \text{for all } r, s\in R.
\mlabel{eq:rba}
\end{equation}
In the 1960s through 1990s, this algebraic structure was studied from analytic and combinatorial viewpoints with contributions from well-known mathematicians such as Atkinson, Cartier and Rota~\mcite{At,Ca,Ro1,Ro2}. In the Lie algebra context, it was related to the operator form of the classical Yang-Baxter equation by the Russian physicists~\mcite{STS}. Since the beginning of this century, this area has experienced a burst of development with broad applications ranging from number theory to quantum field theory~\mcite{Ag,BCQ,Bai,CK,EGM,GK1,GZ}. See~\mcite{Gus} for a survey and ~\mcite{Gub} for a more detailed treatment.

Representation theory is an important aspect in the study of any algebraic structure. A representation of a Rota-Baxter algebra is made more involved because of the Rota-Baxter operator $P$ on top of the algebra $R$. As introduced in~\mcite{GL} (see also~\mcite{LQ}), a (left) Rota-Baxter module is defined to be a (left) $R$-module $M$ together with a linear operator $p$ on $M$ which satisfies the module form of Eq.~(\mref{eq:rba}):
\begin{equation}
P(r)p(m) = p(rp(m))+p(P(r)m)+\lambda p(rm) \quad \text{for all } r\in R, m\in M.
\mlabel{eq:rbm}
\end{equation}
Note that for any $\bfk$-algebra $R$ and $\lambda\in \bfk$, the scalar product operator
$$ R\longrightarrow R, \quad r\mapsto -\lambda r \quad \text{for all } r\in R,$$
is a Rota-Baxter operator of weight $\lambda$. Thus any $\bfk$-algebra can be naturally regarded as a Rota-Baxter algebra of weight $\lambda$. Likewise, any $R$-module with the same scalar product operator is a Rota-Baxter module. Thus the study of Rota-Baxter modules generalizes the study of the usual modules.

In this paper, we study Rota-Baxter modules as a first step to study their homological algebra. Thus we study the free, projective, injective and flat objects in the category of Rota-Baxter modules. We show that there are enough of these objects in this category, enabling us to define the derived functors in the category of Rota-Baxter modules.

As observed in~\mcite{GL}, a Rota-Baxter module can be regarded as a module on the ring of Rota-Baxter operators on the Rota-Baxter algebra. Since the ring of Rota-Baxter operators in general is not yet well-understood, it is useful to study Rota-Baxter modules via a direct approach as we are taking in this paper. Further, this approach makes it easier to see the difference between Rota-Baxter modules and the usual modules.
For example, a right Rota-Baxter module needs to be defined by an identity different from Eq.~(\mref{eq:rbm}), imposing a particularly strong condition for a Rota-Baxter algebra to be a right module or a bimodule over itself. See Propositions~\mref{pp:rrbm} and~\mref{pp:rbbm}.
Further, contrary to the fact that an algebra is a free module over itself, a Rota-Baxter algebra is not a free Rota-Baxter module over itself, but satisfies a universal property in a restricted sense. See Theorem~\mref{thm:fmrbm}.
Overall, even though the concepts for Rota-Baxter modules can be defined in analogue to those for modules, their constructions needs new ingredients.

Here is an outline of the paper. In Section~\mref{sec:free}, we first introduce basic notations on Rota-Baxter modules, emphasizing the difference between a right Rota-Baxter module and a left one. We then construct free operated modules and then utilize them to obtain free Rota-Baxter modules by taking quotients. Further a usual free module is characterized as a free Rota-Baxter module with an additional restriction. In Section~\mref{sec:pi} the concepts of a projective Rota-Baxter module and an injective Rota-Baxter module are defined. It is shown that there are enough projective and injective Rota-Baxter modules to obtain projective and injective resolutions of a Rota-Baxter module, allowing the definition of Rota-Baxter homology and cohomology groups. In Section~\mref{sec:flat}, the concept of a tensor product over a Rota-Baxter algebra is introduced from which a flat Rota-Baxter module is defined. It is shown that free and more generally projective Rota-Baxter modules are flat Rota-Baxter modules.

Throughout the paper, all algebras, linear maps and tensor products are taken over the base ring $\bfk$ unless otherwise stated.

\section{Free Rota-Baxter modules}
\mlabel{sec:free}
After introducing basic notions on Rota-Baxter modules, emphasizing their difference from modules over an algebra, we give a construction of free Rota-Baxter modules through operated modules.

\subsection{Rota-Baxter modules}
We first recall the notion of left Rota-Baxter modules from~\cite{GL,LQ} before introducing the different notion of right Rota-Baxter modules.
\begin{defn}
Let $(R,P)$ be a Rota-Baxter $\bf k$-algebras of weight $\lambda$.
\begin{enumerate}
\item A {\bf left (Rota-Baxter) $(R,P)$-module} $(M,p)$ is a
left $R$-module $M$ together with a \bfk-linear operator $p: M \longrightarrow M$ such that
\begin{equation}
P(r)p(m)=p(P(r)m) + p(rp(m))+\lambda p(rm) \quad \text{for all } r\in R, m\in M. \mlabel{eq:lrbm}
\end{equation}
\item For left $(R,P)$-modules $(M,p)$ and $(M',p')$, a {\bf left $(R,P)$-module homomorphism} is a left $R$-module homomorphism $\phi: M \longrightarrow M'$ such that $\phi \circ p=p' \circ \phi$.
\item A {\bf left $(R,P)$-module monomorphism (resp. epimorphism, isomorphism)} is defined to be an injective (resp. surjective, bijective) left $(R,P)$-module homomorphism.
\item A {\bf left $\rbeta$-submodule} of $(M,p)$ is a submodule $N$ of $R$-module $M$ such that $p(N)\subseteq N$, giving the pair $(N,p|_N)$.
\end{enumerate}
\end{defn}

To define a quotient module of a left Rota-Baxter module, we have
\begin{lemma}
Let $(R,P)$ be a Rota-Baxter \bfk-algebra of weight $\lambda$, $(M,p)$ a left $(R,P)$-module and $(N,p|_N)$ a submodule of $(M,p)$. The pair
$(M/N,\bar{p})$ is a left $\rbeta$-module, where
$$\bar{p}: M/N \longrightarrow M/N, \ m+N \longmapsto p(m)+N.$$
We call $(M/N, \bar{p})$ the {\bf quotient $\rbeta$-module} of $(M,p)$ by $(N, p|_N)$.
\mlabel{lem:quotmod}
\end{lemma}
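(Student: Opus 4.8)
The plan is to check directly that $(M/N,\bar p)$ satisfies the three requirements in the definition of a left $\rbeta$-module: that $\bar p$ is a well-defined map, that it is $\bfk$-linear, and that the pair satisfies the Rota-Baxter module identity~(\mref{eq:lrbm}). Since $M/N$ is already the quotient of the underlying left $R$-module $M$ by the $R$-submodule $N$, its left $R$-module structure is the standard one and needs no comment; the only genuine work concerns the operator $\bar p$.

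The first and only delicate point is that $\bar p$ is well defined, i.e. independent of the choice of coset representative. Suppose $m+N=m'+N$, so that $m-m'\in N$. Since $p$ is $\bfk$-linear we have $p(m)-p(m')=p(m-m')$, and because $(N,p|_N)$ is a submodule of $(M,p)$ we have $p(N)\subseteq N$, whence $p(m-m')\in N$. Therefore $p(m)+N=p(m')+N$, and $\bar p$ is well defined. This is precisely the step where the hypothesis $p(N)\subseteq N$ is essential; I expect it to be the only real obstacle, everything else being formal.

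Once $\bar p$ is well defined, its $\bfk$-linearity is inherited from that of $p$ together with the $\bfk$-linearity of the canonical projection $\pi\colon M\to M/N$, $m\mapsto m+N$: by construction $\bar p\circ\pi=\pi\circ p$, and since $\pi$ is a surjective $\bfk$-linear map and $p$ is $\bfk$-linear, $\bar p$ is $\bfk$-linear as well.

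It remains to verify the identity~(\mref{eq:lrbm}) for $(M/N,\bar p)$, and the clean way is to transport the corresponding identity in $M$ along $\pi$. Indeed $\pi$ is a left $R$-module homomorphism satisfying $\pi\circ p=\bar p\circ\pi$, so applying $\pi$ to the identity $P(r)p(m)=p(P(r)m)+p(rp(m))+\lambda p(rm)$, valid in $M$, and using that $\pi$ is $R$-linear and commutes with the operators in the sense above, yields $P(r)\bar p(\bar m)=\bar p(P(r)\bar m)+\bar p(r\bar p(\bar m))+\lambda\bar p(r\bar m)$ for all $r\in R$ and $\bar m=m+N\in M/N$. As every element of $M/N$ has the form $m+N$, this establishes~(\mref{eq:lrbm}) and completes the proof; this last part is purely a matter of pushing the identity in $M$ through the surjection $\pi$.
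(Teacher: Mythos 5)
Your proposal is correct and follows essentially the same route as the paper: well-definedness of $\bar p$ from $p(N)\subseteq N$, followed by verification of Eq.~(\mref{eq:lrbm}) by reducing to the identity in $M$ — your phrasing via the projection $\pi$ and the relation $\bar p\circ\pi=\pi\circ p$ is just a more abstract rendering of the paper's coset-by-coset computation. The extra details you supply (the explicit representative-independence argument and the $\bfk$-linearity check) are fine and merely make explicit what the paper leaves implicit.
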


\begin{proof}
Since $p(N)\subseteq N$, the prescription of $\overline{p}$ is well-defined. Next, we verify that $\bar{p}$ satisfies Eq.~(\mref{eq:lrbm}).
For any $r\in R$, $m\in M$, we have
\begin{align*}
 P(r)\overline{p}(m+N) &=P(r)(p(m)+N) \\
 &= P(r)p(m) + N \\
 &= p(P(r)m)+p(rp(m))+\lambda p(rm) + N \\
 &= (p(P(r)m)+N) + (p(rp(m))+N) + (\lambda p(rm) + N) \\
 &= \overline{p}(P(r)(m+N))+\overline{p}(r\overline{p}(m+N))+\lambda \overline{p}(r(m+N)),
\end{align*}
as required.
\end{proof}

Denote by ${\bf _{(R,P)}Mod}$ the category of left $(R,P)$-modules,
with its objects the left $(R,P)$-modules and its morphisms the $(R,P)$-module homomorphisms.

The following are some examples of left Rota-Baxter modules.

\begin{exam}
Let $(R,P)$ be a Rota-Baxter \bfk-algebra of weight $\lambda$. Then
\begin{enumerate}
\item With $(R,P)$ acting on itself on the left, $(R,P)$ is a left $\rbeta$-module.

\item As in the case of the usual module theory over an algebra, any left Rota-Baxter ideal $I$ of $(R,P)$
(meaning a left ideal $I$ of $R$ such that $P(I)\subseteq I$) together with the restriction $P: I \longrightarrow I$ is a left
 $(R,P)$-module. Then $(R/I, \bar{P})$ is also a left
 $(R,P)$-module by Lemma~\mref{lem:quotmod}.

 \item Let $R[x]$ be the ring of polynomials with coefficients in $R$. Define
\begin{equation*}
\calp: R[x] \longrightarrow R[x], \ \sum_{i=0}^{n} c_{i} x^{i} \longmapsto \sum_{i=0}^{n} P(c_{i}) x^{i}.
\mlabel{eq:rbpo}
\end{equation*}
Then $(R[x],\calp)$ is a Rota-Baxter \bfk-algebra of weight $\lambda$ \mcite{Ro1,Gub}, and $(R[x],\calp)$ is a Rota-Baxter left $(R,P)$-module.
\end{enumerate}
\end{exam}

The difference between a module over an algebra and a Rota-Baxter module can already be observed by the concept of a Rota-Baxter right module.

\begin{defn}
Let $(R,P)$ be a Rota-Baxter algebra of weight $\lambda$.
A {\bf (Rota-Baxter) right $(R,P)$-module} $(M,p)$ is a
right $R$-module $M$ together with a \bfk-linear operator $p: M \longrightarrow M$ such that
\begin{equation}
p(mP(r))=p(m)P(r) + p(p(m)r)+\lambda p(m)r \quad \text{for all } r\in R, m\in M. \mlabel{eq:rrbm}
\end{equation}
\mlabel{defn:rbbm}
\end{defn}
A right $(R,P)$-module homomorphism is defined similarly to that for left $(R,P)$-modules.

The quite unorthodox definition of a Rota-Baxter right module originates from the Rota-Baxter operator and can be justified as follows. See Proposition~\ref{prop:bihom} for an application of Rota-Baxter right modules.

Taking left multiplications by elements of $R$, as well as the action of $p$, as linear operators in $\End(M)$, then Eq.~(\mref{eq:lrbm}):
$$P(r)p(m)=p(P(r)m)+p(r p(m))+\lambda p(r m) \quad \text{for all } r\in R, m\in M,$$
can be rewritten as
$$(P(r)\circ p)(m)=(p\circ P(r))(m)+(p\circ r \circ p)(m)+\lambda (p\circ r) (m),$$
regarding $M$ as a left $\End(M)$-module.
Then the corresponding right $\End(M)$-action on $M$ is
$$
(m)(P(r)\circ p)=(m)(p\circ P(r))+(m)(p\circ r \circ p)+\lambda (m)(p\circ r),$$
acting from the left to the right,
which gives
$$
(m P(r))p=((m)p)P(r)+(((m)p)r)p+\lambda ((m)p)r).
$$
This is Eq.~(\mref{eq:rrbm}).

In particular, if Rota-Baxter algebra $(R,P)$ is viewed as a right Rota-Baxter module over itself, then it needs to satisfy
\begin{equation}
P(rP(s))=P(r)P(s) + P(P(r)s)+\lambda P(r)s \quad \text{for all } r, s\in R.
\mlabel{eq:rrbo}
\end{equation}

\begin{prop}
Let $(R,P)$ be a Rota-Baxter \bfk-algebra of weight $\lambda$. Then $(R,P)$ is a right $(R,P)$-module if and only if
the Rota-Baxter operator $P$ satisfies the relation
\begin{equation}
2P(P(r)s)+\lambda P(rs)+\lambda P(r)s=0 \quad \text{for all } r, s\in R.
\label{RB-right}
\end{equation}
\mlabel{pp:rrbm}
\end{prop}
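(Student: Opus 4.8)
The plan is to unwind the definition of a right $\rbeta$-module in the special case where $M=R$, the module action is right multiplication, and $p=P$, and then to match the resulting identity against the Rota-Baxter axiom~(\mref{eq:rba}). First I would observe that, under these choices, Eq.~(\mref{eq:rrbm}) becomes precisely Eq.~(\mref{eq:rrbo}), namely
$$P(rP(s)) = P(r)P(s) + P(P(r)s) + \lambda P(r)s \quad \text{for all } r,s \in R,$$
which is the only condition to be checked, since $R$ is already a right $R$-module over itself and $P$ is $\bfk$-linear. Thus the proposition reduces to showing that Eq.~(\mref{eq:rrbo}) holds for all $r,s$ if and only if the relation~(\mref{RB-right}) does.

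For the forward direction I would add Eq.~(\mref{eq:rrbo}) to the Rota-Baxter axiom~(\mref{eq:rba}), written as
$$P(r)P(s) = P(rP(s)) + P(P(r)s) + \lambda P(rs).$$
The two mixed terms $P(r)P(s)$ and $P(rP(s))$ occur on opposite sides of the two identities and cancel upon addition, leaving exactly
$$0 = 2P(P(r)s) + \lambda P(rs) + \lambda P(r)s,$$
which is Eq.~(\mref{RB-right}). For the converse I would run the same computation in reverse: starting from the relation~(\mref{RB-right}) and substituting the value of $P(rP(s))$ supplied by the Rota-Baxter axiom~(\mref{eq:rba}) recovers Eq.~(\mref{eq:rrbo}), so that $(R,P)$ is a right $\rbeta$-module. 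Note that no division by $2$ is needed anywhere, so the argument is valid over an arbitrary base ring $\bfk$.

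The argument is a direct algebraic manipulation with essentially no obstacle; the only point demanding care is the bookkeeping of the roles of $r$ and $s$, together with the confirmation that the right-module axiom genuinely specializes to Eq.~(\mref{eq:rrbo}) rather than to a variant with the arguments transposed. Since every step is an equivalence of linear identities holding for all $r,s\in R$, the two directions form a single chain of equivalences, and the proof is complete once the cancellation above is recorded.
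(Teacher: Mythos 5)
Your proposal is correct and follows essentially the same route as the paper: reduce the right-module condition on $(R,P)$ over itself to Eq.~(\ref{eq:rrbo}) and then show that, under the Rota-Baxter axiom~(\ref{eq:rba}), Eq.~(\ref{eq:rrbo}) is equivalent to Eq.~(\ref{RB-right}). The paper states this equivalence in one line without computation, so your explicit cancellation argument simply supplies the details it leaves implicit.
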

\begin{proof}
This is because, under the assumption of Eq.~(\mref{eq:rba}), $P$ satisfies Eq.~(\mref{eq:rrbo}) if and only if it satisfies Eq.~(\mref{RB-right}).
\end{proof}

Applying this result, we next give an example of a Rota-Baxter algebra which is a right $(R,P)$-module, as well as a left $(R,P)$-module.

\begin{prop}
Let $R:=\bfk u_{0}\oplus\bfk u_{1}$. Equip it with the multiplication
where $u_{0}$ is the identity and $u_{1}^{2}=-\lambda u_{1}.$
Define a \bfk-linear operator
$$P: R\longrightarrow R,
\ u_{0}\longmapsto u_{1}, \ u_{1}\longmapsto -\lambda u_{1}.$$
Then $(R,P)$ is a Rota-Baxter \bfk-algebra satisfying Eq.~(\mref{RB-right}) and hence is a right $(R,P)$-module.
\mlabel{pp:rrbm}
\end{prop}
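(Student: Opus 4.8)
The plan is to treat this as a direct but structured verification, exploiting the fact that every identity in sight is $\bfk$-bilinear in its two arguments and therefore needs only be checked on the basis $\{u_0,u_1\}$ of $R$. First I would record the full multiplication table, namely $u_0u_0=u_0$, $u_0u_1=u_1u_0=u_1$ and $u_1^2=-\lambda u_1$, from which $R$ is visibly a commutative, associative, unital $\bfk$-algebra (indeed $R\cong\bfk[t]/(t^2+\lambda t)$ under $t\leftrightarrow u_1$), together with the values $P(u_0)=u_1$ and $P(u_1)=-\lambda u_1$ of the given operator extended $\bfk$-linearly. A useful structural remark to make at the outset is that both $P$ and left multiplication by $u_1$ map $R$ into the line $\bfk u_1$, so every product occurring inside the identities collapses to a scalar multiple of $u_1$; this is what keeps the bookkeeping short.

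The first task is to confirm that $(R,P)$ is genuinely a Rota-Baxter algebra, i.e.\ that $P$ satisfies Eq.~(\mref{eq:rba}). By bilinearity together with the commutativity of $R$ it suffices to check the three basis pairs $(u_0,u_0)$, $(u_0,u_1)$ and $(u_1,u_1)$, since swapping $r$ and $s$ fixes both sides. In each case both sides reduce to a single monomial in $\lambda$ times $u_1$ (respectively $-\lambda u_1$, $\lambda^2 u_1$ and $-\lambda^3 u_1$), so the axiom holds for every $\lambda\in\bfk$.

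With the Rota-Baxter structure established, the second and main task is to verify the relation Eq.~(\mref{RB-right}), that is $2P(P(r)s)+\lambda P(rs)+\lambda P(r)s=0$. Because this expression is bilinear but no longer symmetric in $(r,s)$, I would run through all four basis pairs $(u_i,u_j)$ with $i,j\in\{0,1\}$; in each the three terms reduce to scalar multiples of $u_1$ whose coefficients cancel identically in $\lambda$. Once this is in hand, the conclusion that $(R,P)$ is a right $(R,P)$-module is immediate from the characterization established just above, which states that for a Rota-Baxter algebra the right-module property over itself is equivalent to Eq.~(\mref{RB-right}).

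There is no genuine obstacle beyond careful computation. The one point requiring attention is that $\lambda$ is a free parameter, so each identity must be checked as a polynomial identity in $\lambda$ rather than for a fixed scalar, and one must be careful never to divide by $\lambda$ (which may be a zero divisor or zero). The structural observation that $P(R)\subseteq\bfk u_1$ and $u_1R\subseteq\bfk u_1$ is precisely what makes the handful of cases short enough to present cleanly.
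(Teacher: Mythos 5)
Your proposal is correct and follows essentially the same route as the paper: verify that $R$ is an algebra, check the Rota-Baxter axiom Eq.~(\ref{eq:rba}) and the relation Eq.~(\ref{RB-right}) on the basis $\{u_0,u_1\}$ using $\bfk$-bilinearity, and then invoke the preceding characterization that a Rota-Baxter algebra is a right module over itself if and only if Eq.~(\ref{RB-right}) holds. If anything, you are slightly more careful than the paper: since Eq.~(\ref{RB-right}) is not symmetric in $(r,s)$, you rightly insist on checking all four basis pairs, whereas the paper's proof displays only the three pairs $(u_0,u_0)$, $(u_0,u_1)$, $(u_1,u_1)$ and omits $(u_1,u_0)$ (which does hold, but deserves its own line).
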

\begin{proof}
The cyclic $\bfk$-module $\bfk u_1$ with $u_1^2=-\lambda u_1$ is a nonunitary $\bfk$-algebra. Then $R$ is simply the unitarization of $\bfk u_1$.

We next verify that $P$ satisfies the Rota-Baxter axiom in Eq.~(\ref{eq:rba}). Since $P$ is \bfk-linear, we only need to check it for the basis elements.

For $r=s=u_{0}$, we have
\begin{eqnarray*}
P(u_{0})P(u_{0})=u_{1}u_{1}=-\lambda u_{1},
\end{eqnarray*}
agreeing with
$$P(u_{0}P(u_{0}))+P(P(u_{0})u_{0})+\lambda P(u_{0}u_{0})
=2P(u_{1})+\lambda P(u_{0})=-2\lambda u_{1}+\lambda u_{1}=-\lambda u_{1}.
$$

For $r=u_{0}$, $s=u_{1}$, we have
$$
P(u_{0})P(u_{1})=u_{1}(-\lambda u_{1})=-\lambda u_{1}^{2}=\lambda^{2} u_{1}
$$
which agrees with
$$
P(u_{0}P(u_{1}))+P(P(u_{0})u_{1})+\lambda P(u_{0}u_{1})
=P(-\lambda u_{1})+P(u_{1}^2)+\lambda P(u_{1})
=P(u_1^2)=-\lambda P(u_{1})=\lambda^{2}u_{1}.
$$

For $r=s=u_{1}$, we have
\begin{eqnarray*}
P(u_{1})P(u_{1})=(-\lambda u_{1})(-\lambda u_{1})=\lambda^{2} u_{1}^{2}=-\lambda^{3} u_{1},
\end{eqnarray*}
agreeing with
$$
P(u_{1}P(u_{1}))+P(P(u_{1})u_{1})+\lambda P(u_{1}u_{1})
=2P(u_{1}(-\lambda u_{1}))+\lambda P(u_1^2)
=-\lambda P(u_1^2)=\lambda^2P(u_{1})=-\lambda^{3}u_{1}.
$$
Thus $(R,P)$ is a Rota-Baxter algebra of weight $\lambda$.

We finally verify that $P$ satisfies Eq.~(\ref{RB-right}).
Taking $r$ and $s$ to be the basis elements $u_0$ or $u_1$, we obtain
$$
2P(P(u_{0})u_{0})+\lambda P(u_{0}u_{0})+\lambda P(u_{0})u_{0}
=2P(u_{1})+\lambda u_{1}+\lambda u_{1}
=-2\lambda u_{1}+\lambda u_{1}+\lambda u_{1}=0.
$$
$$
2P(P(u_{0})u_{1})+\lambda P(u_{0}u_{1})+\lambda P(u_{0})u_{1}
=2P(u_{1}^{2})+\lambda P(u_{1})+\lambda u_{1}^2
=2P(-\lambda u_{1})-2\lambda^{2} u_{1}
=2\lambda^{2}u_{1}-2\lambda^{2} u_{1}=0.
$$
$$
2P(P(u_{1})u_{1})+\lambda P(u_{1}u_{1})+\lambda P(u_{1})u_{1}
=2P(-\lambda u_{1}^2)+\lambda P(-\lambda u_{1})-\lambda^2 u_{1}^2
=2\lambda^{2} P(u_{1})+2\lambda^{3} u_{1}
=0.
$$
Thus $P$ satisfies Eq.~(\ref{RB-right}).
\end{proof}

We next define Rota-Baxter bimodules.
\begin{defn}
Let $(R,P)$ and $(S,\alpha)$ be Rota-Baxter algebras. An {\bf $(R,P)$-$(S,\alpha)$-bimodule} is a triple~$(M, p_M^R, p_M^S)$ where $(M,p_M^R)$ is a left $\rbeta$-module, $(M,p_M^S)$ is a right $(S,\alpha)$-module and $M$ is an $R$-$S$-bimodule over algebras, such that
\begin{equation*}
p_M^S(rm) = r p_M^S(m),  \quad p_M^R(ms) = p_M^R(m)s, \quad
 p_M^S(p_M^R(m)) = p_M^R(p_M^S(m))
\mlabel{eq:rbbim}
\end{equation*}
for all $ m\in M, r\in R, s\in S.$
\end{defn}

In general, $(R,P)$ is not its own $(R,P)$-$(R,P)$-bimodule because being a Rota-Baxter bimodule implies that the operator $P$ is $R$-linear, but a Rota-Baxter operator is only \bfk-linear. Denote by $\bfone_{R}$ the identity of $R$. To be precise, we have

\begin{prop}
Let $(R,P)$ be a Rota-Baxter \bfk-algebra of weight $\lambda$. Then $(R,P)$ is an $(R,P)$-$(R,P)$-bimodule if $P$ is $R$-linear on both sides, and either $P(\bfone_{R})=0$ or $P(\bfone_{R})=-\lambda$. If $R$ has no zero divisors, then the converse is also true.
\mlabel{pp:rbbm}
\end{prop}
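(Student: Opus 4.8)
The plan is to view the candidate structure as the triple $(R,P,P)$, with $R$ acting on itself by left and right multiplication, and to unwind the three defining identities of an $(R,P)$-$(R,P)$-bimodule into conditions on $P$ alone. First I would record three automatic facts: that $(R,P)$ is a left $(R,P)$-module over itself, since Eq.~(\ref{eq:lrbm}) with $p=P$ and $m,r\in R$ is exactly the Rota-Baxter axiom Eq.~(\ref{eq:rba}); that $R$ is an $R$-$R$-bimodule over algebras by associativity; and that the third compatibility identity $p_M^S(p_M^R(m))=p_M^R(p_M^S(m))$ reads $P(P(m))=P(P(m))$, hence is trivial. Consequently the substance of the bimodule conditions is (a) the first two compatibility identities $P(rm)=rP(m)$ and $P(ms)=P(m)s$, i.e.\ that $P$ is $R$-linear on both sides, together with (b) the requirement that $(R,P)$ be a right $(R,P)$-module over itself, which by Proposition~\ref{pp:rrbm} amounts to Eq.~(\ref{RB-right}).

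For the forward implication I would set $c:=P(\bfone_R)$ and use two-sided $R$-linearity to write $P(r)=P(r\bfone_R)=rc$ and $P(r)=P(\bfone_R r)=cr$; comparing these shows that $c$ is central and that $P(x)=cx$ for every $x\in R$. Substituting $P(x)=cx$ into the left-hand side of Eq.~(\ref{RB-right}) and collecting terms yields $2c(c+\lambda)\,rs$. Since each of the hypotheses $c=0$ and $c=-\lambda$ gives $c(c+\lambda)=0$, Eq.~(\ref{RB-right}) holds and $(R,P)$ is a right module over itself; combined with the $R$-linearity hypothesis, which supplies the two nontrivial compatibility identities, and the automatic facts above, this establishes that $(R,P)$ is an $(R,P)$-$(R,P)$-bimodule.

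For the converse, assume $R$ has no zero divisors and that $(R,P)$ is an $(R,P)$-$(R,P)$-bimodule. The first two compatibility identities immediately give that $P$ is $R$-linear on both sides, so once more $P(x)=cx=xc$ with $c=P(\bfone_R)$ central. Instead of using Eq.~(\ref{RB-right}), I would substitute $P(x)=cx$ directly into the Rota-Baxter axiom Eq.~(\ref{eq:rba}), which holds because $(R,P)$ is a Rota-Baxter algebra: the left side becomes $c^2 rs$ while the right side becomes $2c^2 rs+\lambda c\,rs$, so $(c^2+\lambda c)\,rs=0$ for all $r,s\in R$. Taking $r=s=\bfone_R$ gives $c(c+\lambda)=0$, and the absence of zero divisors forces $c=0$ or $c=-\lambda$, i.e.\ $P(\bfone_R)\in\{0,-\lambda\}$.

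The computations are routine, and the only point needing real care is the spurious factor of $2$. Setting $r=s=\bfone_R$ in Eq.~(\ref{RB-right}) yields only $2c(c+\lambda)=0$, which is vacuous when $\bfk$ has characteristic $2$; this is precisely why in the converse I extract $c(c+\lambda)=0$ from the Rota-Baxter axiom itself, where no such doubling occurs. The remaining items, namely the identification $p_M^R=p_M^S=P$ in the triple and the centrality deduction $rc=cr$, are immediate once two-sided $R$-linearity is available.
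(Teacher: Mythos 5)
Your proof is correct and takes essentially the same route as the paper's: two-sided $R$-linearity reduces $P$ to multiplication by the central element $c=P(\bfone_{R})$, the forward direction verifies the bimodule axioms via $c(c+\lambda)=0$, and the converse extracts $c(c+\lambda)=0$ and applies the no-zero-divisor hypothesis. Your care about the factor of $2$ is in fact a slight improvement on the paper, whose converse invokes ``Eq.~(\ref{eq:rba}) or Eq.~(\ref{RB-right})'' even though the latter alone yields only $2c(c+\lambda)=0$, which is vacuous in characteristic $2$.
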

\begin{proof}
Suppose that the Rota-Baxter operator $P$ is $R$-linear and satisfies $P(\bfone_{R})=0$ or $P(\bfone_{R})=-\lambda$. Then $P$ is either the zero operator or the scalar operator $P(r)=-\lambda r$ and so $P^2(r)=\lambda^2 r$ for $r\in R$. If $P$ is the zero
operator, then everything vanishes in the conditions of a Rota-Baxter bimodule. So we are done. In the latter case, the check is also simple. For example, to check Eq.~(\ref{RB-right}), for every $r, s\in R$, we have
$$
2P(P(r)s)+\lambda P(rs)+\lambda (P(r)s)
=2rsP^2(\bfone_{R})+\lambda rsP(\bfone_{R})+\lambda rsP(\bfone_{R})
=2\lambda^2 rs-\lambda^2 rs-\lambda^2 rs=0.
$$

Conversely if $(R,P)$ is a $(R,P)$-$(R,P)$-bimodule. Then $P$ is $R$-linear by definition. Then by the Rota-Baxter axiom in Eq.~(\ref{eq:rba}) or Eq.~(\ref{RB-right}), we have $P(\bfone_{R})(P(\bfone_{R})+\lambda)=0$. Then the assumption that $R$ has no zero divisors implies $P(\bfone_{R})=0$ or $P(\bfone_{R})=-\lambda$.
\end{proof}

\subsection{Free operated modules}

We recall from~\mcite{Guop} that an {\bf operated \bfk-algebra} is a \bfk-algebra $R$ equipped with a \bfk-linear operator $\alpha:R\to R$.

\begin{defn}
Let $(R, \alpha)$ be an operated \bfk-algebra.
\begin{enumerate}
\item A {\bf left operated $R$-module} is a pair $(M,p)$ consisting of a left $R$-module $M$ and a $\bfk$-linear operator $p:M\longrightarrow M$.

\item Let $(M,p_M)$ and $(N,p_N)$ be left operated $R$-modules. A {\bf left operated $R$-module homomorphism} $f: (M,p_M) \longrightarrow (N,p_N)$
is a left $R$-module homomorphism $f:M \longrightarrow N$ such that $f\circ p_M = p_N \circ f$.
\end{enumerate}
\end{defn}

For example, left Rota-Baxter modules are left operated modules.

Following the convention made in the introduction, the tensor products are all taken over $\bfk$, unless otherwise stated. Let $(R,\alpha)$ be an operated \bfk-algebra and $X$ a set. Denote
\begin{align*}
\calm_R(X):= RX \oplus (R\ot R)X \oplus \cdots = \oplus_{n\geq 1}(R^{\otimes n}X)= (\oplus_{n\geq 1}R^{\otimes n})X,
\end{align*}
where $R^{\ot n}X=R^{\ot n}\ot \bfk X$ with $\bfk X$ being the free $\bfk$-module on $X$. The action of $R$ on the left most tensor factor of $R^{\ot n}X$ defines a left action of $R$ on $\calm_R(X)$, giving rise to a left $R$-module structure on $\calm_R(X)$. Define a \bfk-linear operator $p_X: \calm_R(X)\to \calm_R(X)$ by assigning
$$(r_1\ot \cdots \ot r_n)x\mapsto (\bfone_{R}\ot r_1\ot \cdots \ot r_n) x \quad \text{for all }  r_1, \cdots, r_n\in R, x\in X$$
for pure tensors $r_1\ot\cdots\ot r_n$ and extending by additivity.

\begin{prop} Let $(R,\alpha)$ be an operated\, \bfk-algebra and $X$ a set. Then, with the above notations,
\begin{enumerate}
\item the pair $(\calm_R(X), p_X)$ is a left operated $R$-module; \mlabel{it:om}
\item the pair $(\calm_R(X), p_X)$, together with the natural embedding $j_X: X\longrightarrow \calm_R(X)$, is the free left operated $R$-module generated by $X$.
More precisely, for any operated left $(R,\alpha)$-module $(M, q)$ and any set map $f: X\longrightarrow M$, there exists a unique
operated $R$-module homomorphism $\tilde{f}: \calm_R(X) \longrightarrow M$ such that $\tilde{f} \circ j_X = f$, that is the following diagram commutes.
$$
\xymatrix{
X\ar@{^{(}->}[rr]^{j_X} \ar^{f}[drr]&&(\calm_R(X),p_X)\ar@{-->}^{\tilde{f}}[d]&\\
&&(M,q) }
$$
\mlabel{it:fom}

\end{enumerate}
\mlabel{prop:freeo}
\end{prop}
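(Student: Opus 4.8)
The plan is to treat the two parts separately, with the first being essentially formal and the second carrying the real content. For part~(\mref{it:om}), I would first check that the prescribed left action on the leftmost tensor factor, namely $r\cdot (r_1\ot\cdots\ot r_n)x = (rr_1\ot r_2\ot\cdots\ot r_n)x$, satisfies the left $R$-module axioms; this is immediate from the associativity and unitality of multiplication in $R$ together with the bilinearity built into the tensor product. Since an operated module imposes \emph{no} compatibility condition between the module structure and the operator (in contrast to a Rota-Baxter module), and since $p_X$ is $\bfk$-linear by construction, the pair $(\calm_R(X),p_X)$ is a left operated $R$-module with nothing further to verify.

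For part~(\mref{it:fom}), the embedding is $j_X(x)=\bfone_R x\in RX$, and the key observation is the decomposition identity
\[
(r_1\ot\cdots\ot r_n)x = r_1\cdot p_X\big((r_2\ot\cdots\ot r_n)x\big),
\]
which reads off directly from the definition of $p_X$ (prepending $\bfone_R$) and the left action. I would use this first to prove \emph{uniqueness}: any operated $R$-module homomorphism $\tilde{f}$ with $\tilde{f}\circ j_X=f$ must satisfy $\tilde{f}(\bfone_R x)=f(x)$, and then $R$-linearity together with $\tilde{f}\circ p_X = q\circ \tilde{f}$ forces, by induction on $n$,
\[
\tilde{f}\big((r_1\ot\cdots\ot r_n)x\big)= r_1\, q\big(r_2\, q\big(\cdots q\big(r_n\, f(x)\big)\cdots\big)\big).
\]
Thus $\tilde{f}$ is determined on all pure tensors, hence everywhere.

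For \emph{existence}, I would take the displayed formula as the definition of $\tilde{f}$. Because the right-hand side is $\bfk$-multilinear in $r_1,\dots,r_n$ and $\bfk$-linear in $x$ (extending $f$ linearly over $\bfk X$), the universal property of the tensor product guarantees that it descends to a well-defined $\bfk$-linear map on each summand $R^{\ot n}\ot\bfk X$, and hence on $\calm_R(X)$. It then remains to verify three compatibilities on pure tensors: that $\tilde{f}$ is $R$-linear, which holds since left multiplication by $r$ merely replaces $r_1$ by $rr_1$ while leaving the nested $q(\cdots)$ untouched; that $\tilde{f}\circ p_X=q\circ \tilde{f}$, which holds because $p_X$ prepends $\bfone_R$ and $\bfone_R\cdot q(\cdots)=q(\cdots)$, turning the outermost $r_1$-application into a bare application of $q$; and that $\tilde{f}\circ j_X=f$, which is the case $n=1$ with $r_1=\bfone_R$.

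I expect no single step to be a genuine obstacle; the one point requiring care is the well-definedness of $\tilde{f}$, that is, checking that the iterated formula respects the defining relations of the tensor products $R^{\ot n}\ot \bfk X$. This is handled cleanly by invoking multilinearity and the universal property of $\ot$ rather than by a hands-on computation. The conceptual crux is the decomposition identity above, which simultaneously drives the uniqueness argument and renders the operator-compatibility check immediate.
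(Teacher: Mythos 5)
Your proposal is correct and takes essentially the same route as the paper: your closed formula $\tilde{f}\big((r_1\ot\cdots\ot r_n)x\big)= r_1\, q\big(r_2\, q\big(\cdots q\big(r_n\, f(x)\big)\cdots\big)\big)$ is exactly the unrolled form of the paper's recursive definition $\tilde{f}\big((r_1\ot \cdots \ot r_n)x\big) = r_1\, q\big(\tilde{f}((r_2\ot \cdots \ot r_n)x)\big)$, and uniqueness is deduced from the same forced conditions $\tilde{f}\circ j_X=f$ and $\tilde{f}\circ p_X=q\circ\tilde{f}$. The only difference is one of completeness, in your favor: you spell out the well-definedness of $\tilde{f}$ via multilinearity and the universal property of the tensor product, and you run the uniqueness induction explicitly, both of which the paper asserts without detail.
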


\begin{proof}
(\mref{it:om})~~ We only need to verify that $p_{X}$ is $\bfk$-linear which follows from the \bfk-linearity of the tensor product:
$$\bfone_{R} \ot (kr_1)\ot \cdots \ot r_n = k \bfone_{R} \ot r_1\ot \cdots \ot r_n \quad \text{for all } k\in \bfk, r_1, \cdots, r_n\in R, n\geq 1.$$

\noindent
(\mref{it:fom})~~
We define $\tilde{f}: \calm_R(X) \to M$ by defining $\tilde{f}((r_1\ot \cdots \ot r_n)x)$ for $(r_1\ot \cdots \ot r_n)x\in R^{\ot n}X$ recursively on $n$. For the initial step of
$n=1$, we define
\begin{equation}
\tilde{f}(r_1 x) = r_1 \bar{f} (x) = r_1 (\tilde{f}\circ j_X) (x)= r_1 f(x). \mlabel{eq:fomho}
\end{equation}
For the induction step, we define
$$\bar{f}((r_1\ot \cdots \ot r_n)x) = r_1 q (\tilde{f}((r_2\ot \cdots \ot r_n)x)).$$
By construction, $\tilde{f}$ is a left $R$-module homomorphism. Note that this is also the only way to define $\tilde{f}$ under the conditions $\tilde{f} \circ j_X = f$ and $q\circ \tilde{f}=\tilde{f}\circ p_X$, proving the desired uniqueness of $\tilde{f}$ for the universal property.
\end{proof}

\subsection{Free Rota-Baxter module}

We now apply free operated modules to construct free Rota-Baxter modules.
\begin{defn}
Let $(R,P)$ be a Rota-Baxter \bfk-algebra of weight $\lambda$ and $X$ a set.
A {\bf free left $\rbeta$-module} on $X$ is a left $\rbeta$-module $(F(X),p)$ together with a map $j_X: X\longrightarrow F(X)$
satisfying the following universal property: for any left $\rbeta$--module $(M,q)$ and any set map $f: X\longrightarrow M$, there exists a unique
left $\rbeta$-module homomorphism $\widetilde{f}: F(X) \longrightarrow M$ such that $\widetilde{f} \circ j_X= f$.
\end{defn}

Now we construct free left $\rbeta$-modules.
Let $I_X$ denote the left operated submodule of $\calm_R(X)$ generated by the subset
$$
\{ P(r)p_X(y) - p_X(P(r)y) - p_X(rp_X(y)) - \lambda p_X(ry)~|~r\in R, y\in \calm_R(X)\}.
$$
Define $\calm_R(X)/I_X$ to be the quotient operated module of $\calm_R(X)$ by $I_X$ and define
$$p: \calm_R(X)/I_X \longrightarrow \calm_R(X)/I_X,\ y+ I_X \longmapsto p_X(y)+I_X$$
to be the operator on the quotient $\calm_R(X)/I_X$ induced by $p_X$. Then $(\calm_R(X)/I_X, p)$ is a left Rota-Baxter $\rbeta$-module.

\begin{theorem}
Let $(R,P)$ be a Rota-Baxter \bfk-algebra of weight $\lambda$ and $X$ a set.
Then $(\calm_R(X)/I_X, p)$ with the natural map $j:= \pi\circ j_X: X\longrightarrow \calm_R(X) \longrightarrow \calm_R(X)/I_X $ is the free left $\rbeta$-module.
\mlabel{thm:freem}
\end{theorem}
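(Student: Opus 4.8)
The plan is to reduce the universal property for the free left $\rbeta$-module to the universal property for the free operated $R$-module established in Proposition~\mref{prop:freeo}, using the quotient by $I_X$ to impose exactly the Rota-Baxter relation. First I would confirm that $(\calm_R(X)/I_X, p)$ is genuinely a left $\rbeta$-module. Since $I_X$ is by definition a left operated submodule, it is an $R$-submodule closed under $p_X$, so the induced operator $p$ on the quotient is well-defined. Moreover, for each generator the element $P(r)p_X(y) - p_X(P(r)y) - p_X(rp_X(y)) - \lambda p_X(ry)$ lies in $I_X$; because every coset has the form $y + I_X$ with $y\in\calm_R(X)$, passing to cosets shows that $p$ satisfies Eq.~(\mref{eq:lrbm}), so the quotient is indeed a Rota-Baxter module.

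Next, given a left $\rbeta$-module $(M,q)$ and a set map $f: X \to M$, I would invoke Proposition~\mref{prop:freeo}(\mref{it:fom}) to obtain the unique operated $R$-module homomorphism $\bar{f}: \calm_R(X) \to M$ with $\bar{f}\circ j_X = f$. The crucial step is to show $\bar{f}(I_X) = 0$. It suffices to check that $\bar{f}$ annihilates each generator of $I_X$, because $\ker\bar{f}$ is itself an operated submodule: it is an $R$-submodule, and it is closed under $p_X$ since the intertwining relation $\bar{f}\circ p_X = q\circ\bar{f}$ forces $\bar{f}(p_X(z)) = q(\bar{f}(z)) = 0$ whenever $\bar{f}(z)=0$. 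Hence once $\ker\bar{f}$ contains the generators it contains the whole operated submodule $I_X$ that they generate. Applying $\bar{f}$ to a generator and using that $\bar{f}$ is $R$-linear and intertwines $p_X$ with $q$, one obtains, with $m := \bar{f}(y)$,
\[
P(r)q(m) - q(P(r)m) - q(rq(m)) - \lambda q(rm),
\]
which vanishes precisely because $(M,q)$ satisfies the Rota-Baxter module axiom Eq.~(\mref{eq:lrbm}).

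Consequently $\bar{f}$ factors uniquely through the projection $\pi: \calm_R(X) \to \calm_R(X)/I_X$, yielding an $R$-module homomorphism $\tilde{f}: \calm_R(X)/I_X \to M$ with $\tilde{f}\circ\pi = \bar{f}$. I would then verify that $\tilde{f}$ is a Rota-Baxter module homomorphism by checking $\tilde{f}\circ p = q\circ\tilde{f}$ on cosets, which follows directly from $\bar{f}\circ p_X = q\circ\bar{f}$, and that $\tilde{f}\circ j = \tilde{f}\circ\pi\circ j_X = \bar{f}\circ j_X = f$. For uniqueness, if $g$ is another $\rbeta$-module homomorphism with $g\circ j = f$, then $g\circ\pi$ is an operated $R$-module homomorphism agreeing with $\bar{f}$ on $X$, so $g\circ\pi = \bar{f} = \tilde{f}\circ\pi$ by the uniqueness clause of Proposition~\mref{prop:freeo}; since $\pi$ is surjective, $g = \tilde{f}$.

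I expect the only genuinely delicate point to be the verification that $\bar{f}$ kills all of $I_X$ rather than merely its generators. This is handled cleanly by the observation that the kernel of an operated homomorphism is itself an operated submodule, so no separate induction over the elements of the generated submodule is required. Everything else is routine diagram-chasing layered on top of Proposition~\mref{prop:freeo}.
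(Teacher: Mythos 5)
Your proposal is correct and follows essentially the same route as the paper's proof: lift $f$ to the free operated module $(\calm_R(X),p_X)$ via Proposition~\ref{prop:freeo}, check that the lift annihilates the generators of $I_X$ using Eq.~(\ref{eq:lrbm}) in $(M,q)$, and then factor through the quotient $\pi$. The only difference is that you spell out two points the paper leaves implicit---that the kernel of an operated homomorphism is itself an operated submodule (so vanishing on generators suffices), and the surjectivity-of-$\pi$ argument for uniqueness---which is a fleshing-out of the same argument rather than a different approach.
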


\begin{proof}
Let $(M,q)$ be a left $\rbeta$-module and $f: X\longrightarrow M$ a set map.
From Proposition~\mref{prop:freeo}, there is a unique left operated $R$-module homomorphism
$\tilde{f}: (\calm_R(X),p_X) \longrightarrow (M, q)$ such that $\tilde{f} \circ j_X = f,$
as showing in the following diagram:
$$
\xymatrix{
X\ar^{j_X}[rr] \ar_{f}[drr] &&(\calm_R(X), p_X)\ar^{\pi}[rr]  \ar^{\tilde{f}}[d]
&& (\calm_R(X)/I_X, p)\ar@{-->}^{\bar{f}}[dll] \\
&&(M,q)&&
}
$$
Now we show that $\tilde{f}$ vanishes on the generators of $I_X$. Indeed, let $r\in R$ and $y\in \calm_R(X)$. Then
\begin{align*}
& \tilde{f}(P(r)p_X(y) - p_X(P(r)y) - p_X(rp_X(y)) - \lambda p_X(ry))\\
=& P(r)q(\tilde{f}(y)) - q(P(r)\tilde{f}(y))  - q(rq(\tilde{f}(y))) - \lambda q(r\tilde{f}(y))\\
=&0.
\end{align*}
So $\tilde{f}$ induces a unique left $\rbeta$-module homomorphism
$$\bar{f}:(\calm_R(X)/I_X,p) \longrightarrow (M,q)$$
such that $\bar{f}\circ \pi = \tilde{f}$. Thus
$$\bar{f}\circ j = \bar{f}\circ \pi\circ j_X = \tilde{f}\circ j_X = f,$$
as required. The uniqueness of $\bar{f}$ follows from the uniqueness of $\tilde{f}$ and the uniqueness of its induced map on the quotient $\calm_R(X)/I_X$.
\end{proof}

As in the case of modules, we obtain
\begin{coro}
\begin{enumerate}
\item Every left Rota-Baxter module is the quotient of a free left Rota-Baxter module.  \mlabel{it:quof}

\item Every finitely generated left Rota-Baxter module is the quotient of a finitely generated free left Rota-Baxter module. \mlabel{it:quff}
\end{enumerate}
\mlabel{coro:quof}
\end{coro}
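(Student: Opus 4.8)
The plan is to deduce both statements directly from the existence of free left $\rbeta$-modules established in Theorem~\mref{thm:freem}, together with the fact that the kernel of a left $\rbeta$-module homomorphism is itself a left $\rbeta$-submodule, so that quotients can be formed via Lemma~\mref{lem:quotmod}. The essential content of the corollary is the construction of a \emph{surjective} left $\rbeta$-module homomorphism from a free module onto the given module; everything else is the routine translation of ``surjection'' into ``quotient''.

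For part~(\mref{it:quof}), let $(M,q)$ be an arbitrary left $\rbeta$-module. I would take $X:=M$ as the underlying set and let $f:X\longrightarrow M$ be the identity map of sets. By Theorem~\mref{thm:freem} the universal property produces a left $\rbeta$-module homomorphism $\bar{f}:\calm_R(X)/I_X\longrightarrow M$ with $\bar{f}\circ j=f$. Since $f$ is the identity, every $m\in M$ equals $\bar{f}(j(m))$, so $\bar{f}$ is surjective. To exhibit $M$ as a quotient I would check that $K:=\ker\bar{f}$ is a left $\rbeta$-submodule of $(\calm_R(X)/I_X,p)$: it is an $R$-submodule by the usual argument, and it is stable under $p$ because $\bar{f}(p(n))=q(\bar{f}(n))=0$ whenever $n\in K$. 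Lemma~\mref{lem:quotmod} then makes the quotient into a left $\rbeta$-module, and the map induced by $\bar f$ gives an isomorphism $(\calm_R(X)/I_X)/K\cong M$ of left $\rbeta$-modules, presenting $M$ as a quotient of the free left $\rbeta$-module on $X$.

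For part~(\mref{it:quff}), suppose $M$ is finitely generated as a left $\rbeta$-module, say by a finite set $X\subseteq M$; here ``generated'' means that the smallest left $\rbeta$-submodule of $M$ containing $X$ is $M$ itself. Taking $f:X\hookrightarrow M$ to be the inclusion, the same universal property yields $\bar{f}:\calm_R(X)/I_X\longrightarrow M$ with $\bar{f}\circ j=f$. Now $\im\bar{f}$ is a left $\rbeta$-submodule of $M$ containing $f(X)=X$, hence $\im\bar{f}=M$ by the choice of $X$, so $\bar{f}$ is again surjective. Since $X$ is finite, $\calm_R(X)/I_X$ is a finitely generated free left $\rbeta$-module, and the argument of part~(\mref{it:quof}) now exhibits $M$ as its quotient.

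These steps are routine once Theorem~\mref{thm:freem} is available; the only points that require care are verifying that $\ker\bar{f}$ is closed under the operator $p$, so that the quotient is taken in the Rota-Baxter category rather than merely over the underlying ring, and, for part~(\mref{it:quff}), fixing the correct meaning of ``finitely generated'' for a Rota-Baxter module so that $\im\bar f$ being a $\rbeta$-submodule containing the chosen generators genuinely forces surjectivity. I do not anticipate any serious obstacle beyond this bookkeeping.
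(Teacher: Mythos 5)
Your proposal is correct and follows exactly the standard argument that the paper invokes implicitly (the corollary is stated with ``As in the case of modules, we obtain'' and no written proof): map the free left $\rbeta$-module on $X=M$ (resp.\ on a finite generating set) onto $M$ via the universal property of Theorem~\mref{thm:freem}, note that the kernel is stable under $p$ so Lemma~\mref{lem:quotmod} applies, and conclude that $M$ is the corresponding quotient. The two points you flag --- closure of $\ker\bar f$ under $p$, and reading ``finitely generated'' as generation in the Rota--Baxter sense so that $\im\bar f$ being a $\rbeta$-submodule containing the generators forces surjectivity --- are precisely the right bookkeeping, and your handling of them is sound.
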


\subsection{Free modules as free Rota-Baxter modules}
As noted in the introduction, a Rota-Baxter algebra $(R,P)$ in general is not free as a Rota-Baxter module over itself. We now make this precise. We show that a Rota-Baxter algebra is a free Rota-Baxter module in a more restricted sense. More generally, we investigate how a free $R$-module behaves like a free Rota-Baxter module.

Let $\rbeta$ be a Rota-Baxter algebra of weight $\lambda$ and $X$ a set. For a left $(R,P)$-module $(M,q)$, set
$$
MC(M):=\{m\in M \mid q(rm)=P(r)m\, \text{ for any } r\in R\},
$$
called the set of {\bf module constants} of $M$ since $m\in MC(M)$ behaves like a constant which can be taken out of the operator. Let $\rf(X)$ be the free left $R$-module
generated by $X$:
$$\rf(X):=\left\{\sum_{x\in X}r_{x}x~\big|~r_{x}\in R\right\} .$$
Define
$$\tilde{p}: \rf(X) \longrightarrow \rf(X),\ \sum_{x\in X}r_{x}x \longmapsto \sum_{x\in X}P(r_{x})x.$$

\begin{theorem} Let $\rbeta$ be a Rota-Baxter algebra of weight $\lambda$ and $X$ a set. Then
\begin{enumerate}
\item the pair $(\rf(X),\tilde{p})$ is a left $(R, P)$-module. \mlabel{it:rrbm}

\item  the pair $(\rf(X),\tilde{p})$, together with the natural embedding map $\iota: X\longrightarrow (\rf(X), \tilde{p})$,
is the {\bf restricted free left $(R,P)$-module generated by $X$} in the sense that,
for any left $\rbeta$-module $(M,q)$ and any
set map $f: X\longrightarrow (M,q)$ with $\im f \subseteq MC(M)$, there exists a unique left Rota-Baxter homomorphism
$\bar{f}: (\rf(X),\tilde{p})\longrightarrow (M,q)$ such that $f=\bar{f}\circ \iota$.
\mlabel{it:frrbm}
\end{enumerate}
\mlabel{thm:fmrbm}
\end{theorem}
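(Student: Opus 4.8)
The plan is to treat the two parts in sequence. Part (1) reduces to the Rota-Baxter axiom applied coordinatewise, while part (2) turns entirely on the module-constant hypothesis $\im f \subseteq MC(M)$, with existence and uniqueness of $\bar{f}$ inherited from the ordinary free $R$-module $\rf(X)$.

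For part (1), note that $\rf(X)$ is already a left $R$-module and that $\tilde{p}$ is $\bfk$-linear, inheriting $\bfk$-linearity directly from $P$. Hence the only thing to check is the left Rota-Baxter module identity \eqref{eq:lrbm}, and by $\bfk$-linearity it suffices to verify it on generators $m = sx$ with $s\in R$ and $x\in X$. I would compute the left-hand side as $P(r)\tilde{p}(sx) = \bigl(P(r)P(s)\bigr)x$ and expand $P(r)P(s)$ using the Rota-Baxter axiom \eqref{eq:rba}. The three resulting terms $P(rP(s))x$, $P(P(r)s)x$, and $\lambda P(rs)x$ match precisely $\tilde{p}\bigl(r\tilde{p}(sx)\bigr)$, $\tilde{p}\bigl(P(r)sx\bigr)$, and $\lambda\,\tilde{p}(rsx)$. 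Thus the module identity collapses termwise to \eqref{eq:rba}, and no extra input is needed.

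For part (2), I would first invoke the universal property of $\rf(X)$ as a free $R$-module: the set map $f\colon X\to M$ extends uniquely to an $R$-module homomorphism $\bar{f}\colon \rf(X)\to M$ with $\bar{f}\bigl(\sum_{x\in X} r_x x\bigr) = \sum_{x\in X} r_x f(x)$. This already delivers both existence and uniqueness of $\bar{f}$ as an $R$-linear map satisfying $\bar{f}\circ\iota = f$, so uniqueness as a Rota-Baxter homomorphism is automatic once compatibility with the operators is established. That compatibility, $\bar{f}\circ\tilde{p} = q\circ\bar{f}$, is the only remaining point: evaluated on $\sum_{x\in X} r_x x$, the left side gives $\sum_{x\in X} P(r_x) f(x)$ while the right side gives $\sum_{x\in X} q\bigl(r_x f(x)\bigr)$.

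This is exactly where the hypothesis $\im f \subseteq MC(M)$ enters, and I expect it to be the conceptual heart of the argument rather than a technical convenience. Because each $f(x)$ is a module constant, we have $q\bigl(r_x f(x)\bigr) = P(r_x) f(x)$, so the two displays agree term by term. The restriction to module constants is precisely the condition under which the free $R$-module extension respects the operators; without it, an $R$-linear extension of an arbitrary $f$ would in general fail to commute with $\tilde{p}$ and $q$, which is why the universal property holds only in the restricted sense. I would conclude by observing that $\iota(x) = \bfone_{R}\,x$ itself satisfies $\tilde{p}(r\,\iota(x)) = P(r)\,\iota(x)$, so $\iota(X)\subseteq MC(\rf(X))$, confirming that the generators land among module constants and that the formulation is internally consistent.
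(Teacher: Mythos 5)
Your proposal is correct and follows essentially the same route as the paper: part (1) is verified termwise on generators $r_x x$ by expanding $P(r)P(r_x)$ via the Rota--Baxter axiom~\eqref{eq:rba}, and part (2) extends $f$ by the free $R$-module universal property and uses $\im f \subseteq MC(M)$ exactly at the step $q(r_x f(x)) = P(r_x) f(x)$ to get $\bar{f}\circ\tilde{p} = q\circ\bar{f}$, with uniqueness inherited from $R$-linearity. Your closing observation that $\iota(X)\subseteq MC(\rf(X))$ is a nice consistency check not present in the paper, but it does not change the argument.
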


\begin{proof}
(\mref{it:rrbm}) It is sufficient to show that $\tilde{p}$ satisfies Eq.~(\mref{eq:lrbm}).
For any $r\in R$ and $r_x x\in F(X)$, we have
\begin{align*}
 P(r)\tilde{p}( r_{x}x) &= P(r)( P(r_{x})x)\\
 &= P(r)P(r_{x})x \\
 &=  P(rP(r_{x}))x + P(P(r)r_{x})x + \lambda P(rr_{x})x \\
 &=  \tilde{p}(rP(r_{x})x) + \tilde{p}(P(r)r_{x}x) + \lambda \tilde{p}(rr_{x}x) \\
 &= \tilde{p}(r \tilde{p}( r_{x}x)) + \tilde{p}(P(r)( r_{x}x)) + \lambda \tilde{p}(r( r_{x}x)),
\end{align*}
as required.
\smallskip

\noindent
(\mref{it:frrbm}) By the universal property of $\rf(X)$ as the free left $R$-module over $X$, there is a left $R$-module homomorphism
\begin{equation}
\bar{f}:(\rf(X), \tilde{p})\longrightarrow (M,q), \ \sum_{x\in X}r_{x}x \longmapsto \sum_{x\in X}r_{x}f(x).
\mlabel{eq:alghom1}
\end{equation}
Furthermore,
\begin{align*}
(\bar{f}\circ \tilde{p}) (r_{x}x)  = & \bar{f} (\tilde{p}( r_{x}x)) = \bar{f} ( P(r_{x})x)= P(r_{x})f(x)=q(r_{x}f(x)) = q(\bar{f}( r_{x}x)) = (q\circ \bar{f})( r_{x}x),
\end{align*}
where the third step follows from Eq.~(\mref{eq:alghom1}) and the fourth step from $\im f \subseteq MC(M)$. Thus $\bar{f} \circ \tilde{p} = q\circ \bar{f}$ and so $\bar{f}$
is the desired left $\rbeta$-module homomorphism.

By the definition of $\bar{f}$, we have
$$\bar{f}(r_{x}x) =   r_x f(x) =   r_x ((\bar{f}\circ\iota) (x)) =   r_x f(x).$$
So $\bar{f}$ is uniquely determined by $f$.
\end{proof}

We end this section with a condition of $MC(R)=R$ for a Rota-Baxter algebra $(R,P)$.

\begin{prop}
Let $(R,P)$ be a Rota-Baxter algebra of weight $\lambda$. If $R$ has no zero divisors and $MC(R)=R$, then $P$ is right $R$-linear and either $P(\bfone_{R})=0$ or $P(\bfone_{R})=-\lambda$
\end{prop}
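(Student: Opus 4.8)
The plan is to unwind the hypothesis $MC(R)=R$ directly and then feed the resulting rigidity of $P$ into the Rota-Baxter axiom. First I would specialize the definition of module constants to the case $M=R$ with the Rota-Baxter algebra acting on itself, where the module operator $q$ is $P$ itself. Then
$$MC(R)=\{m\in R \mid P(rm)=P(r)m \text{ for all } r\in R\},$$
so the assumption $MC(R)=R$ is exactly the statement that $P(rm)=P(r)m$ for all $r,m\in R$. Renaming the variables, this is precisely the right $R$-linearity of $P$, which settles the first assertion with no further hypotheses needed; in particular, the no-zero-divisor condition is not used here.

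The key observation for the second assertion is that right $R$-linearity, together with the presence of the identity $\bfone_{R}$, forces $P$ to be right multiplication by the single element $c:=P(\bfone_{R})$. Indeed, setting $r=\bfone_{R}$ in $P(rm)=P(r)m$ yields $P(m)=P(\bfone_{R})m=cm$ for every $m\in R$. I would then substitute this description of $P$ into the Rota-Baxter axiom Eq.~(\mref{eq:rba}), being careful to preserve the order of factors since $R$ need not be commutative. Expressing each of the four terms via $P(m)=cm$, the term $P(rP(s))=crcs$ cancels the left-hand side $P(r)P(s)=crcs$, and what survives is
$$0=P(P(r)s)+\lambda P(rs)=c^{2}rs+\lambda c\,rs=(c^{2}+\lambda c)\,rs \quad\text{for all } r,s\in R.$$

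Finally I would specialize to $r=s=\bfone_{R}$ to obtain $c^{2}+\lambda c=0$, that is, $P(\bfone_{R})\bigl(P(\bfone_{R})+\lambda \bfone_{R}\bigr)=0$ in $R$. It is exactly at this last step that the no-zero-divisor hypothesis enters: it forces one of the two factors to vanish, giving $P(\bfone_{R})=0$ or $P(\bfone_{R})=-\lambda \bfone_{R}$, as claimed. I do not anticipate any serious obstacle, since the whole argument is a chain of definitional substitutions; the only point requiring care is the bookkeeping of the noncommutative products when inserting $P(m)=cm$ into the four-term identity. One could alternatively avoid passing through $P(m)=cm$ and derive $P^{2}(r)+\lambda P(r)=0$ directly from the axiom using right $R$-linearity, but the multiplication description seems the most transparent route.
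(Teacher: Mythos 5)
Your proposal is correct and follows essentially the same route as the paper: unwind $MC(R)=R$ to get $P(m)=P(\bfone_{R})m$, substitute into the Rota-Baxter axiom to obtain $P(\bfone_{R})\bigl(P(\bfone_{R})+\lambda\bigr)=0$, and invoke the absence of zero divisors only at this final step. The only cosmetic difference is that the paper specializes $r=s=\bfone_{R}$ in the axiom first and then uses $P^{2}(\bfone_{R})=P(\bfone_{R})^{2}$, whereas you substitute $P(m)=cm$ for general $r,s$ and specialize afterwards; the computations are identical.
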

We note that this condition is different from the condition for a Rota-Baxter algebra to be a Rota-Baxter bimodule over itself.

\begin{proof}
From $MC(R)=R$ we have $P(r)=P(\bfone_{R})r$ for all $r\in R$. Then from Eq.~(\ref{eq:rba}) we obtain
$$ P(\bfone_{R})^2=2P^2(\bfone_{R})+\lambda P(\bfone_{R})=2P(\bfone_{R})^2+\lambda P(\bfone_{R}).$$
Thus $P(\bfone_{R})(P(\bfone_{R})+\lambda)=0$ and the conclusion follows.
\end{proof}

\section{Projective and injective resolutions of Rota-Baxter modules}
\mlabel{sec:pi}
We now turn our attention to the Hom functor, the projectivity and the injectivity of Rota-Baxter modules.

\subsection{The Hom functor}
Let ${\bf Ab}$ be the category of abelian groups. Recall that ${\bf _{\rbeta}Mod}$ is the category of left $(R,P)$-modules. If $(M,p_{M})$ and $(N,p_{N})$ are objects of ${\bf _{\rbeta}Mod}$, the set of all the homomorphisms of $\rbeta$-modules from $(M,p_{M})$ to $(N,p_{N})$ will be denoted by $\rhom_{\rbeta}(M,N)$. Thus $\rhom_{\rbeta}(M,N)$ is a subset of $\rhom_R(M,N)$.

\begin{prop}\mlabel{pp:rbab}
Let $(R, P)$ be a Rota-Baxter \bfk-algebra of weight $\lambda$ and $(M,p_M), (N,p_N) \in {\bf _{\rbeta}Mod}$. Then $\rhom_{\rbeta}(M, N)$ is an abelian subgroup of $\rhom_R(M,N)$.
\end{prop}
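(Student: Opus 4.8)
The plan is to use the standard subgroup criterion. First I would recall that $\rhom_R(M,N)$, the set of underlying $R$-module homomorphisms, is already an abelian group under pointwise addition: the pointwise sum and the pointwise negative of $R$-linear maps are again $R$-linear, because $N$ is an abelian group and the $R$-action distributes over addition. Since by definition every element of $\rhom_{\rbeta}(M,N)$ is in particular an $R$-module homomorphism, we have the inclusion $\rhom_{\rbeta}(M,N)\subseteq \rhom_R(M,N)$, and it therefore suffices to show that this subset is nonempty and closed under subtraction.

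Next I would verify nonemptiness by exhibiting the zero homomorphism $0\colon M\longrightarrow N$: it is $R$-linear and it trivially satisfies the intertwining condition $0\circ p_M = 0 = p_N\circ 0$, so it lies in $\rhom_{\rbeta}(M,N)$. For closure under subtraction, take $\phi,\psi\in \rhom_{\rbeta}(M,N)$. Their difference $\phi-\psi$ is automatically $R$-linear, since $\rhom_R(M,N)$ is a group, so the only thing left to check is that $\phi-\psi$ still commutes with the operators. For each $m\in M$ I would compute
\begin{align*}
(\phi-\psi)(p_M(m)) &= \phi(p_M(m))-\psi(p_M(m)) \\
&= p_N(\phi(m))-p_N(\psi(m)) \\
&= p_N(\phi(m)-\psi(m)) \\
&= p_N\big((\phi-\psi)(m)\big),
\end{align*}
where the second equality uses that $\phi$ and $\psi$ individually satisfy the intertwining condition and the third uses the additivity of $p_N$, which holds because $p_N$ is $\bfk$-linear. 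This gives $(\phi-\psi)\circ p_M = p_N\circ(\phi-\psi)$, hence $\phi-\psi\in \rhom_{\rbeta}(M,N)$, completing the subgroup check.

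There is no genuine obstacle here; the statement is a routine verification. The only step that uses structure beyond set-theoretic bookkeeping is the additivity of the operator $p_N$ in the third line of the display, which is precisely what lets the intertwining condition survive subtraction. Finally, commutativity of $\rhom_{\rbeta}(M,N)$ is inherited for free from the commutativity of the ambient group $\rhom_R(M,N)$, so no separate argument is needed for it.
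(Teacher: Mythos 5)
Your proof is correct and follows essentially the same route as the paper's: both verify the subgroup criterion by checking that the zero map intertwines the operators and that the intertwining condition $\phi\circ p_M = p_N\circ\phi$ is preserved under the group operations, using the additivity of $p_N$. The only cosmetic difference is that you use the one-step criterion (closure under subtraction) while the paper checks closure under addition and negation separately; the substance is identical.
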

Thus ${\bf _{\rbeta}Mod}$ is an abelian category.
\begin{proof}
First, the zero element of $\rhom_R(M,N)$ is in $\rhom_{\rbeta}(M,N)$. Next let $f,g\in \rhom_{\rbeta}(M,N)$. The inclusion $\rhom_{\rbeta}(M,N)\subseteq \rhom_R(M,N)$ shows that $f+g$ and $-f$  are $R$-linear.
Further, $f\circ p_M=p_N\circ f$ and $g\circ p_M=p_N\circ g$ give
$$ (f+g) \circ p_M = p_N \circ (f+g), \quad ((-f) \circ p_M)(m)
=( p_N\circ(-f) )(m).
$$
Thus $f+g$ and $-f$ are in $\rhom_{\rbeta}(M, N)$.
Therefore $\rhom_{\rbeta}(M, N)$ is a sub-abelian group of $\rhom_R(M,N)$.
\end{proof}

The following are more generalizations of properties of modules to the category of Rota-Baxter modules. For simplicity, we suppress the adjective left (resp. right and bi-) from a left (resp. right or bi-) Rota-Baxter module when its meaning is clear from the context.
\begin{prop}\label{Hom-module}
Let $(R,P)$, $(S,\alpha)$ and $(T,\gamma)$ be Rota-Baxter algebras.
\begin{enumerate}

\item If $(M_{\rbeta}, p_M^R)$ and $( _{\tgamma}N_{\rbeta}, p_N^T, p_N^R)$ are Rota-Baxter modules, then
 $(\Hom_{\rbeta}(M,N),q)$
is a left $\tgamma$-module with $q$ defined by
$$q(f)(m):=p_N^T(f(m)), \quad f\in \Hom_{\rbeta}(M,N), \quad m\in M.$$ \mlabel{it:bihom1}

\item If $(_{\rbeta} M,p_M^R)$ and $(_{\rbeta} N_{\tgamma}, p_N^R,p_N^T)$ are Rota-Baxter modules, then $(\Hom_{\rbeta}(M,N), q)$
is a right $\tgamma$-module with $q$ defined by
$$q(f)(m): =p_N^T(f(m)), \quad f\in \Hom_{\rbeta}(M, N), \quad m\in M.$$ \mlabel{it:bihom4}

\item If $(_{\rbeta} M_{\salpha},p_M^R, p^S_M)$ and $(_{\rbeta} N,p_N^R)$ are Rota-Baxter modules, then
 $(\Hom_{\rbeta}(M,N),q)$
is a left $\salpha$-module with $q$ defined by
$$q(f)(m):=f(p_M^S(m)), \quad f\in \Hom_{\rbeta}(M,N), \quad m\in M.$$ \mlabel{it:bihom3}

\item  If $(_{\salpha}M_{\rbeta},p_M^S,p_M^R)$ and $(N_{\rbeta},p_N^R)$ are Rota-Baxter modules, then
 $(\Hom_{\rbeta}(M,N),q)$
is a right $\salpha$-module with $q$ defined by
$$q(f)(m):=f(p_M^S(m)), \quad f\in \Hom_{\rbeta}(M, N), \quad m\in M.$$ \mlabel{it:bihom2}
\end{enumerate}
\mlabel{Hom-rbm}
\mlabel{prop:bihom}
\end{prop}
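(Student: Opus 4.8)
The plan is to dispatch all four parts with a single template, since they differ only in where the auxiliary Rota-Baxter algebra acts and in the left/right conventions. In parts~(\mref{it:bihom1}) and~(\mref{it:bihom4}) the extra algebra $\tgamma$ acts on the \emph{target} $N$, and the operator on $\Hom_{\rbeta}(M,N)$ is post-composition, $q(f)=p_N^T\circ f$; in parts~(\mref{it:bihom3}) and~(\mref{it:bihom2}) the extra algebra $\salpha$ acts on the \emph{source} $M$, and the operator is pre-composition, $q(f)=f\circ p_M^S$. For each case I will make three checks: (a) the prescribed $\tgamma$- or $\salpha$-action endows $\Hom_{\rbeta}(M,N)$ with an ordinary (one-sided) module structure; (b) the operator $q$ carries $\Hom_{\rbeta}(M,N)$ into itself; and (c) $q$ satisfies the appropriate Rota-Baxter module identity, Eq.~(\mref{eq:lrbm}) in the left cases~(\mref{it:bihom1}),(\mref{it:bihom3}) and Eq.~(\mref{eq:rrbm}) in the right cases~(\mref{it:bihom4}),(\mref{it:bihom2}).

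For~(a), the ambient group $\Hom_{\rbeta}(M,N)$ is an abelian subgroup of $\Hom_R(M,N)$ by the argument of Proposition~\mref{pp:rbab}, transcribed to right modules where needed. The one-sided action is the standard induced action, e.g.\ $(tf)(m):=t\,f(m)$ in part~(\mref{it:bihom1}) and $(sf)(m):=f(ms)$ in part~(\mref{it:bihom3}); that $tf$ (resp.\ $sf$) is again $R$-linear is exactly the statement that the two one-sided actions on the bimodule $N$ (resp.\ $M$) commute, which holds by the bimodule hypothesis. For~(b) I must verify that $q(f)$ is still $R$-linear and still commutes with the Rota-Baxter operators. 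In part~(\mref{it:bihom1}), $R$-linearity of $q(f)(m)=p_N^T(f(m))$ uses $p_N^T(nr)=p_N^T(n)r$, and the commutation $q(f)\circ p_M^R=p_N^R\circ q(f)$ uses $p_N^R\circ p_N^T=p_N^T\circ p_N^R$; both are among the three defining identities of a Rota-Baxter bimodule, and the parallel identities on $M$ serve parts~(\mref{it:bihom3}),(\mref{it:bihom2}). The $\bfk$-linearity of $q$ is immediate from that of $p_N^T$ or $p_M^S$.

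Step~(c) is the crux, and it reduces cleanly. For part~(\mref{it:bihom1}) I evaluate the left identity Eq.~(\mref{eq:lrbm}) for the candidate $\tgamma$-module at an arbitrary $m\in M$, abbreviate $n:=f(m)\in N$, and expand the three actions; the identity collapses to
\[
\gamma(t)\,p_N^T(n)=p_N^T(\gamma(t)n)+p_N^T\big(t\,p_N^T(n)\big)+\lambda\,p_N^T(tn),
\]
which is precisely Eq.~(\mref{eq:lrbm}) for the left $\tgamma$-module $(N,p_N^T)$ and hence holds. Part~(\mref{it:bihom4}) reduces in the same way to Eq.~(\mref{eq:rrbm}) for $(N,p_N^T)$, while parts~(\mref{it:bihom3}) and~(\mref{it:bihom2}), where $q$ acts through $p_M^S$ on the source, reduce after the analogous bookkeeping to Eq.~(\mref{eq:lrbm}) and Eq.~(\mref{eq:rrbm}) for the $\salpha$-module structure on $M$.

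The one genuine obstacle is organizational rather than computational: the right Rota-Baxter module axiom Eq.~(\mref{eq:rrbm}) is \emph{not} the formal mirror of the left axiom Eq.~(\mref{eq:lrbm}), so in parts~(\mref{it:bihom4}) and~(\mref{it:bihom2}) one must be careful that the induced $q$ is forced to be a right-module operator and that the reduction in step~(c) lands on Eq.~(\mref{eq:rrbm}) rather than a spurious left identity. Correctly pairing the six bimodule relations on $M$ and $N$ with the four left/right combinations is where all the vigilance is required; once the correct target identity is fixed, each of the four verifications is the direct expansion displayed above.
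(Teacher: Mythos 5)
Your treatment of parts (\mref{it:bihom1}) and (\mref{it:bihom4}) is correct and is essentially the paper's argument: define the induced action, check $q$ preserves $\Hom_{\rbeta}(M,N)$ using the bimodule relations, and reduce the target identity by evaluation at $m$ to the corresponding axiom for $(N,p_N^T)$; there the sides match (left target $\leftrightarrow$ left axiom on $N$, right $\leftrightarrow$ right). But in parts (\mref{it:bihom3}) and (\mref{it:bihom2}) — exactly the place you flag as requiring vigilance — your stated reduction is backwards, and since your proof of those parts consists only of that assertion, this is a genuine gap. In part (\mref{it:bihom3}), $M$ is an $\rbeta$-$\salpha$-bimodule, so $(M,p_M^S)$ is a \emph{right} $\salpha$-module: the only axiom it satisfies is Eq.~(\mref{eq:rrbm}), and ``Eq.~(\mref{eq:lrbm}) for the $\salpha$-module structure on $M$'' is not even a meaningful hypothesis, because $M$ carries no left $S$-action. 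The correct reduction runs: the left identity Eq.~(\mref{eq:lrbm}) to be verified for $(\Hom_{\rbeta}(M,N),q)$ unwinds at $m\in M$ as
\begin{align*}
(\alpha(s)q(f))(m)=q(f)(m\alpha(s))=f\bigl(p_M^S(m\alpha(s))\bigr),
\end{align*}
and one then applies the \emph{right} Rota-Baxter axiom Eq.~(\mref{eq:rrbm}) to expand $p_M^S(m\alpha(s))=p_M^S(m)\alpha(s)+p_M^S(p_M^S(m)s)+\lambda p_M^S(m)s$; pushing this through $f$ and regrouping gives $q(\alpha(s)f)(m)+q(sq(f))(m)+\lambda q(sf)(m)$. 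This is precisely the step where the paper's proof cites ``the definition of Rota-Baxter right module.'' Dually, in part (\mref{it:bihom2}) $M$ is a \emph{left} $\salpha$-module, and the right identity Eq.~(\mref{eq:rrbm}) for the Hom module reduces to Eq.~(\mref{eq:lrbm}) on $(M,p_M^S)$ — the opposite of your closing claim that the reduction there ``lands on Eq.~(\mref{eq:rrbm}).''

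The conceptual point your template misses is that $\Hom_{\rbeta}(M,N)$ is contravariant in its first argument: pre-composition flips sidedness, so a right $\salpha$-structure on the source yields a left structure on Hom (and vice versa), and the axiom at your disposal is always the one $M$ actually satisfies, which is of the \emph{opposite} side from the identity you are verifying. In the covariant cases (\mref{it:bihom1}), (\mref{it:bihom4}) the sides agree, which is why your uniform template succeeds there; in (\mref{it:bihom3}), (\mref{it:bihom2}) they must cross, and as written your proposal asserts the wrong (indeed ill-posed) reduction rather than performing the computation that would force the correct one.
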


\begin{proof}
(\mref{it:bihom1}).
The $T$-action on $\Hom_R(M,N)$ is defined by
$$(tf)(m):=tf(m) \quad \text{for all } m\in M, f\in \Hom_R(M,N), t\in T.$$
If $f$ is further in $\Hom_{\rbeta}(M,N)$, then $f\circ p^R_M=p^R_N\circ f$. Thus by $N$ being a $(T,\gamma)$-$(R,P)$ bimodule, we obtain
$$(tf)\circ p^R_M(m)=tf(p^R_M(m))=tp^R_N(f(m))=p^R_N(tf(m)) \quad \text{for all } m\in M.$$
Thus $\Hom_{\rbeta}(M, N)$ is a left $T$-submodule of $\Hom_R(M,N)$.

Now we show that $q(f)$ is in $\Hom_{\rbeta}(M,N)$. Since $f$ and $p_N^T$ are right $R$-module homomorphisms, so is their composition $q(f)$. Likewise, since
$f\circ p^R_M=p^R_N\circ f$ from $f\in \Hom_{\rbeta}(M,N)$ and $p^T_N\circ p^R_N=p^R_N\circ p^T_N$ from $N$ being a $(T,\gamma)$-$(R,P)$-bimodule, we have $q(f)\circ p^R_M=p^R_N\circ q(f)$.
Thus $q(f)$ is in $\Hom_{\rbeta}(M,N)$.

We are left to prove
$$\gamma(t)q(f) = q(\gamma(t)f) + q( tq(f)) + \lambda q(tf)\,\text{ for all }t\in T.$$
But this follows from
\begin{align*}
(\gamma(t)q(f))(m) &= \gamma(t)(q(f)(m)) \\
&=\gamma(t)p_N^T(f(m)) \\
&= p_N^T(\gamma(t)f(m)) + p_N^T(tp_N^T(f(m))) + \lambda p_N^T(tf(m)) \quad (\text{as }(N,p^T_N) \text{ is a left } (T,\gamma)\text{-module})\\
&= p_N^T( (\gamma(t)f)(m) ) + p_N^T( t(q(f)(m)) ) + \lambda p_N^T( (tf)(m))\\
&= q(\gamma(t)f) (m) + q( tq(f))(m) + \lambda q(tf)(m).
\end{align*}
The proof of Item~(\mref{it:bihom4}) is similar.
\smallskip

\noindent
(\mref{it:bihom3}). Similar to Item (\mref{it:bihom1}), the $S$-action on $\Hom_{\rbeta}(M,N)$ is defined by
$$(sf)(m)=f(ms), \quad \text{for all } m\in M, s\in S, f\in \Hom_{\rbeta}(M,N).$$
Then it follows in the same way that $q(f)$ is in $\Hom_{\rbeta}(M,N).$
To prove
$$\alpha(s)q(f)=q(sq(f))+q(\alpha(s)f)+\lambda q(sf) \quad \text{for all } s\in S, f\in \Hom_{\rbeta}(M,N),$$
we derive
\begin{align*}
(\alpha(s)q(f))(m) &= q(f)(m\alpha(s))\quad (\text{by the definition of $S$-action})\\
&=f(p_M^S(m\alpha(s)))\quad (\text{by the definition of $q(f)$})\\
&= f(p_{M}^{S}(p_{M}^{S}(m)s)+p_{M}^{S}(m)\alpha(s)+\lambda (p_{M}^{S}(m)s))\\
&\quad (\text{by the definition of Rota-Baxter right module })\\
&= f(p_{M}^{S}(p_{M}^{S}(m)s))+f(p_{M}^{S}(m)\alpha(s))+\lambda f(p_{M}^{S}(m)s)\\
&= q(f)(p_{M}^{S}(m)s)+(\alpha(s)f)(p_{M}^{S}(m))+\lambda (sf)(p_{M}^{S}(m)))\\
&= (sq(f))(p_{M}^{S}(m))+(\alpha(s)f)(p_{M}^{S}(m))+\lambda (sf)(p_{M}^{S}(m))\\
&= q(sq(f))(m)+q(\alpha(s)f)(m)+\lambda q(sf)(m)\\
&= (q(sq(f))+q(\alpha(s)f)+\lambda q(sf))(m)\\
&\quad (\text{by the definition of $S$-action and $q(f)$}),
\end{align*}
as required. The proof of Item (\mref{it:bihom2}) is similar.
\end{proof}

\subsection{Projective and injective Rota-Baxter modules}
By Proposition~\mref{pp:rbab}, the category of left $(R,P)$-modules is an abelian category. By~\cite[\S~2.5]{We}, for an abelian category with enough project and injective objects, one can define derived functors of $\Hom$ using projective resolutions and injective resolutions.
Thus we just need to prove that there are enough projective and injective left $(R,P)$-modules.

We first give the definition of projective left Rota-Baxter modules.

\begin{defn}
Let $(R,P)$ be a Rota-Baxter \bfk-algebra of weight $\lambda$. A left $(R,P)$-module $(V, p)$ is
{\bf projective} if, for every left $(R,P)$-module epimorphism $f:(N,p_N)\to (M,p_M)$ and every left $(R,P)$-module homomorphism $g:(V,p)\to (M,p_M)$,
there exists a left $\rbeta$-module homomorphism $\bar{g}:(V,p)\to (N,p_N)$ making the following diagram commutative:
$$
\xymatrix{
&(V,p)\ar^{g}[d] \ar@{-->}_{\bar{g}}[dl]&\\
(N,p_{N})\ar_{f}[r]&(M,p_{M})\ar[r]&0.}
$$
\mlabel{defn:proj}
\end{defn}

\begin{prop}
A free left Rota-Baxter module is a projective left Rota-Baxter module.
\mlabel{prop:frep}
\end{prop}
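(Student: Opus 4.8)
The plan is to reduce everything to the universal property of the free left $\rbeta$-module established in Theorem~\mref{thm:freem}, exactly as one does for ordinary free modules. Let $(F(X),p)$ be a free left $\rbeta$-module on a set $X$ with structure map $j:X\longrightarrow F(X)$. Suppose we are given a left $\rbeta$-module epimorphism $f:(N,p_N)\longrightarrow(M,p_M)$ and a left $\rbeta$-module homomorphism $g:(F(X),p)\longrightarrow(M,p_M)$; the goal is to produce $\bar{g}:(F(X),p)\longrightarrow(N,p_N)$ with $f\circ\bar{g}=g$, as in Definition~\mref{defn:proj}.

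First I would build a set-theoretic lift on the generators. Since $f$ is surjective, for each $x\in X$ I can choose an element $n_x\in N$ with $f(n_x)=g(j(x))$ (invoking the axiom of choice when $X$ is infinite). This defines a set map $h:X\longrightarrow N$, $x\longmapsto n_x$. At this stage no compatibility with the operators is required, because the universal property only demands a bare set map into the target module. Next I would invoke the universal property of $(F(X),p)$: applied to the left $\rbeta$-module $(N,p_N)$ and the set map $h$, it yields a unique left $\rbeta$-module homomorphism $\bar{g}:(F(X),p)\longrightarrow(N,p_N)$ with $\bar{g}\circ j=h$. In particular $\bar{g}$ is automatically compatible with the Rota-Baxter operators $p$ and $p_N$, so there is nothing to check by hand here.

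It then remains to verify $f\circ\bar{g}=g$. Both $f\circ\bar{g}$ and $g$ are left $\rbeta$-module homomorphisms from $(F(X),p)$ to $(M,p_M)$ — the composite because $f$ and $\bar{g}$ are each such homomorphisms, so that $f\circ\bar{g}$ is $R$-linear and satisfies $(f\circ\bar{g})\circ p=p_M\circ(f\circ\bar{g})$. Moreover they agree on the image of $j$, since $(f\circ\bar{g})\circ j=f\circ h$ sends $x$ to $f(n_x)=g(j(x))=(g\circ j)(x)$. Hence $f\circ\bar{g}$ and $g$ are two $\rbeta$-module homomorphisms extending the same set map $g\circ j:X\longrightarrow M$, and the uniqueness clause of the universal property forces $f\circ\bar{g}=g$, as desired.

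I expect no serious obstacle: the whole argument rests on Theorem~\mref{thm:freem}, and the only points needing care are the elementary facts that a composite of $\rbeta$-module homomorphisms is again one and that the uniqueness in the universal property is being applied to maps into the codomain $(M,p_M)$ rather than into $(N,p_N)$. The genuinely Rota-Baxter features — namely the compatibility of $\bar{g}$ with $p$ and $p_N$ — are supplied for free by the universal property, which is precisely why passing from the classical proof to the Rota-Baxter setting is painless.
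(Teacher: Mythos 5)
Your proposal is correct and follows essentially the same argument as the paper: lift the generators through the epimorphism via surjectivity, extend to a Rota-Baxter homomorphism $\bar{g}$ by the universal property of Theorem~\mref{thm:freem}, and conclude $f\circ\bar{g}=g$ from the uniqueness clause applied to maps into $(M,p_M)$. Your write-up is just slightly more explicit than the paper's about why the composite is an $\rbeta$-module homomorphism and where choice is used.
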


\begin{proof}
The proof is the same as the case for left modules. We give some details for completeness. Let $(F(X),p)$ be the free left Rota-Baxter $(R,P)$-module on $X$ with the natural embedding $j_X: X\longrightarrow F(X)$.
Let $f:(N,p_N)\to (M, p_M)$ be a surjective $(R,P)$-module homomorphism and let $$g: (F(X),p) \to (M,p_X)$$ be a left Rota-Baxter module homomorphism.
Since $f$ is surjective, for each $x\in X$,
there is a $n_x\in N$ such that $f(n_x) = g(x)$. Define a map $g_0: X \to N$ by $x\mapsto n_x$.
Then by the universal property of $F(X)$,
there is a left $(R, P)$-module homomorphism $\bar{g}:F(X) \to N$ such that $ \bar{g}\circ j_X = g_0$. So  $ f\circ \bar{g}\circ j_X = f\circ g_0$.
Again by the universal property of $F(X)$, we have  $ f\circ \bar{g}= g$. This is what we need.
\end{proof}

From Corollary~\mref{coro:quof} and Proposition~\mref{prop:frep}, we obtain that there are enough projective objects in the category of Rota-Baxter modules.

We next introduce the concept of an injective Rota-Baxter module and show that there are enough injective objects in the category of left Rota-Baxter modules, namely every left Rota-Baxter module can be embedded into an injective left Rota-Baxter module. We take a similar approach as in the case of modules, but the process becomes more involved.

\begin{defn}
Let $(R,P)$ be a Rota-Baxter \bfk-algebra of weight $\lambda$.
A left $(R,P)$-module $(E,p)$ is {\bf injective} if, whenever $f$ is a left $\rbeta$-module monomorphism
and $g$ is a left $(R,P)$-module homomorphism,
there exists a left $\rbeta$-module homomorphism $\bar{g}$ making the following diagram commutative:
$$
\xymatrix{
&(E,p)\ar@{<--}^{\bar{g}}[dr]&\\
0\ar[r]&(N,p_{N})\ar_{f}[r]\ar^{g}[u]&(M,p_{M}).}
$$
\end{defn}

We first recall the concept and construction of the ring of Rota-Baxter operators given in~\mcite{GL}.

\begin{defn}\label{rbor}
Let $(R,P)$ be a Rota-Baxter algebra of weight $\lambda$ and $\bfk\langle R, Q \rangle$ be the free product of the \bfk-algebras $R$ and $\bfk[Q]$, where $Q$ is a variable.
The {\bf ring of Rota-Baxter operators on} $(R,P)$, denoted by $R_{RB}\langle Q \rangle$, is defined to be the quotient
\begin{equation*}
R_{RB}\langle Q \rangle=\bfk\langle R, Q \rangle/I_{R,Q},
\mlabel{de:RBO}
\end{equation*}
where $I_{R,Q}$ is the ideal of $\bfk\langle R, Q \rangle$ generated by the subset
$$
\{ QrQ - P(r)Q + QP(r)+\lambda Qr~|~r\in R\}.
$$
Let $\bfone_{R_{RB}\langle Q \rangle}$ denote the identity of $R_{RB}\langle Q \rangle$.
\end{defn}

There is the following correspondence between Rota-Baxter modules and $R_{RB}\langle Q\rangle$-modules~\cite{GL}:

\begin{prop}
If $(M,p)$ is a left $(R,P)$-module, then the resulting left $R$-module $M$ together with $Q\cdot m:=p(m), m\in M$, makes $M$ into a left $R_{RB}\langle Q\rangle$-module. Conversely, if $M$ is a left $R_{RB}\langle Q\rangle$-module, then $(M,p)$ is a left $(R,P)$-module, where $p:M\to M, p(m):=Qm, m\in M$. In particular, left $(R,P)$-ideals of $R_{RB}\langle Q\rangle$ are of the form $(S,\bar{P}|_S)$ where $S$ is a left ideal of $R_{RB}\langle Q\rangle$ and $\bar{P}: R_{RB}\langle Q\rangle \to R_{RB}\langle Q\rangle$ is the left multiplication by $Q$.
\label{pp:corr}
\end{prop}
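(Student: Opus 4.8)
The plan is to exploit the universal property of the free product $\bfk\langle R,Q\rangle$ defining $\rbo$, observing that the Rota-Baxter module axiom in Eq.~(\mref{eq:lrbm}) is exactly the ``operator form'' of the defining relation of $\rbo$.

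First I would treat the forward direction. Let $(M,p)$ be a left $\rbeta$-module. Its $R$-module structure is a unital algebra homomorphism $R\to\End_{\bfk}(M)$ sending each $r$ to left multiplication by $r$, while $p\in\End_{\bfk}(M)$ determines an algebra homomorphism $\bfk[Q]\to\End_{\bfk}(M)$ with $Q\mapsto p$. By the universal property of the free product, these assemble into a unique algebra homomorphism $\rho:\bfk\langle R,Q\rangle\to\End_{\bfk}(M)$ with $r\mapsto$ (multiplication by $r$) and $Q\mapsto p$. To factor $\rho$ through $\rbo=\bfk\langle R,Q\rangle/I_{R,Q}$, I must check that $\rho$ kills each generator $QrQ-P(r)Q+QP(r)+\lambda Qr$ of $I_{R,Q}$. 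Evaluating the corresponding operator at $m\in M$ yields $p(r\,p(m))-P(r)p(m)+p(P(r)m)+\lambda p(rm)$, which vanishes precisely by Eq.~(\mref{eq:lrbm}). Hence $\rho$ descends to $\bar{\rho}:\rbo\to\End_{\bfk}(M)$, making $M$ a left $\rbo$-module with $Q\cdot m=p(m)$.

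Next I would carry out the converse. Given a left $\rbo$-module $M$, the composite $R\to\rbo\to\End_{\bfk}(M)$ makes $M$ a left $R$-module, and $p(m):=Qm$ is $\bfk$-linear as the action of a fixed element of $\rbo$. The defining relation of $\rbo$ says $QrQ=P(r)Q-QP(r)-\lambda Qr$; applying both sides to $m$ gives $p(r\,p(m))=P(r)p(m)-p(P(r)m)-\lambda p(rm)$, which rearranges to Eq.~(\mref{eq:lrbm}). Thus $(M,p)$ is a left $\rbeta$-module. The two passages are mutually inverse on the underlying abelian groups, since each simply reinterprets the action of $Q$ as the operator $p$ and conversely, so they set up the asserted correspondence.

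Finally, for the ``in particular'' clause I would regard $\rbo$ as a left module over itself, which under the correspondence is the $\rbeta$-module $(\rbo,\bar{P})$ with $\bar{P}$ the left multiplication by $Q$. A left $\rbeta$-submodule is an $R$-submodule $S$ with $\bar{P}(S)\subseteq S$, that is, a $\bfk$-subspace closed under left multiplication by every $r\in R$ and by $Q$. Since $R$ and $Q$ generate $\rbo$ as a $\bfk$-algebra, this is equivalent to closure under left multiplication by all of $\rbo$, i.e.\ to $S$ being a left ideal; the operator is then forced to be $\bar{P}|_S$. I do not expect a genuine obstacle here: the only real care needed throughout is to keep the two conventions aligned---left multiplication by $R$ versus the operator $p=Q$---so that the generator of $I_{R,Q}$ matches Eq.~(\mref{eq:lrbm}) sign for sign.
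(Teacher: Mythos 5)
Your proposal is correct. Note that the paper itself supplies no proof of this proposition---it is quoted from the preprint~\mcite{GL}---so there is no internal argument to compare against; your proof via the universal property of the free product $\bfk\langle R,Q\rangle$ is the natural way to fill that gap, and every step checks out. In the forward direction the image under $\rho$ of the generator $QrQ-P(r)Q+QP(r)+\lambda Qr$ is precisely the operator $m\mapsto p(rp(m))-P(r)p(m)+p(P(r)m)+\lambda p(rm)$, which vanishes by Eq.~(\mref{eq:lrbm}), so $\rho$ factors through the quotient; the converse is the same computation read backwards, and your remark that the two passages are mutually inverse is justified exactly because the images of $R$ and $Q$ generate $\rbo$, so an $\rbo$-action is determined by its restriction to them. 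The same generation fact is the one point that needed to be said aloud in the ``in particular'' clause---a $\bfk$-submodule closed under left multiplication by $R$ and by $Q$ is closed under left multiplication by every word in $R$ and $Q$, hence by all of $\rbo$---and you do say it, so the identification of left $\rbeta$-submodules of $(\rbo,\bar{P})$ with left ideals of $\rbo$ equipped with $\bar{P}|_S$ is complete.
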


Applying this result, we next give the Rota-Baxter module version of the Baer Criterion for injectivity of modules.

\begin{prop}\label{RB-bare-cri}
Let $(V,p)$ be a left $(R,P)$-module. Then $(V,p)$ is an injective left $(R,P)$-module if and only, for every left $(R,P)$-ideal $(S,\bar{P}|_S)$ of $(R_{RB}\langle Q \rangle,\bar{P})$, every $(R,P)$-module homomorphism $f: (S,\overline{P}|_{S})\longrightarrow (V,p)$ can be extended to one from $(R_{RB}\langle Q \rangle,\overline{P})$.
\end{prop}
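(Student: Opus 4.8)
The plan is to follow the classical proof of the Baer Criterion, transporting it through the correspondence of Proposition~\ref{pp:corr} between left $(R,P)$-modules and left $R_{RB}\langle Q\rangle$-modules. The forward direction is trivial: if $(V,p)$ is injective, then since every inclusion $(S,\bar{P}|_S)\hookrightarrow (R_{RB}\langle Q\rangle,\bar{P})$ of a left $(R,P)$-ideal is in particular a left $(R,P)$-module monomorphism, the defining extension property of injectivity directly yields the extension of any $f\colon (S,\bar{P}|_S)\to(V,p)$. So the substance is entirely in the converse.

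For the converse, suppose the ideal-extension condition holds, and let $g\colon (N,p_N)\to(V,p)$ be a homomorphism and $\iota\colon (N,p_N)\hookrightarrow(M,p_M)$ a monomorphism of left $(R,P)$-modules; I want to extend $g$ over $M$. First I would reduce to the case where $\iota$ is an honest inclusion $N\subseteq M$ of a Rota-Baxter submodule. Then I would run Zorn's lemma on the poset of pairs $(M',h)$ where $N\subseteq M'\subseteq M$ is an intermediate Rota-Baxter submodule and $h\colon(M',p_M|_{M'})\to(V,p)$ extends $g$, ordered by extension. Any chain has an upper bound given by the union of domains with the common extension, so a maximal element $(M_0,h_0)$ exists; the goal is to show $M_0=M$.

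The heart of the argument is the maximality step: assuming $M_0\neq M$, pick $m\in M\setminus M_0$ and enlarge $M_0$ by adjoining $m$, which contradicts maximality once I extend $h_0$ to the Rota-Baxter submodule generated by $M_0$ and $m$. Here is where I would invoke the correspondence: viewing everything as $R_{RB}\langle Q\rangle$-modules via $Q\cdot m:=p(m)$, the Rota-Baxter submodule generated by $M_0$ and $m$ is exactly the $R_{RB}\langle Q\rangle$-submodule $M_0+R_{RB}\langle Q\rangle\, m$. The set $S:=\{a\in R_{RB}\langle Q\rangle \mid a\cdot m\in M_0\}$ is a left ideal of $R_{RB}\langle Q\rangle$, closed under left multiplication by $Q$, hence a left $(R,P)$-ideal $(S,\bar{P}|_S)$. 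The composite $a\mapsto h_0(a\cdot m)$ defines an $(R,P)$-module homomorphism $(S,\bar{P}|_S)\to(V,p)$, which by hypothesis extends to some $\phi\colon(R_{RB}\langle Q\rangle,\bar{P})\to(V,p)$. I would then define the extension of $h_0$ on $M_0+R_{RB}\langle Q\rangle\, m$ by $x+a\cdot m\mapsto h_0(x)+\phi(a)$, check it is well-defined using that $\phi$ agrees with $a\mapsto h_0(a\cdot m)$ on $S$, and verify it is $R$-linear and commutes with $p$.

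The main obstacle I anticipate is verifying that this extension genuinely intertwines the operators, i.e.\ that it is a morphism in ${\bf _{\rbeta}Mod}$ and not merely an $R$-module map. Compatibility with $p$ amounts to compatibility with the $Q$-action, and this is precisely where the requirement that $\phi$ be an $(R,P)$-module homomorphism (equivalently, an $R_{RB}\langle Q\rangle$-module map), rather than just an $R$-module map, does the work; one must confirm that the defining relation $QrQ=P(r)Q-QP(r)-\lambda Qr$ of $R_{RB}\langle Q\rangle$ is respected so that translating from the usual $R_{RB}\langle Q\rangle$-module Baer Criterion back into Rota-Baxter language introduces no inconsistency. Once the extension is shown to lie in $\rhom_{\rbeta}$, maximality is contradicted and the proof concludes.
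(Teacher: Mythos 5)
Your proposal is correct and follows essentially the same route as the paper: the trivial forward direction, Zorn's lemma on partial extensions, and at the maximality step the ideal $S=\{a\in R_{RB}\langle Q\rangle \mid a\cdot m\in M_0\}$ with the extension $x+a\cdot m\mapsto h_0(x)+\phi(a)$, using Proposition~\ref{pp:corr} to transfer between $(R,P)$-module maps and $R_{RB}\langle Q\rangle$-module maps (note $\phi(a)=a\,\phi(\bfone_{R_{RB}\langle Q \rangle})$, which is exactly the paper's formula $a+rb\mapsto h(a)+rc$). The operator-compatibility issue you flag as the main obstacle is handled in the paper exactly as you anticipate: once the extension is an $R_{RB}\langle Q\rangle$-module homomorphism, it is automatically a morphism of Rota-Baxter modules by the correspondence, so no separate verification of the relation in $R_{RB}\langle Q\rangle$ is needed.
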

\begin{proof}
We adapt the proof of the Baer Criterion as presented for example in~\mcite{Rot}.

Suppose that $(V,p)$ is an injective $(R,P)$-module. Then by the definition of an injective Rota-Baxter module, every $(R,P)$-module homomorphism $f: (S,\overline{P}|_{S})\longrightarrow (V,p)$ can be extended to one from $(R_{RB}\langle Q \rangle,\overline{P})$.

Conversely, assume that, for every left $(R,P)$ ideal $(S,\bar{P}|_S)$ of $R_{RB}\langle Q \rangle$, every $(R,P)$-module homomorphism $f: (S,\overline{P}|_{S})\longrightarrow (V,p)$ can be extended to one from $(R_{RB}\langle Q \rangle,\overline{P})$.

Let $f: (N,p_N)\to (M,p_M)$ be a monomorphism of left $(R,P)$-modules and let $g: (N,p_N)\to (V,p)$ be a left $(R,P)$-module homomorphism. Identify $(N,p_N)$ as a left $(R,P)$-submodule of $(M,p_M)$ and denote
$$\mathcal{S}:=\{ h:(H,p_H)\to (V,p)\,|\, (N,p_N)\leq (H,p_H)\leq  (M,p_M),  h|_{(N,p_N)}=g\}.$$
Then $\mathcal{S}$ is non-empty since it contains $g:(N,p_N)\to (V,P)$. Define a partial order on $\mathcal{S}$ by the inclusion of the domains $(H,p_H)$. Then $\mathcal{S}$ contains a maximal element $h:(H,p_H)\to (V,p)$ by Zorn'lemma. If $H=M$, then we are done. Supposing not, then take $b\in M\backslash H$. Regard $H$ as a left $R_{RB}\langle Q\rangle$-module by Proposition~\mref{pp:corr} and denote
$$ L:=\{ r\in R_{RB}\langle Q\rangle \,|\, rb \in H \},$$
which is a left $R_{RB}\langle Q\rangle$-module. Then $(L,\bar{P}|_{L})$ is a left $(R,P)$-module and the composition
$$\eta: L\to H \to V, \quad r\mapsto rb \mapsto h(rb)$$
is a well-defined $(R,P)$-module homomorphism. By the assumption, there is an $(R,P)$-module homomorphism $\varphi:(R_{RB}\langle Q\rangle,\bar{P})\to (V,p)$ such that $\varphi(r)=h(rb)$ for $r\in L$. Denote $c:=\varphi(\bfone_{R_{RB}\langle Q\rangle})$ and define a map
$$ \psi: H+R_{RB}\langle Q\rangle b \longrightarrow V, \quad
a+rb \mapsto h(a)+rc, \quad a\in H, r\in R_{RB}\langle Q\rangle.$$
If $a+rb=a'+r'b$ with $a, a'\in H$ and $r,r'\in R_{RB}\langle Q\rangle$, then
$$h(a-a')=h((r'-r)b)=\varphi(r'-r)=(r'-r)c\,\text{ and so }\, h(a)+rc=h(a')+r'c,$$
which implies that $\psi$ is well-defined.
Then $\psi$ is a left $R_{RB}\langle Q\rangle$-module homomorphism and hence, by Proposition~\ref{pp:corr}, a left $(R,P)$-module homomorphism extending $g$. Hence it is in $\mathcal{S}$ and is strictly larger than $h$. This is a contradiction. Thus we must have $H=M$.
\end{proof}

Recall that an abelian group $G$ is called a divisible abelian group, if for any $x\in G$ and any nonzero integer $n\in \mathbb{Z}$, there is some $y\in G$ such that $x=ny$.

\begin{prop}\mlabel{injective-rbm}
Let $(R,P)$ be a Rota-Baxter \bfk-algebra of weight $\lambda$ and
$D$ be a divisible abelian group. Then $(\Hom_{\mathbb{Z}}(R_{RB}\langle Q \rangle,D),q)$ is an injective left $(R,P)$-module.
\end{prop}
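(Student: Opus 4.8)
The plan is to recognize the asserted object as the standard abelian-group hom-dual $\Hom_{\mathbb{Z}}(\rbo, D)$ regarded as a module over the ring of Rota-Baxter operators, and to transport the classical injectivity argument across the correspondence of Proposition~\ref{pp:corr}. First I would record that $\Hom_{\mathbb{Z}}(\rbo, D)$ carries its usual left $\rbo$-module structure coming from the right regular action of $\rbo$ on itself, namely $(a\cdot f)(x) = f(xa)$ for $a, x\in \rbo$ and $f\in \Hom_{\mathbb{Z}}(\rbo, D)$. Restricting this action along the embedding $R\hookrightarrow \rbo$ gives the left $R$-module structure, and the prescribed operator is $q(f) = Q\cdot f$, that is $q(f)(x) = f(xQ)$. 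By construction this is exactly the operator attached by Proposition~\ref{pp:corr} to the left $\rbo$-module $\Hom_{\mathbb{Z}}(\rbo, D)$, so $(\Hom_{\mathbb{Z}}(\rbo, D), q)$ is the left $(R,P)$-module corresponding to it.

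Since the correspondence of Proposition~\ref{pp:corr} is an isomorphism of categories between left $(R,P)$-modules and left $\rbo$-modules, it matches monomorphisms with monomorphisms and homomorphisms with homomorphisms, hence injective objects with injective objects. Therefore it suffices to prove that $\Hom_{\mathbb{Z}}(\rbo, D)$ is an injective left $\rbo$-module in the ordinary sense.

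For this I would use the adjunction isomorphism
\[
\Hom_{\rbo}(M, \Hom_{\mathbb{Z}}(\rbo, D)) \;\cong\; \Hom_{\mathbb{Z}}(M, D),
\]
natural in the left $\rbo$-module $M$, where $\phi\mapsto (m\mapsto \phi(m)(\bfone_{\rbo}))$ and, conversely, $\psi\mapsto (m\mapsto (a\mapsto \psi(am)))$; a direct check shows these are mutually inverse and that the second indeed lands in $\rbo$-linear maps. Because $D$ is divisible it is an injective $\mathbb{Z}$-module, so the functor $\Hom_{\mathbb{Z}}(-, D)$ sends injections of abelian groups to surjections. Given any monomorphism $A\hookrightarrow B$ of left $\rbo$-modules, it is in particular injective on underlying abelian groups, so $\Hom_{\mathbb{Z}}(B, D)\to \Hom_{\mathbb{Z}}(A, D)$ is surjective; by naturality of the adjunction, $\Hom_{\rbo}(B, \Hom_{\mathbb{Z}}(\rbo, D))\to \Hom_{\rbo}(A, \Hom_{\mathbb{Z}}(\rbo, D))$ is surjective, which is precisely the injectivity of $\Hom_{\mathbb{Z}}(\rbo, D)$.

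The only delicate point is the bookkeeping in the first two steps: one must verify that the operator $q$ named in the statement is literally left multiplication by $Q$ pushed through Proposition~\ref{pp:corr}, and that injectivity, being a lifting property, is preserved by the category isomorphism. Once the problem is correctly restated for $\rbo$-modules, the adjunction argument is routine and the divisibility of $D$ supplies everything else. If one wished to avoid the category isomorphism, one could instead combine the surjectivity of $\Hom_{\mathbb{Z}}(B,D)\to \Hom_{\mathbb{Z}}(A,D)$ with the Baer-type criterion of Proposition~\ref{RB-bare-cri}, but checking that the adjunction is compatible with the $Q$-action makes that route no shorter.
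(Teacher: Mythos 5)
Your proof is correct, but it takes a genuinely different route from the paper. The paper proves this proposition by invoking its Rota--Baxter Baer criterion (Proposition~\ref{RB-bare-cri}): given a homomorphism $f$ from a left $(R,P)$-ideal $(S,\bar{P}|_S)$ of $(\rbo,\bar{P})$ into $(\Hom_{\mathbb{Z}}(\rbo,D),q)$, it forms $\phi(s)=f(s)(\bfone_{\rbo})$, extends $\phi$ to $\psi:\rbo\to D$ using the $\mathbb{Z}$-injectivity of the divisible group $D$, and then defines the extension $g(x)(y)=\psi(yx)$, checking by hand that $g$ is $R$-linear, intertwines $\bar P$ with $q$, and restricts to $f$ on $S$. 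You instead bypass the Baer criterion entirely: you identify $(\Hom_{\mathbb{Z}}(\rbo,D),q)$ as the image under the correspondence of Proposition~\ref{pp:corr} of the usual coinduced left $\rbo$-module, observe that a category isomorphism transports injective objects, and then run the classical adjunction argument $\Hom_{\rbo}(M,\Hom_{\mathbb{Z}}(\rbo,D))\cong\Hom_{\mathbb{Z}}(M,D)$ together with exactness of $\Hom_{\mathbb{Z}}(-,D)$. Your approach is more conceptual and makes transparent that the proposition is an instance of the ring-theoretic fact that $\Hom_{\mathbb{Z}}(A,D)$ is an injective left $A$-module for any ring $A$; it also does not lean on Proposition~\ref{RB-bare-cri} and the Zorn's-lemma argument behind it. What the paper's route buys is a demonstration of its newly established Baer criterion in action, staying entirely inside the Rota--Baxter module language. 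It is worth noting that the computational core is shared: your evaluation map $\phi\mapsto(m\mapsto\phi(m)(\bfone_{\rbo}))$ and your inverse $\psi\mapsto(m\mapsto(a\mapsto\psi(am)))$ are exactly the paper's $\phi(s)=f(s)(\bfone_{\rbo})$ and $g(x)(y)=\psi(yx)$, so the two proofs differ in packaging rather than in the underlying formulas. The one point you rightly flag as bookkeeping does deserve the care you give it: Proposition~\ref{pp:corr} as stated is only an object-level correspondence, so the matching of homomorphisms (an $R$-linear map commuting with the operators is automatically $\rbo$-linear, since $\rbo$ is generated by $R$ and $Q$) and hence of monomorphisms and injectives must be noted explicitly, as you do.
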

\begin{proof}
Applying Proposition~\mref{RB-bare-cri}, we let $S$ be a left ideal of $R_{RB}\langle Q \rangle$ and let $\eta: (S,{\bar{P}}|_{S})\longrightarrow (R_{RB}\langle Q \rangle,\bar{P})$ be the embedding map. For any
$$f:(S,\bar{P}|_S)\to (\Hom_{\mathbb{Z}}(R_{RB}\langle Q \rangle,D),q),$$
we extend $f$ as in the following diagram:
$$
\xymatrix{
&(\Hom_{\mathbb{Z}}(R_{RB}\langle Q \rangle,D), q)\ar@{<--}^{g}[dr]&\\
0\ar[r]&(S, \bar{P}|_{S})\ar_{\eta}[r]\ar^{f}[u]&(R_{RB}\langle Q \rangle, \bar{P}).}
$$

Define $\phi: S\longrightarrow D$ by $\phi(s)=f(s)(\bfone_{R_{RB}\langle Q \rangle})\in D$. Then $\phi$ is a $\mathbb{Z}$-module homomorphism. Since an abelian group is an injective $\mathbb{Z}$-module if and only if it is a divisible abelian group~\mcite{Rot}, $D$ is an injective $\mathbb{Z}$-module. Then there is a $\mathbb{Z}$-module homomorphism $\psi: R_{RB}\langle Q \rangle\longrightarrow D$ such that $\phi=\psi \circ \eta$.

For any $x, y\in R_{RB}\langle Q \rangle$, define $g(x)(y)=\psi(yx)$. Then $g$ is a map from $R_{RB}\langle Q \rangle$ to $\Hom_{\mathbb{Z}}(R_{RB}\langle Q \rangle,D)$. Let $r\in R$. Then $$g(rx)(y)=\psi(y(rx))=\psi((yr)x)=g(x)(yr)=(rg(x))(y)$$
and so $g$ is an $R$-module homomorphism. Since
\begin{eqnarray*}
((g\circ\bar{P})(x))(y)=(g(\bar{P}(x)))(y)=\psi(y\bar{P}(x))
=\psi(y(Q x))=\psi((y Q)x))
=g(x)(y Q)=((q\circ g)(x))(y),
\end{eqnarray*}
$g: (R_{RB}\langle Q\rangle,\bar{P})\longrightarrow (\Hom_{\mathbb{Z}}(R_{RB}\langle Q\rangle,D),q)$ is an $(R,P)$-module homomorphism. Let $s\in S$. For $x\in R$, we have
$$
((g\circ\eta)(s))(x)=g(s)(x)=\psi(xs)=\phi(xs)=f(xs)(\bfone_{R_{RB}\langle Q \rangle})=(xf(s))(\bfone_{R_{RB}\langle Q \rangle})=f(s)(x).
$$
For $x=Q$, we have
\begin{eqnarray*}
((g\circ\eta)(s))(Q)&=&g(s)(Q)=\psi(Qs)=\phi(Qs)=f(Qs)(\bfone_{R_{RB}\langle Q \rangle})\\
&=&((f\circ\bar{P})(s))(\bfone_{R_{RB}\langle Q \rangle})=((q\circ f)(s))(\bfone_{R_{RB}\langle Q \rangle})\\
&=&(q(f(s)))(\bfone_{R_{RB}\langle Q \rangle})=f(s)(Q),
\end{eqnarray*}
which implies $g\circ \eta=f$. Hence $(\Hom_{\mathbb{Z}}(R_{RB}\langle Q \rangle,D),q)$ is an injective $(R,P)$-module by Proposition~\ref{RB-bare-cri}.
\end{proof}

\begin{theorem}
Let $(R,P)$ be a Rota-Baxter \bfk-algebra of weight $\lambda$ and $(V,p)$ be a left $(R,P)$-module. Then $(V,p)$ can be embedded into an injective $(R,P)$-module.
\mlabel{thm:sinj}
\end{theorem}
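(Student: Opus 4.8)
The plan is to embed $(V,p)$ into an injective Rota-Baxter module built out of the divisible-group functor established in Proposition~\mref{injective-rbm}, following the classical strategy of embedding a module into a product of copies of an injective cogenerator. First I would forget the Rota-Baxter structure and regard $V$ as an abelian group, hence as a $\ZZ$-module. Since every abelian group embeds into a divisible abelian group (for instance, writing $V$ as a quotient of a free $\ZZ$-module $\bigoplus \ZZ$ and embedding into the corresponding $\bigoplus \QQ$, then passing to the induced map on the quotient), I obtain a divisible abelian group $D$ together with an injective $\ZZ$-module homomorphism $\iota: V \hookrightarrow D$.

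Next I would use the adjunction between restriction of scalars along $\ZZ \to R_{RB}\langle Q\rangle$ and the functor $\Hom_{\ZZ}(R_{RB}\langle Q\rangle, -)$. Viewing $V$ as a left $R_{RB}\langle Q\rangle$-module via Proposition~\mref{pp:corr}, the unit of this adjunction gives a natural $R_{RB}\langle Q\rangle$-module homomorphism
$$
\Phi: V \longrightarrow \Hom_{\ZZ}(R_{RB}\langle Q\rangle, V), \quad v \longmapsto \big(x \mapsto xv\big),
$$
and composing with the map induced by $\iota$ yields
$$
\Psi: V \longrightarrow \Hom_{\ZZ}(R_{RB}\langle Q\rangle, D), \quad v \longmapsto \big(x \mapsto \iota(xv)\big).
$$
By Proposition~\mref{injective-rbm}, the target $(\Hom_{\ZZ}(R_{RB}\langle Q\rangle, D), q)$ is an injective left $(R,P)$-module, so it remains to check that $\Psi$ is an injective homomorphism of Rota-Baxter modules. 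I would verify that $\Psi$ is $R$-linear and respects the operators, namely that $\Psi \circ p = q \circ \Psi$, by unwinding the definition of $q$ from Proposition~\mref{injective-rbm} together with $Q \cdot v = p(v)$ from Proposition~\mref{pp:corr}: evaluating $(q \circ \Psi)(v)$ at $x$ gives $\Psi(v)(xQ) = \iota(xQv) = \iota(x\, p(v)) = \Psi(p(v))(x)$, as needed.

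Finally I would argue injectivity of $\Psi$. If $\Psi(v) = 0$, then $\iota(xv) = 0$ for all $x \in R_{RB}\langle Q\rangle$; taking $x = \bfone_{R_{RB}\langle Q\rangle}$ gives $\iota(v) = 0$, and since $\iota$ is injective we conclude $v = 0$. Thus $\Psi$ embeds $(V,p)$ into the injective Rota-Baxter module $(\Hom_{\ZZ}(R_{RB}\langle Q\rangle, D), q)$, completing the proof. I expect the main obstacle to be the compatibility check $\Psi \circ p = q \circ \Psi$, since it requires correctly matching the operator $q$ (which in Proposition~\mref{injective-rbm} is given by right translation by $Q$ on the $R_{RB}\langle Q\rangle$-argument) with the operator $p$ on $V$ realized as multiplication by $Q$; everything else is a routine transport of the standard module-theoretic embedding through the equivalence of Proposition~\mref{pp:corr}.
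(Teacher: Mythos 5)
Your proposal is correct and follows essentially the same route as the paper: the paper also embeds $V$ into a divisible group $D$, uses Proposition~\ref{pp:corr} to view $V$ as an $R_{RB}\langle Q\rangle$-module, maps $v$ to the homomorphism $x\mapsto xv$ (then post-composes with the embedding into $D$), checks compatibility with the operators exactly as you do, deduces injectivity by evaluating at $\bfone_{R_{RB}\langle Q \rangle}$, and invokes Proposition~\ref{injective-rbm} for injectivity of the target. The only cosmetic difference is that the paper factors your map $\Psi$ as a composite $\bar{\eta}\circ f$ of two homomorphisms verified separately, whereas you treat it as a single map.
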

\begin{proof}
Define
\begin{equation*}
R_{RB}\langle Q \rangle\times V\longrightarrow V, \ (r,m)\longmapsto rm, (Q,m) \longmapsto p(m), \ r\in R, m\in V.
\mlabel{eq:rbm-Q}
\end{equation*}
Then V is an $R_{RB}\langle Q \rangle$-module. Now define
$$f: (V,p)\longrightarrow (\Hom_{\mathbb{Z}}(R_{RB}\langle Q \rangle,V),q), \quad m\mapsto \varphi_{m},$$
where $\varphi_{m}(x)=xm$ for $x\in R_{RB}\langle Q \rangle$. Thus $\varphi_{m}$ is a $\mathbb{Z}$-module homomorphism. For any $r\in R$, $x\in R_{RB}\langle Q \rangle$ and $m\in V$, we have
$$
f(rm)(x)=x(rm)=(xr)m=\varphi_{m}(xr)=r\varphi_{m}(x)=(rf(m))(x),
$$
and so $f$ is an $R$-module homomorphism. Since
\begin{eqnarray*}
((f\circ p)(m))(x)&=&f(p(m))(x)=f(Qm)(x)=x(Qm)=(xQ)m\\
&=&\varphi_{m}(xQ)=f(m)(xQ)=(q(f(m)))(x)=(q\circ f(m))(x),
\end{eqnarray*}
$f$ is an $(R,P)$-module homomorphism. We now show that it is a monomorphism. For any $m, m'\in V$, if $\varphi_{m}=\varphi_{m'}$, then $xm=\varphi_{m}(x)=\varphi_{m'}(x)=xm'$ for all $x\in R_{RB}\langle Q \rangle$. In particular, this is true for $x = \bfone_{R_{RB}\langle Q \rangle}$, and so $m =m'$.

Since every abelian group can be embedded into a divisible abelian group~\mcite{Rot}, there exists an embedding map $\eta_{1}: V \longrightarrow D$.
Now define $\bar{\eta}: (\Hom_{\mathbb{Z}}(R_{RB}\langle Q \rangle,V), q)\longrightarrow (\Hom_{\mathbb{Z}}(R_{RB}\langle Q \rangle,D), q')$ by $\tau\longmapsto \eta_{1}\circ\tau$. For any $r\in R$ and $x\in R_{RB}\langle Q \rangle$, we have
$$
(\bar{\eta}(r\tau))(x)=\eta_{1}((r\tau)(x))=(\eta_{1}\circ\tau)(xr)=r(\eta_{1}\circ\tau)(x)
=(r\bar{\eta}(\tau))(x),
$$
and so $\bar{\eta}$ is an $R$-module homomorphism. Moreover, since
\begin{align*}
((q'\circ\bar{\eta})(\tau))(x)&=(q'(\bar{\eta}(\tau)))(x)=\bar{\eta}(\tau)(xQ)= (\eta_{1}\circ\tau)(xQ)=(\eta_{1} (q(\tau)))(x)=((\bar{\eta}\circ q)(\tau))(x),
\end{align*}
$\bar{\eta}$ is an $(R,P)$-module homomorphism. Therefore, $(\Hom_{\mathbb{Z}}(R_{RB}\langle Q \rangle,D),q')$ is an injective $(R,P)$-module by Proposition~\mref{injective-rbm} and
$$
\xymatrix{
\bar{\eta}\circ f: (V, p) \ar^{f \qquad }[r] & (\Hom_{\mathbb{Z}}(R_{RB}\langle Q \rangle,V),q) \ar^{\bar{\eta}}[r]& (\Hom_{\mathbb{Z}}(R_{RB}\langle Q \rangle,D),q'),
}
$$
 is an $(R,P)$-monomorphism, as required.
\end{proof}

\section{Flat Rota-Baxter modules}
\mlabel{sec:flat}

We finally turn to the study of flat Rota-Baxter modules, beginning with the construction of the tensor product of two Rota-Baxter modules in the category of Rota-Baxter modules.

\subsection{Tensor product of Rota-Baxter modules}
We first define the tensor product of Rota-Baxter modules.

\begin{defn}
Let $(R,P)$ be a Rota-Baxter algebra of weight $\lambda$, $(M_{\rbeta},p_{M})$
 a right $(R,P)$-module and $(_{\rbeta}N,p_{N})$
a left $(R,P)$-module.
\begin{enumerate}
\item
Let $G$ be an (additive) abelian group. A map $f: M\times N\longrightarrow G$ is called $(R,P)$-{\bf bilinear} if for all $m, m'\in M$, $n, n'\in N$ and $r\in R$, we have
\begin{align*}
f(m+m', n)&=f(m, n)+f(m', n),\\
f(m, n+n')&=f(m, n)+f(m, n'),\\
f(mr, n)&=f(m, rn),\\
f(p_{M}(m), n)&=f(m, p_{N}(n)).
\end{align*}
\item
The $\bf tensor~product$ $M\otimes_{(R,P)} N$ of  $(M_{\rbeta},p_{M})$ and $(_{\rbeta}N,p_{N})$ over $(R,P)$ is an abelian group together with
a $(R,P)$-bilinear map
$$ \iota: M\times N\longrightarrow M\otimes_{(R,P)} N$$
satisfying the following universal property: for every abelian group $G$ and every $(R,P)$-bilinear map $f: M\times N\longrightarrow G$,
there exists a unique abelian group homomorphism $\widetilde{f}: M\otimes_{\rbeta} N\longrightarrow G$ making the following diagram commutative
$$
\xymatrix{
&M\times N\ar^{\iota}[rr] \ar_{f}[dr]&&M\otimes_{\rbeta} N.\ar@{-->}^{\widetilde{f}}[dl]\\
&&G&}
$$
\end{enumerate}
\mlabel{def:rbmt}
\end{defn}

The following result gives a construction of the tensor product of Rota-Baxter modules.

\begin{theorem}
Let $(R,P)$ be a Rota-Baxter algebra of weight $\lambda$ and let $(M_{(R,P)},p_{M})$, $(_{(R,P)}N,p_{N})$ be Rota-Baxter modules.
Let $F$ be the free abelian group on the set $M \times N$ and $I$ the subgroup of $F$ generated by all elements of $F$
of the form
\begin{align*}
&(m+m', n) - (m, n)-(m', n),\ (m, n+n')- (m, n) - (m, n'),\\
&(mr, n)- (m, rn),\  (p_{M}(m), n)-(m, p_{N}(n)),\  m, m'\in M, n, n'\in N,r\in R.
\end{align*}
Then $F/I$ with the natural map $\iota: M \times N \rightarrow F \rightarrow F/I$ is the tensor product $M\otimes_{(R,P)} N$.
\mlabel{thm:rbmt}
\end{theorem}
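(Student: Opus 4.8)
The plan is to imitate the classical construction of the tensor product of modules, the only new ingredient being the fourth family of generators of $I$, which encodes the compatibility of $\iota$ with the Rota-Baxter operators $p_M$ and $p_N$. First I would record the one structural fact that drives everything: since $F$ is the \emph{free} abelian group on the set $M\times N$, every set map from $M\times N$ into an abelian group extends uniquely to a group homomorphism out of $F$. Writing $\iota=\pi\circ\iota_0$, where $\iota_0\colon M\times N\to F$ is the canonical inclusion and $\pi\colon F\to F/I$ is the quotient map, this is what will give both existence and uniqueness below.

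Next I would verify that $\iota$ is $(R,P)$-bilinear in the sense of Definition~\ref{def:rbmt}. Each of the four defining conditions corresponds term-by-term to one of the four families of generators of $I$: for instance $\iota(m+m',n)-\iota(m,n)-\iota(m',n)$ is the image in $F/I$ of the element $(m+m',n)-(m,n)-(m',n)\in I$, hence is $0$; and likewise additivity in the second slot, the balancing relation $\iota(mr,n)=\iota(m,rn)$, and the Rota-Baxter relation $\iota(p_M(m),n)=\iota(m,p_N(n))$ each follow from the vanishing of the corresponding generator. Thus the generating set of $I$ was chosen precisely so that $\iota$ becomes bilinear.

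For the universal property, given an abelian group $G$ and an $(R,P)$-bilinear map $f\colon M\times N\to G$, I would extend $f$, viewed merely as a set map, to the unique group homomorphism $\hat f\colon F\to G$. The key point is that $\hat f$ annihilates every generator of $I$: evaluating $\hat f$ on a generator reproduces exactly the corresponding bilinearity relation for $f$, which holds by hypothesis (the fourth generator uses precisely $f(p_M(m),n)=f(m,p_N(n))$). Hence $\hat f(I)=0$, so $\hat f$ factors through a unique homomorphism $\widetilde f\colon F/I\to G$ with $\widetilde f\circ\pi=\hat f$, and therefore $\widetilde f\circ\iota=\hat f\circ\iota_0=f$. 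Uniqueness of $\widetilde f$ is immediate since $\operatorname{im}\iota$ generates $F/I$ as an abelian group ($M\times N$ generates $F$ and $\pi$ is surjective), so any homomorphism on $F/I$ is determined by its restriction along $\iota$.

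I do not anticipate a genuine obstacle: this is the standard free-group-modulo-relations verification, and the Rota-Baxter structure enters only through the single extra relation $(p_M(m),n)-(m,p_N(n))$, treated in exactly the same manner as the three classical ones. The only things to watch are the bookkeeping---matching the fourth bilinearity condition to the fourth generator---and the observation that no module structure on $F/I$ is needed here, since the universal property in Definition~\ref{def:rbmt} is stated purely at the level of abelian groups.
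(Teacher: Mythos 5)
Your proof is correct and follows essentially the same route as the paper's: extend the bilinear map $f$ by additivity to the free abelian group $F$, observe that it kills every generator of $I$ so it factors through $F/I$, and deduce uniqueness from the fact that pure tensors generate $F/I$. The only difference is that you also explicitly verify the $(R,P)$-bilinearity of $\iota$ (including the Rota-Baxter relation $\iota(p_M(m),n)=\iota(m,p_N(n))$), a step the paper leaves implicit; this is a welcome addition rather than a deviation.
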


\begin{proof}
For $(m,n)\in M\times N$, write $m\ot_{\rbeta} n:= \iota((m,n))$, called a pure tensor. Then elements in $F/I$ are finite sums of pure tensors. We verify the desired universal property of $F/I$.
\smallskip

Let $f: M\times N\longrightarrow G$ be a $(R,P)$-bilinear map. Then
$f$ extends to an abelian group homomorphism $f': F \longrightarrow G$ by additivity. Since
$f'$ vanishes on the generators of $I$, $f'$ induces a well-defined abelian group homomorphism
$\tilde{f}: F/I \longrightarrow G$ such that $f'((m,n)) = \tilde{f}(m\ot_{\rbeta}n)$ with $m\in M$ and $n\in N$.
So we have
$$ f(m,n) = f'(m,n) = \tilde{f}(m\ot_{\rbeta}n) = \tilde{f}\circ \iota(m,n),$$
as required.
\smallskip

If $\widetilde{f}$ satisfies the conditions, then
$$
\widetilde{f}(\sum_{i} m_i \otimes_{\rbeta} n_i) =  \sum_i \widetilde{f}( m_i \otimes_{\rbeta} n_i)
=  \sum_i \widetilde{f}(\iota( m_i, n_i))
= \sum_i f(m_i, n_i).
$$
So $\widetilde{f}$ is uniquely determined by $f$.
\end{proof}

\begin{prop}
Let $(R,P)$ be a Rota-Baxter algebra of weight $\lambda$.
\begin{enumerate}
\item If $(M_{\rbeta},p_M)$ is a right $\rbeta$-module,
there is an additive functor $F_{M}: {\bf _{\rbeta}Mod} \longrightarrow {\bf Ab}$
defined by
$$F_{M}(N) = M \otr N , \quad F_M(g) = id_M \otr g,$$
where $(N,p_N), (L,p_{L}) \in {\bf _{\rbeta}Mod}$ and $g:(N,p_N) \longrightarrow (L,p_{L})$ is a left $(R,P)$-module homomorphism.
\mlabel{it:tc1}

\item If $( _{\rbeta}M,p_M)$ is a left Rota-Baxter module,
there is an additive functor $G_{M}: {\bf Mod_{\rbeta}} \longrightarrow {\bf Ab}$
defined by
$$G_{M}(N) = N \otr M, \quad G_M(g) = g \otr id_M,$$
where $(N,p_N), (L,p_{L}) \in {\bf Mod_{\rbeta}}$ and $g:(N,p_N) \longrightarrow (L,p_{L})$ is a right $(R, P)$-module homomorphism.
\mlabel{it:tc2}
\end{enumerate}
\mlabel{prop:tc}
\end{prop}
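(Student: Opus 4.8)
The plan is to establish both items by the same argument, so I will concentrate on Item~(\mref{it:tc1}); Item~(\mref{it:tc2}) then follows by the evident left--right symmetry. The action of $F_M$ on objects is already in hand: for each $(N,p_N)\in {\bf _{\rbeta}Mod}$ the abelian group $M\otr N$ exists by Theorem~\mref{thm:rbmt}. The real content is to produce $F_M(g)=\id_M\otr g$ as a well-defined abelian group homomorphism and then to verify the functor and additivity axioms. Throughout I would exploit that $M\otr N=F/I$ is generated by the pure tensors $m\otr n$, so that any identity between homomorphisms out of $M\otr N$ only needs to be checked on pure tensors.

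For a left $\rbeta$-module homomorphism $g:(N,p_N)\to (L,p_L)$, I would construct $\id_M\otr g$ via the universal property of Definition~\mref{def:rbmt}. Consider the map
$$ \Phi: M\times N\longrightarrow M\otr L, \quad (m,n)\longmapsto m\otr g(n). $$
First I would check that $\Phi$ is $\rbeta$-bilinear. Biadditivity in each slot and the balancing relation $\Phi(mr,n)=\Phi(m,rn)$ follow at once from the corresponding relations in $M\otr L$ together with the $R$-linearity of $g$. The one condition that genuinely uses the Rota-Baxter structure is the last:
$$ \Phi(p_M(m),n)=p_M(m)\otr g(n)=m\otr p_L(g(n))=m\otr g(p_N(n))=\Phi(m,p_N(n)), $$
where the second equality is the extra defining relation $(p_M(m),n)-(m,p_N(n))$ of the Rota-Baxter tensor product and the third is precisely the intertwining property $p_L\circ g=g\circ p_N$ of a left $\rbeta$-module homomorphism. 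By the universal property this yields a unique abelian group homomorphism $\id_M\otr g: M\otr N\to M\otr L$ with $(\id_M\otr g)(m\otr n)=m\otr g(n)$.

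It then remains to verify the functor and additivity axioms, each of which I would check on pure tensors. For the identity morphism, $(\id_M\otr \id_N)(m\otr n)=m\otr n$, so $F_M(\id_N)=\id_{M\otr N}$. For composable homomorphisms $g:(N,p_N)\to (L,p_L)$ and $h:(L,p_L)\to (K,p_K)$, both $F_M(h\circ g)$ and $F_M(h)\circ F_M(g)$ send $m\otr n$ to $m\otr h(g(n))$, hence agree. Finally, for $g,g':(N,p_N)\to (L,p_L)$, additivity in the second tensor slot gives
$$ (\id_M\otr(g+g'))(m\otr n)=m\otr(g(n)+g'(n))=m\otr g(n)+m\otr g'(n), $$
so that $F_M(g+g')=F_M(g)+F_M(g')$ and $F_M$ is additive.

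I expect the only non-routine step to be the well-definedness of $\id_M\otr g$: it is exactly the Rota-Baxter-specific tensor relation that forces the use of the operator-intertwining condition $p_L\circ g=g\circ p_N$, which is where the functor sees the Rota-Baxter structure rather than merely the underlying $R$-module structure. The remaining functoriality and additivity claims are then formal consequences of reducing to generators.
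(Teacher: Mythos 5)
Your proposal is correct and takes essentially the same approach as the paper: the functoriality (identity and composition) and additivity of $F_M$ are verified by evaluating on the pure tensors $m\otr n$, which generate $M\otr N$, with Item~(\mref{it:tc2}) following by symmetry. If anything, your write-up is more complete than the paper's proof, which nowhere checks that $\id_M\otr g$ is well defined; your construction of it from the universal property of Theorem~\mref{thm:rbmt}, using the defining relation $(p_M(m),n)-(m,p_N(n))$ together with the intertwining condition $g\circ p_N = p_L\circ g$, supplies exactly the Rota-Baxter-specific step that the paper leaves implicit.
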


\begin{proof}
(\mref{it:tc1}) Let $g':(L,p_L) \longrightarrow (H,p_{H})$ be a left $(R, P)$-module homomorphism with $(L,p_L), (H,p_{H}) \in {\bf Mod_{\rbeta}}$. Then
$$F_M(g\circ g') = id_M \otr (g\circ g') = (id_M \otr g)\circ(id_M \otr g') = F_M(g)\circ F_M(g').  $$
Since $F_M(id_N) = id_M \otr id_N$, $F_M$ is a functor. We are left to show $$F_M(g+h) = F_M(g) + F_M(h),$$
where $g, h: N \longrightarrow L$ are left $(R,P)$-module homomorphism. Let $m\otr n\in M\otr N$. Then
\begin{align*}
F_M(g+h)(m\otr n) &= m \otr ((g+h)(n)) \\
&= m \otr (g(n) + h(n)) \\
&= m\otr g(n) + m \otr h(n) \\
&= ( F_M(g) + F_M(h)) (m\otr n),
\end{align*}
as required.

(\mref{it:tc2}) The proof is similar to Item~(\mref{it:tc1}).
\end{proof}

\begin{prop}({\bf Extension of scalars})\label{TP-m}
Let $(R,P)$ and $(S,\alpha)$ be Rota-Baxter algebras of weight $\lambda$.
\begin{enumerate}
\item If $(_{(S,\alpha)} M_{\rbeta}, p_M^S, p_M^R)$ is a Rota-Baxter bimodule and $(_{\rbeta}N, p_N^R)$ is a left $\rbeta$-module,
then $(M \otimes_{\rbeta}N,q)$ is a left $(S,\alpha)$-module by defining
\begin{align*}
s(m\ot_{\rbeta}n):&= (sm) \ot_{\rbeta} n,\\
q(m\otr n):&= p_M^S(m) \otr n, \quad {\text where }\ s\in S, m\in M, n\in N.
\end{align*}
\mlabel{it:es1}
\item If $(M_{\rbeta}, p_M^R)$ is a right $\rbeta$-module and $(_{\rbeta}N_{(S,\alpha)}, p_N^R, p_N^S)$ is a  Rota-Baxter bimodule,
then $(M \otimes_{\rbeta}N,q)$ is a right $(S,\alpha)$-module by defining
\begin{align*}
(m\ot_{\rbeta}n)s:&= m \ot_{\rbeta} (ns),\\
q(m\otr n):&= m \otr p_N^S(n), \quad {\text where }\ s\in S, m\in M, n\in N.
\end{align*}
\mlabel{it:es2}
\end{enumerate}
\mlabel{prop:es}
\end{prop}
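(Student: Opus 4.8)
The plan is to establish item~(\mref{it:es1}); item~(\mref{it:es2}) then follows by the mirror-image argument with the roles of left and right interchanged. Throughout I work with the explicit model $M\otr N = F/I$ from Theorem~\mref{thm:rbmt}, so that every element is a finite sum of pure tensors $m\otr n$ and the only relations to respect are additivity in each slot, the balancing relation $mr\otr n = m\otr rn$ for $r\in R$, and the operator relation $p_M^R(m)\otr n = m\otr p_N^R(n)$.

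First I would construct the $S$-action and the operator $q$ through the universal property of Definition~\mref{def:rbmt}. For a fixed $s\in S$, consider the map $M\times N\to M\otr N$, $(m,n)\mapsto (sm)\otr n$, and check that it is $(R,P)$-bilinear. Additivity in each slot is immediate, and the $R$-balancing uses associativity of the $S$-$R$-bimodule structure, $s(mr)=(sm)r$. The one genuinely delicate point is the operator relation: I must verify $(s\,p_M^R(m))\otr n = (sm)\otr p_N^R(n)$, and this is exactly where the bimodule compatibility $p_M^R(sm)=s\,p_M^R(m)$ from Eq.~(\mref{eq:rbbim}) enters, rewriting the left side as $p_M^R(sm)\otr n$, which equals $(sm)\otr p_N^R(n)$ by the defining relation of the tensor product. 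The universal property then yields a unique endomorphism of $M\otr N$ sending $m\otr n$ to $(sm)\otr n$, and the $S$-module axioms follow at once from those for the left $S$-module $M$. The same scheme handles $q$: the map $(m,n)\mapsto p_M^S(m)\otr n$ is additive and $\bfk$-linear in the obvious way, its $R$-balancing uses $p_M^S(mr)=p_M^S(m)r$, and its operator relation uses the commutation $p_M^S(p_M^R(m))=p_M^R(p_M^S(m))$, again from Eq.~(\mref{eq:rbbim}). This produces a well-defined $\bfk$-linear $q$ with $q(m\otr n)=p_M^S(m)\otr n$.

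It remains to verify that $(M\otr N,q)$ satisfies the left Rota-Baxter module axiom Eq.~(\mref{eq:lrbm}) over $(S,\alpha)$, namely $\alpha(s)q(w)=q(\alpha(s)w)+q(s\,q(w))+\lambda q(sw)$. By additivity it suffices to check this on a pure tensor $w=m\otr n$. Expanding the left-hand side gives $(\alpha(s)p_M^S(m))\otr n$, and here I would invoke the fact that $(M,p_M^S)$ is itself a left $(S,\alpha)$-module, so $\alpha(s)p_M^S(m)$ equals $p_M^S(\alpha(s)m)+p_M^S(s\,p_M^S(m))+\lambda p_M^S(sm)$. Distributing $\otr n$ across this sum reproduces term-by-term the three summands of the right-hand side after unwinding the definitions of the $S$-action and of $q$; the identity therefore reduces to the module axiom for $M$ tensored in the left slot with $n$.

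I do not expect any serious obstacle beyond bookkeeping: the entire argument is driven by the three bimodule identities of Eq.~(\mref{eq:rbbim}), each used essentially once (the first for well-definedness of the $S$-action, the second and third for well-definedness of $q$), together with the left-module axiom for $(M,p_M^S)$ for the final Rota-Baxter relation. The step most likely to require care is confirming that the candidate $S$-action and $q$ respect the fourth generating relation $p_M^R(m)\otr n = m\otr p_N^R(n)$ of the tensor product, since this is the one relation mixing the two Rota-Baxter operators and is precisely where the bimodule compatibility is indispensable. For item~(\mref{it:es2}) I would run the symmetric argument, defining the right $S$-action by $(m\otr n)s=m\otr(ns)$ and $q$ by $q(m\otr n)=m\otr p_N^S(n)$, and close it using the right Rota-Baxter module axiom Eq.~(\mref{eq:rrbm}) for $(N,p_N^S)$.
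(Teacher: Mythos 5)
Your proposal is correct and takes essentially the same route as the paper: the heart of both arguments is checking Eq.~(\mref{eq:lrbm}) on a pure tensor by expanding $\alpha(s)q(m\otr n)=(\alpha(s)p_M^S(m))\otr n$ via the left $(S,\alpha)$-module axiom for $(M,p_M^S)$ and then distributing $\otr n$, with item~(\mref{it:es2}) handled by the symmetric argument. The only difference is that you spell out the well-definedness of the $S$-action and of $q$ through the universal property and the bimodule identities of Eq.~(\mref{eq:rbbim}), a step the paper compresses into ``it is straightforward to check that $M\otimes_{\rbeta}N$ is a left $S$-module.''
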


\begin{proof}
(\mref{it:es1}) It is straightforward to check that $M \otimes_{\rbeta}N$ is a left $S$-module.
So we are left to verify Eq.~(\mref{eq:lrbm}). Let $s\in S, m\in M, n\in N$. Then
\begin{align*}
\alpha(s) q(m\ot_{\rbeta} n ) &=  \alpha(s)( p_{M}^S(m) \otr n)\\
&= (\alpha(s) p_M^S(m)) \otr n\\
&= p_M^S(\alpha(s)m) \otr n + p_M^S( sp_M^S(m)) \otr n + \lambda p_M^S(sm) \otr n\\
&= q((\alpha(s)m) \otr n) + q(sp_M^S(m) \otr n) + \lambda q(sm \otr n)\\
&= q(\alpha(s) (m\otr n)) + q(s(p_M^S(m) \otr n)) + \lambda q(s(m\otr n))\\
&= q(\alpha(s) (m\otr n)) + q(s(q(m \otr n)) ) + \lambda q(s(m\otr n)),
\end{align*}
as required.

(\mref{it:es2}) The proof is similar to Item~(\mref{it:es1}).
\end{proof}

The next result shows that $\square \otr N$ and $\Hom_{\salpha}(N,\square)$ are adjoint functors.

\begin{theorem}
Let $(R,P)$ and $(S,\alpha)$ be Rota-Baxter algebras of weight $\lambda$.
Let $(M_{\rbeta},p_M^R)$ be a right $\rbeta$-module, $( _{\rbeta}N_{\salpha},p_N^R, p_N^S)$ a
Rota-Baxter bimodule and $(L_{\salpha},p_L^S)$ a right $\salpha$-module. Then

$$ \Hom_{\salpha}(M\otr N,L) \cong \Hom_{\rbeta}(M,\Hom_{\salpha}(N,L)).$$
\end{theorem}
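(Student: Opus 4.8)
The plan is to produce the classical tensor--hom adjunction bijection and then verify that it is compatible with all the Rota--Baxter data, the one genuinely new point being that the defining relation $p_M^R(m)\otr n = m\otr p_N^R(n)$ of the Rota--Baxter tensor product corresponds exactly to the operator-compatibility condition built into a Rota--Baxter module homomorphism. First I would pin down the structures furnished by the earlier results, so that both sides are well-defined abelian groups. By Proposition~\mref{prop:es}(\mref{it:es2}), $M\otr N$ is a right $\salpha$-module with right action $(m\otr n)s = m\otr(ns)$ and operator $q(m\otr n)=m\otr p_N^S(n)$, so $\Hom_{\salpha}(M\otr N,L)$ makes sense. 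By Proposition~\mref{prop:bihom}(\mref{it:bihom2}), $\Hom_{\salpha}(N,L)$ is a right $\rbeta$-module, with right action $(fr)(n)=f(rn)$ and operator $q_H$ given by $(q_H(f))(n)=f(p_N^R(n))$, so $\Hom_{\rbeta}(M,\Hom_{\salpha}(N,L))$ also makes sense.

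Next I would define the two candidate maps. Given $\Phi\in\Hom_{\salpha}(M\otr N,L)$, set $\Phi^\flat(m)(n):=\Phi(m\otr n)$. Conversely, given $\Psi\in\Hom_{\rbeta}(M,\Hom_{\salpha}(N,L))$, the assignment $(m,n)\mapsto\Psi(m)(n)$ is $\rbeta$-bilinear in the sense of Definition~\mref{def:rbmt}, and hence by the universal property realized in Theorem~\mref{thm:rbmt} it induces a unique abelian group homomorphism $\Psi^\sharp$ with $\Psi^\sharp(m\otr n)=\Psi(m)(n)$. In each direction the required verifications split into a right-module part, which is routine and identical to the usual tensor--hom argument (using the relation $mr\otr n=m\otr rn$ and the actions recorded above), and an operator part, which is where the Rota--Baxter structure genuinely enters.

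For the operator part in the forward direction, the fact that $\Phi$ commutes with the operators gives $\Phi^\flat(m)(p_N^S(n))=p_L^S(\Phi^\flat(m)(n))$, so each $\Phi^\flat(m)$ is a bona fide $\salpha$-module homomorphism; and the tensor relation $p_M^R(m)\otr n=m\otr p_N^R(n)$ yields $\Phi^\flat(p_M^R(m))=q_H(\Phi^\flat(m))$, so $\Phi^\flat$ is an $\rbeta$-module homomorphism. In the backward direction, the fourth bilinearity condition $f(p_M(m),n)=f(m,p_N(n))$ needed to apply Theorem~\mref{thm:rbmt} to $\Psi$ is precisely the statement that $\Psi$ commutes with the operators, and the compatibility of $\Psi^\sharp$ with $q$ and $p_L^S$ follows from each $\Psi(m)$ being $\salpha$-linear.

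Finally I would note that $(\Phi^\flat)^\sharp=\Phi$ and $(\Psi^\sharp)^\flat=\Psi$ hold on pure tensors and on elements respectively, hence everywhere by additivity, and that both assignments are additive, so the correspondence is an isomorphism of abelian groups. I expect the main obstacle to be purely organizational: keeping straight at each step which operator among $p_M^R$, $p_N^R$, $p_N^S$, $p_L^S$, $q$, $q_H$ is in play and over which side of which algebra, and confirming that the two distinct operator-compatibility conditions translate into one another through the single tensor relation $p_M^R(m)\otr n=m\otr p_N^R(n)$. The $R$- and $S$-linear parts require nothing beyond the standard adjunction.
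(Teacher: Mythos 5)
Your proposal is correct and follows essentially the same route as the paper: the paper simply defines the adjunction map $\tau(f)(m):n\mapsto f(m\otr n)$ (your $\Phi\mapsto\Phi^\flat$) and asserts it is the required isomorphism, leaving all verifications implicit. Your write-up supplies exactly the missing details — the module structures from Propositions~\ref{prop:es} and~\ref{prop:bihom}, the inverse via the universal property of Theorem~\ref{thm:rbmt}, and the observation that the tensor relation $p_M^R(m)\otr n=m\otr p_N^R(n)$ encodes the operator-compatibility of homomorphisms — so it is a fuller version of the paper's argument, not a different one.
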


\begin{proof}
Define
\begin{align*}
\tau:  \Hom_{\salpha}(M\otr N,L) &\longrightarrow \Hom_{\rbeta}(M,\Hom_{\salpha}(N,L)),\\
f &\longmapsto \tau(f), \text{ where } \tau(f)(m): n\longmapsto f(m\otr n).
\end{align*}
Then $\tau$ is the required isomorphism.
\end{proof}

\subsection{Flat Rota-Baxter modules}
As in the classical case, it is quite routine to check that the Rota-Baxter tensor product is right exact. To study the exactness of the tensor product, we introduce the flatness condition in the context of Rota-Baxter modules.

\begin{defn}
Let $\rbeta$ be an Rota-Baxter algebra of weight $\lambda$. A right $\rbeta$-module $(M,p)$ is $\bf flat$ if $M\otimes _{\rbeta} \Box$ is an exact functor, that is, whenever
$$
\xymatrix{
0\ar[r]&(N',p_{N'})\ar^{i}[r]&(N,p_N)\ar^{j}[r]&(N'',p_{N''})\ar[r]& 0
}
$$
is an exact sequence of left $\rbeta$-modules, then
$$
\xymatrix{
0\ar[r]& M\otimes_{\rbeta}N'\ar^{id_{M}\otimes i}[r]& M\otimes_{\rbeta} N\ar^{id_{M}\otimes j~~}[r]& M\otimes_{\rbeta} N''\ar[r]& 0&}
$$
is an exact sequence of abelian groups.
\end{defn}

Since the functors $M\otimes _{\rbeta} \Box$ are right exact, we see that a right $\rbeta$-module $(M,p_{M})$ is flat if and only if, whenever $i: (N',p_{N'})\longrightarrow (N,p_{N'})$ is an injection, then $id_{M}\otimes i: (M\otimes_{\rbeta}N',p')\longrightarrow (M\otimes_{\rbeta}N,p)$ is also an injection.

\begin{theorem}\label{flat-iso}
Let $\rbeta$ be a Rota-Baxter algebra of weight $\lambda$ and $(M,p)$ a right $\rbeta$-module. Suppose the inclusion $R\to R_{RB}\langle Q\rangle$ gives an injective $(R,P)$-module homomorphism $\eta: (R,P)\longrightarrow (R_{RB}\langle Q \rangle,\bar{P})$. If $(M,p)$ is a flat $(R,P)$-module, then $M\otimes_{(R,P)}R\cong M$ as right $R$-modules.
\end{theorem}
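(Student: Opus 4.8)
The plan is to factor the desired isomorphism through the ring of Rota-Baxter operators $\rbo$, using the flatness hypothesis only to supply a single injectivity. First I would record that, by hypothesis, $\eta\colon(R,P)\to(\rbo,\bar P)$ is a monomorphism in ${\bf _{\rbeta}Mod}$, so applying the additive functor $M\otr\Box$ of Proposition~\ref{prop:tc} to $\eta$ yields a homomorphism of abelian groups $\id_M\ot\eta\colon M\otr R\to M\otr\rbo$; since $(M,p)$ is flat, this map is injective. (Equivalently, the hypothesis that $\eta$ is an operated-module map means $\bar P(r)=Qr=P(r)\in R$, so $\eta(R)$ is an $\rbeta$-submodule of $(\rbo,\bar P)$ and flatness applied to $0\to R\to\rbo\to\rbo/R\to 0$ gives the same injection.)

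The heart of the argument is the identification $M\otr\rbo\cong M$. In exact analogy with the left-module correspondence of Proposition~\ref{pp:corr}, the right $\rbeta$-module $(M,p)$ becomes a right $\rbo$-module by setting $m\cdot Q:=p(m)$: expanding the defining relation $QrQ=P(r)Q-QP(r)-\lambda Qr$ of $\rbo$ acting on the right of $M$ reproduces exactly the right Rota-Baxter axiom Eq.~(\ref{eq:rrbm}), so the action is well defined. Now in the construction of $M\otr\rbo$ the relation $(mr)\ot x=m\ot(rx)$ is precisely balancing over $R$, while the Rota-Baxter relation $p(m)\ot x=m\ot\bar P(x)=m\ot Qx$ is precisely balancing over $Q$; since $R$ and $Q$ generate $\rbo$ as a $\bfk$-algebra, these together amount to full $\rbo$-balancing. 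Hence $M\otr\rbo\cong M\ot_{\rbo}\rbo\cong M$ through the well-defined map $\nu\colon m\ot x\mapsto m\cdot x$, whose inverse is $m\mapsto m\ot\bfone_{\rbo}$.

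Finally I would compose, setting $\mu:=\nu\circ(\id_M\ot\eta)\colon M\otr R\to M$. It sends $m\ot r$ to $m\cdot\eta(r)=mr$, i.e. it is the natural evaluation map. As the composite of the injection furnished by flatness with the isomorphism $\nu$, the map $\mu$ is injective, and it is visibly surjective since $\mu(m\ot\bfone_R)=m$; thus $\mu$ is an isomorphism of abelian groups. To promote it to an isomorphism of right $R$-modules I would transport the $R$-action along $\mu$ (equivalently, observe that the image of $M\otr R$ inside $M\otr\rbo\cong M\ot_{\rbo}\rbo$ is the right $R$-submodule closed under $m\ot r\mapsto m\ot(rr')$), which is legitimate precisely because $\mu$ is already known to be bijective. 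The step I expect to be the main obstacle is the second paragraph: checking carefully that $m\cdot Q:=p(m)$ defines a genuine right $\rbo$-module structure and that the two tensor relations assemble into honest $\rbo$-balancing, so that $\nu$ is well defined and bijective; once this is in place, flatness has only the light task of providing the one injectivity in the first paragraph.
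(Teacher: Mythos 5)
Your proposal is correct and takes essentially the same route as the paper: flatness is used only to make $\id_M\ot\eta$ injective, the real work is the identification $M\otr\rbo\cong M\ot_{\rbo}\rbo\cong M$ via the induced right $\rbo$-action $m\cdot Q:=p(m)$, and surjectivity comes from $m\ot x=mx\ot\bfone$. If anything, you are more explicit than the paper about the two points it glosses over, namely that $m\cdot Q:=p(m)$ is a well-defined right $\rbo$-action (this is exactly Eq.~(\ref{eq:rrbm})) and that the $R$-balancing and $p$-balancing relations together generate full $\rbo$-balancing.
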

\begin{proof}
Since $(M,p)$ is flat, the abelian group homomorphism $id_{M}\otimes_{(R,P)} \eta: M\otimes_{(R,P)}R\longrightarrow M\otimes_{(R,P)}R_{RB}\langle Q \rangle$ is injective.
Let $m\otimes_{R_{RB}\langle Q \rangle}x$ be a pure tensor in $M\otimes_{R_{RB}\langle Q \rangle}R_{RB}\langle Q \rangle$. Then $$(id_{M}\otimes_{(R,P)} \eta)(mx\otimes_{(R,P)}\bfone_{R})=mx\otimes_{(R,P)}\bfone_{R_{RB}\langle Q \rangle}=m\otimes_{R_{RB}\langle Q \rangle}x.$$
Thus $id_{M}\otimes_{(R,P)} \eta$ is surjective and so is an abelian group isomorphism.
By the extension of scalars in Proposition~\mref{prop:es}, $M\otimes_{(R,P)}R$ and $M\otimes_{(R,P)}R_{RB}\langle Q \rangle$ are right $R$-modules. For any $m\otimes r\in M\otimes_{(R, P)} R$ and $r'\in R$, we have
$$ (id_M \otimes \eta) ((m\otimes r')r) = (id_M \otimes \eta) (m\otimes r' r) = m \otimes \eta(r' r) = m \otimes \eta(r') r
= ((id_M \otimes \eta) (m\otimes r')) r $$
and so $id_{M}\otimes_{(R,P)} \eta$ is an isomorphism of right $R$-modules.
Furthermore regard $(M,p)$ and $(R_{RB}\langle Q \rangle,\bar{P})$ as $R_{RB}\langle Q \rangle$-modules by Proposition~\ref{pp:corr}, we have $M\otimes_{(R,P)}R_{RB}\langle Q \rangle=M\otimes_{R_{RB}\langle Q \rangle}R_{RB}\langle Q \rangle\cong M$ as right $R_{RB}\langle Q \rangle$-modules and also
 as $R$-modules.
Hence $M\otimes_{(R,P)}R\cong M\otimes_{(R,P)}R_{RB}\langle Q \rangle\cong M$ as right $R$-modules.
\end{proof}

Now we give an example of a Rota-Baxter algebra satisfying the conditions in Theorem~\mref{flat-iso}.

\begin{exam}
Let $\rbeta$ be a Rota-Baxter algebra of weight $\lambda$ with $P(r)=-\lambda r$ as in Proposition \mref{pp:rbbm}. Then by Definition \mref{rbor} we have
$$QrQ - P(r)Q +QP(r) +\lambda Qr = QrQ +\lambda rQ - \lambda Qr +\lambda Qr = QrQ +\lambda rQ= (Q+\lambda)rQ = 0$$
and so $Q = -\lambda $ in $R_{RB}\langle Q\rangle$. Hence for the $\eta$ in Theorem~\mref{flat-iso},
we get
$$(\eta\circ P)(r)=\eta(P(r))= \eta(-\lambda r) = -\lambda r = Qr = Q\eta(r) = (\bar{P}\circ \eta)(r)$$
for $r\in R$ and so $\eta$ is an injective $(R,P)$-module homomorphism.
\end{exam}

Let $\{(M_{i},p_{i})~|~i \in I \}$ be a family of left $\rbeta$-modules. Then $\Big(\bigoplus_{i\in I}M_{i},\bigoplus_{i\in I}p_{i}\Big),$
where $\bigoplus_{i \in I} p_{i}$ is defined by
$$
\Big(\bigoplus_{i\in I}p_{i}\Big)(m_{i})_{I} = (p_{i}(m_{i}))_{I},
$$
is also a left $\rbeta$-module and is called the direct sum of $\{(M_{i},p_{i})~|~i \in I \}$.
It is easy to see
$$
\bigoplus_{i\in I}(M_{i},p_{i})=\Big(\bigoplus_{i\in I}M_{i},\bigoplus_{i\in I}p_{i}\Big).
$$
For each $i\in I$, the map $\iota_{i}: (M_{i},p_{i})\longrightarrow \bigoplus_{i\in I}(M_{i},p_{i})$ is a monomorphism and satisfies $(\bigoplus_{i\in I} p_{i})\circ \iota_{i}=\iota_{i}\circ p_{i}$. The map $\rho_{i}: \bigoplus_{i\in I}(M_{i}, p_{i})\longrightarrow (M_{i},p_{i})$ is an epimorphism and satisfies $p_{i}\circ \rho_{i}=\rho_{i}\circ (\bigoplus_{i\in I} p_{i})$. Further $\iota_{i}\circ \rho_{i}=id_{\oplus_{i\in I}(M_{i},p_{i})}$, and $\rho_{i}\circ \iota_{i}=id_{(M_{i}, p_{i})}$.

\begin{lemma}\label{dir-sum-injective}
Let $\rbeta$ be a Rota-Baxter {\bf k}-algebra of weight $\lambda$, and $\{(M_{i},p_{i})~|~i \in I \}$, $\{(N_{i},q_{i})~|~i \in I \}$ be two families of $\rbeta$-modules. Let $\varphi_i: (M_{i},p_{i})\longrightarrow (N_{i},q_{i})$ be $(R,P)$-module homomorphisms.
Then the $\rbeta$-module homomorphism
$$\varphi:= \bigoplus_{i\in I}\varphi_i: \bigoplus_{i\in I}(M_{i},p_{i})\longrightarrow \bigoplus_{i\in I}(N_{i},q_{i}), \quad
(m_{i})_{I} \longmapsto (\varphi_{i}(m_{i}))_{I},
$$
is injective if and only if each $\rbeta$-module homomorphism $\varphi_{i}: (M_{i},p_{i})\longrightarrow (N_{i},q_{i})$ is injective.
\end{lemma}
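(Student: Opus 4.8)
The plan is to reduce this statement about direct sums of Rota-Baxter modules to the corresponding well-known fact for ordinary modules, since the map $\varphi = \bigoplus_{i\in I}\varphi_i$ is built coordinatewise and injectivity is a set-theoretic condition that does not directly involve the Rota-Baxter operators. First I would observe that $\varphi$ is indeed an $\rbeta$-module homomorphism: it is $R$-linear as an ordinary direct sum of $R$-linear maps, and it commutes with the operators because for $(m_i)_I$ we have
\[
\varphi\Big(\big(\textstyle\bigoplus_{i\in I}p_i\big)(m_i)_I\Big) = \varphi\big((p_i(m_i))_I\big) = (\varphi_i(p_i(m_i)))_I = (q_i(\varphi_i(m_i)))_I = \big(\textstyle\bigoplus_{i\in I}q_i\big)(\varphi_i(m_i))_I,
\]
using that each $\varphi_i$ satisfies $\varphi_i\circ p_i = q_i\circ \varphi_i$.

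For the equivalence itself, I would argue each direction separately. For the forward direction, assume $\varphi$ is injective. Fix an index $j\in I$ and use the canonical inclusion $\iota_j$ and projection $\rho_j$ described just before the lemma, which satisfy $\rho_j\circ\iota_j = \id$. Given $m_j\in M_j$ with $\varphi_j(m_j)=0$, the element $\iota_j(m_j)$ maps under $\varphi$ to an element whose only possibly nonzero coordinate is $\varphi_j(m_j)=0$, hence $\varphi(\iota_j(m_j))=0$; injectivity of $\varphi$ then forces $\iota_j(m_j)=0$, and applying $\rho_j$ gives $m_j=0$. For the converse, assume every $\varphi_i$ is injective, take $(m_i)_I$ in the kernel of $\varphi$, so $(\varphi_i(m_i))_I = 0$, which means $\varphi_i(m_i)=0$ for each $i$; injectivity of each $\varphi_i$ gives $m_i=0$ for all $i$, hence $(m_i)_I=0$.

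I do not expect any genuine obstacle here, since the argument is the standard one for direct sums of module maps and the Rota-Baxter structure only needs to be carried along passively. The one point deserving a line of care is confirming at the outset that $\varphi$ actually lands in the category of Rota-Baxter modules, that is, that it commutes with $\bigoplus_i p_i$ and $\bigoplus_i q_i$; this is exactly the compatibility displayed above and follows immediately from the coordinatewise definition of the operators on the direct sum together with the intertwining property of each $\varphi_i$. Everything else reduces to the elementary observation that a kernel of a coordinatewise map is trivial precisely when each coordinate map has trivial kernel.
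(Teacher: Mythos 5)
Your argument is correct and is essentially the paper's proof in expanded form: the paper simply notes that $\ker \varphi = \oplus_{i\in I} \ker \varphi_i$, which is exactly the coordinatewise kernel computation you carry out elementwise in both directions. Your additional check that $\varphi$ intertwines the operators $\oplus_i p_i$ and $\oplus_i q_i$ is a harmless (and reasonable) verification that the paper leaves implicit.
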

\begin{proof}
This follows from $\ker \varphi = \oplus_{i\in I} \ker \varphi_i$.
\end{proof}

\begin{lemma}\label{dir-otim-iso}
Let $\rbeta$ be a Rota-Baxter {\bf k}-algebra of weight $\lambda$, $\{(M_{i},p_{i})~|~i \in I \}$ be a family of left $\rbeta$-modules, and $(L, p)$ be a right $\rbeta$-module. Then $L\otimes_{\rbeta}(\oplus_{i\in I}M_{i})\cong \oplus_{i\in I}(L\otimes_{\rbeta}M_{i})$.
\end{lemma}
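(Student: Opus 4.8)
The plan is to exhibit mutually inverse abelian group homomorphisms between the two sides, using the universal property of the Rota-Baxter tensor product (Definition~\mref{def:rbmt}, Theorem~\mref{thm:rbmt}) to build the forward map and the universal property of the direct sum to build the backward map. Since $L$ is a right module and the $M_i$ are left modules, both sides are bare abelian groups, so it suffices to produce an isomorphism of abelian groups.

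First I would construct $\Phi\colon L\otr(\bigoplus_{i\in I}M_i)\longrightarrow \bigoplus_{i\in I}(L\otr M_i)$. Consider the map $\beta\colon L\times(\bigoplus_{i\in I}M_i)\to\bigoplus_{i\in I}(L\otr M_i)$ sending $(\ell,(m_i)_I)$ to $(\ell\otr m_i)_I$; as $(m_i)_I$ has finite support, so does $(\ell\otr m_i)_I$, so $\beta$ lands in the direct sum. The substantive task is to verify that $\beta$ is $\rbeta$-bilinear in the sense of Definition~\mref{def:rbmt}. Biadditivity in each variable and the relation $\beta(\ell r,(m_i)_I)=\beta(\ell,(rm_i)_I)$ hold coordinatewise from the corresponding relations in each $L\otr M_i$. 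The genuinely Rota-Baxter condition is
$$\beta(p_L(\ell),(m_i)_I)=\beta\bigl(\ell,(\textstyle\bigoplus_{i\in I} p_i)((m_i)_I)\bigr),$$
which reduces, coordinate by coordinate, to the identity $p_L(\ell)\otr m_i=\ell\otr p_i(m_i)$ holding in each $L\otr M_i$ by the last defining relation of the tensor product. Granting bilinearity, the universal property of $L\otr(\bigoplus_{i\in I}M_i)$ yields a unique abelian group homomorphism $\Phi$ with $\Phi(\ell\otr(m_i)_I)=(\ell\otr m_i)_I$.

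Next I would construct the inverse $\Psi$. For each $i\in I$ the inclusion $\iota_i\colon(M_i,p_i)\to\bigoplus_{j\in I}(M_j,p_j)$ is a left $\rbeta$-module homomorphism, as recorded in the paragraph preceding the lemma. Hence by the functoriality of $L\otr\Box$ (Proposition~\mref{prop:tc}) we obtain abelian group homomorphisms $\mathrm{id}_L\otr\iota_i\colon L\otr M_i\to L\otr(\bigoplus_{j\in I}M_j)$. By the universal property of the direct sum of abelian groups these assemble into a single homomorphism $\Psi\colon\bigoplus_{i\in I}(L\otr M_i)\to L\otr(\bigoplus_{j\in I}M_j)$, determined on the $i$-th summand by $\ell\otr m_i\mapsto \ell\otr\iota_i(m_i)$.

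Finally I would check that $\Phi$ and $\Psi$ are mutually inverse, which it suffices to do on pure tensors since these generate both groups. On the one hand, $\Psi\Phi(\ell\otr(m_i)_I)=\Psi((\ell\otr m_i)_I)=\sum_{i}\ell\otr\iota_i(m_i)=\ell\otr\sum_{i}\iota_i(m_i)=\ell\otr(m_i)_I$; on the other hand, $\Phi\Psi$ applied to a generator $\ell\otr m_i$ in the $i$-th summand returns it unchanged, since $\iota_i(m_i)$ has $m_i$ in coordinate $i$ and zero elsewhere and $\ell\otr 0=0$. Both composites are the identity on generators, hence everywhere, so $\Phi$ is the desired isomorphism. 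I expect the only real subtlety to be the verification of the Rota-Baxter bilinearity condition for $\beta$ in the first step; everything else is the standard ``tensor commutes with direct sums'' bookkeeping, carried out at the level of abelian groups.
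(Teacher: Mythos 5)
Your proposal is correct and follows essentially the same route as the paper: both proofs exhibit the same pair of mutually inverse abelian group homomorphisms between $L\otr(\bigoplus_{i\in I}M_i)$ and $\bigoplus_{i\in I}(L\otr M_i)$ and check the composites on generators. In fact your write-up is more careful than the paper's own: you justify the forward map via the universal property of $\otr$ (including the Rota--Baxter compatibility $p_L(\ell)\otr m_i=\ell\otr p_i(m_i)$, which the paper never verifies), and your backward map $\Psi$, defined on the $i$-th summand by $\mathrm{id}_L\otr\iota_i$, is the correct formulation of the paper's map $g$, whose displayed formula $(\ell_i\otimes m_i)_I\mapsto\bigl(\prod_{i\in I}\ell_i\bigr)\otimes(m_i)_I$ is garbled as written (it is not additive and the product has no meaning there).
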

\begin{proof}
Define group homomorphisms
\begin{eqnarray*}
f: L\otimes_{\rbeta}(\oplus_{i\in I}M_{i}) \longrightarrow \oplus_{i\in I}(L\otimes_{\rbeta}M_{i}), \
\ell\otimes(m_{i})_{I} \longmapsto (\ell \otimes m_{i})_{I},
\end{eqnarray*}
and
\begin{eqnarray*}
g: \oplus_{i\in I}(L\otimes_{\rbeta}M_{i}) \longrightarrow L\otimes_{\rbeta}(\oplus_{i\in I}M_{i}), \
(\ell_{i}\otimes m_{i})_{I} \longmapsto (\prod_{i\in I} \ell_{i}) \otimes (m_{i})_{I}.
\end{eqnarray*}
It is easy to check that $f\circ g=id_{\oplus_{i\in I}(L\otimes_{\rbeta}M_{i})}$ and $g\circ f=id_{L\otimes_{\rbeta}(\oplus_{i\in I}M_{i})}$. Then $L\otimes_{\rbeta}(\oplus_{i\in I}M_{i})\cong \oplus_{i\in I}(L\otimes_{\rbeta}M_{i})$.
\end{proof}

\begin{prop}\label{dir-sum-flat}
Let $\rbeta$ be a Rota-Baxter {\bf k}-algebra of weight $\lambda$, and $\{(M_{i},p_{i})~|~i \in I \}$ be a family of left $\rbeta$-modules. Then the $\rbeta$-module $\bigoplus_{i\in I}(M_{i},p_{i})$ is flat if and only if each $\rbeta$-module $(M_{i},p_{i})$ is flat.
\end{prop}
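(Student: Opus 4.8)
The plan is to reduce the assertion to the two preceding lemmas, Lemma~\ref{dir-otim-iso} (tensor product commutes with direct sums) and Lemma~\ref{dir-sum-injective} (a direct sum of homomorphisms is injective if and only if each summand is), together with the right-exactness of the tensor functor recorded above. Recall that a left $\rbeta$-module $M$ is flat precisely when $\Box\otr M$ is exact, and since every such functor is right exact, this amounts to the single condition that for each monomorphism $j:(L',p_{L'})\to (L,p_L)$ of right $\rbeta$-modules the induced map $j\otr\id_M$ is injective. So it suffices to prove that $j\otr\id_{\oplus_{i} M_i}$ is injective for all such $j$ if and only if $j\otr\id_{M_i}$ is injective for all $i$ and all such $j$.

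First I would fix a monomorphism $j:(L',p_{L'})\to (L,p_L)$ of right $\rbeta$-modules and assemble the square
\[
\xymatrix{
L'\otr(\oplus_{i\in I}M_i) \ar[r]^{j\otr \id} \ar[d]_{\cong} & L\otr(\oplus_{i\in I}M_i)\ar[d]^{\cong}\\
\oplus_{i\in I}(L'\otr M_i) \ar[r]^{\oplus_i (j\otr\id)} & \oplus_{i\in I}(L\otr M_i),
}
\]
whose vertical isomorphisms are those provided by Lemma~\ref{dir-otim-iso}. The substance of this step is to check that the square commutes, that is, that under the identification $\ell\otr(m_i)_I\mapsto(\ell\otr m_i)_I$ the map $j\otr\id_{\oplus_i M_i}$ corresponds to $\oplus_i(j\otr\id_{M_i})$. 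Because the isomorphisms of Lemma~\ref{dir-otim-iso} are given explicitly on pure tensors, this reduces to evaluating both composites on a generator $\ell'\otr(m_i)_I$. Granting commutativity, $j\otr\id_{\oplus_i M_i}$ is injective if and only if $\oplus_i(j\otr\id_{M_i})$ is injective, since the vertical maps are isomorphisms.

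Next I would apply Lemma~\ref{dir-sum-injective} to the family $\{\,j\otr\id_{M_i}:L'\otr M_i\to L\otr M_i\,\}_{i\in I}$, which gives that $\oplus_i(j\otr\id_{M_i})$ is injective if and only if each $j\otr\id_{M_i}$ is injective. Chaining the two equivalences, $j\otr\id_{\oplus_i M_i}$ is injective for every monomorphism $j$ if and only if, for every $i$ and every monomorphism $j$, the map $j\otr\id_{M_i}$ is injective. The first condition is the flatness of $\bigoplus_{i\in I}(M_i,p_i)$ and the second is the flatness of every $(M_i,p_i)$, which is exactly the claim.

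The only nonroutine point I anticipate is the naturality step, namely that the isomorphisms of Lemma~\ref{dir-otim-iso} intertwine $j\otr\id_{\oplus_i M_i}$ with $\oplus_i(j\otr\id_{M_i})$; once this commuting square is in hand, the rest is a formal consequence of the two lemmas and right-exactness. Since those isomorphisms are prescribed on pure tensors, I expect the verification to be pure bookkeeping with no genuine difficulty, and one should also note in passing that $j\otr\id$ is automatically a homomorphism of $\rbeta$-modules (not merely of abelian groups) so that the equivalences take place in the category of Rota-Baxter modules.
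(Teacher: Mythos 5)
Your proposal is correct and follows essentially the same route as the paper's own proof: both reduce flatness to preservation of injectivity via right-exactness and then chain Lemma~\ref{dir-otim-iso} with Lemma~\ref{dir-sum-injective}, the only difference being that you make explicit the commuting (naturality) square that the paper leaves implicit. One small caveat: your closing remark that $j\otr\id$ is a homomorphism of $\rbeta$-modules is not accurate in this generality, since $L'\otr(\oplus_{i}M_i)$ is only an abelian group here (an $\rbeta$-module structure would require a bimodule factor as in Proposition~\ref{prop:es}); like the paper, your argument in fact takes place entirely at the level of abelian group homomorphisms, and this is all that is needed.
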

\begin{proof}
Let $(L,p_{L})$ and $(N,p_{N})$ be two right $\rbeta$-modules, and let $\theta: (L,p_{L})\longrightarrow (N,p_{N})$ be a monomorphic $\rbeta$-module homomorphism.

Suppose that each left $\rbeta$-module $(M_{i},p_{i})$ is flat. Then each group homomorphism
$$
\theta \otimes id_{M_{i}}: L\otimes_{\rbeta} M_{i}\longrightarrow N\otimes_{\rbeta} M_{i}
$$
is injective. By Lemma~\ref{dir-sum-injective}, the homomorphism
$$\bigoplus_{i\in I}(\theta \otimes id_{M_{i}}): \bigoplus_{i\in I}(L\otimes_{\rbeta} M_{i})\longrightarrow \bigoplus_{i\in I}(N\otimes_{\rbeta} M_{i})$$
is also injective. Thus the $\rbeta$-module $\bigoplus_{i\in I}(M_{i}, p_{i})$ is flat by Lemma~\ref{dir-otim-iso}.

Conversely, suppose that the left $\rbeta$-module $\bigoplus_{i\in I}(M_{i},p_{i})$ is flat. Then the group homomorphism
$$\theta \otimes id_{(\bigoplus_{i\in I}M_{i})}: L\otimes_{\rbeta} \Big(\bigoplus_{i\in I}M_{i}\Big)\longrightarrow N\otimes_{\rbeta} \Big(\bigoplus_{i\in I}M_{i}\Big)$$
is an injective map. By Lemma~\ref{dir-otim-iso}, we conclude that
$$\bigoplus_{i\in I}(L\otimes_{\rbeta} M_{i})\longrightarrow \bigoplus_{i\in I}(N\otimes_{\rbeta} M_{i})$$
is also an injective map. By Lemma~\ref{dir-sum-injective}, for each $i\in I$, the map
$$
\theta \otimes id_{M_{i}}: L\otimes_{\rbeta} M_{i}\longrightarrow N\otimes_{\rbeta} M_{i}
$$
is injective. Thus each left $\rbeta$-module $(M_{i},p_{i})$ is flat.
\end{proof}

\begin{theorem}Let $\rbeta$ be a Rota-Baxter {\bf k}-algebra of weight $\lambda$.
Every free left $\rbeta$-module is flat.
\mlabel{thm:flat}
\end{theorem}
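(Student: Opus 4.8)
The plan is to identify free Rota-Baxter modules with free modules over the ring $\rbo$ of Rota-Baxter operators, to reduce to the rank-one case by the direct-sum criterion, and then to observe that tensoring with $\rbo$ is essentially the identity functor. By Proposition~\mref{pp:corr} the category ${\bf _{\rbeta}Mod}$ of left $\rbeta$-modules is isomorphic to the category of left $\rbo$-modules, the isomorphism fixing underlying sets and sending the operator to left multiplication by $Q$. Consequently the free left $\rbeta$-module on a set $X$ of Theorem~\mref{thm:freem} represents the same $\Sets$-valued functor as the free left $\rbo$-module on $X$, so
$$F(X)\cong\bigoplus_{x\in X}(\rbo,\bar{P})$$
as left $\rbeta$-modules. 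By Proposition~\mref{dir-sum-flat} this direct sum is flat if and only if each summand $(\rbo,\bar{P})$ is flat, so it suffices to prove that $(\rbo,\bar{P})$ is a flat left Rota-Baxter module.

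To prove flatness of $(\rbo,\bar{P})$, I would show that the functor $\Box\otr\rbo$ on right $\rbeta$-modules is naturally isomorphic to the identity functor, which is exact. For any right $\rbeta$-module $(L,p_L)$, the two nontrivial $\rbeta$-bilinearity conditions in Definition~\mref{def:rbmt}, namely $lr\otr x=l\otr rx$ and $p_L(l)\otr x=l\otr\bar{P}(x)$, are precisely the $R$- and $Q$-balancing relations over the ring $\rbo$. Hence $L\otr\rbo=L\otimes_{\rbo}\rbo\cong L$, exactly as computed in the proof of Theorem~\mref{flat-iso}, and this isomorphism is natural in $L$. Since the natural isomorphism identifies $\Box\otr\rbo$ with the identity functor, it carries short exact sequences of right $\rbeta$-modules to short exact sequences; thus $\Box\otr\rbo$ is exact, $(\rbo,\bar{P})$ is flat, and the theorem follows.

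The main obstacle is the compatibility asserted in the second step: that the Rota-Baxter tensor product over $\rbeta$ literally coincides with the ordinary tensor product over the ring $\rbo$. This is exactly where the unusual form of the right-module axiom Eq.~(\mref{eq:rrbm}) becomes indispensable. One must check that assigning $l\cdot Q:=p_L(l)$ turns a right $\rbeta$-module $L$ into a genuine right $\rbo$-module, that is, that this action respects the defining relation $QrQ-P(r)Q+QP(r)+\lambda Qr$ of $\rbo$; a direct computation reduces this consistency to Eq.~(\mref{eq:rrbm}). Once this is verified the $Q$-balancing relation is legitimate, and the remaining ingredients---the categorical identification of $F(X)$, the direct-sum criterion of Proposition~\mref{dir-sum-flat}, and the exactness of the identity functor---are routine.
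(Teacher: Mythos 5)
Your proposal is correct, and on the crucial step it takes a genuinely different route from the paper's. Both arguments reduce to the rank-one case by direct sums: you combine $F(X)\cong\bigoplus_{x\in X}(\rbo,\bar{P})$ with Proposition~\ref{dir-sum-flat}, while the paper decomposes $\calm_R(X)/I_X\cong\bigoplus_{x\in X}\calm_R(\{x\})/I_{\{x\}}$ and invokes Lemmas~\ref{dir-otim-iso} and~\ref{dir-sum-injective}. The difference lies in how rank-one flatness is obtained. The paper stays inside the explicit model and writes down a pair of allegedly inverse maps between $M\otr(\calm_R(\{x\})/I_{\{x\}})$ and $M$; you instead transport the whole problem to the ring $\rbo$, where the free module on $X$ is $\bigoplus_{x\in X}\rbo$, the Rota-Baxter tensor product is $\otimes_{\rbo}$, and rank-one flatness is the classical isomorphism $L\otimes_{\rbo}\rbo\cong L$, natural in $L$. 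Your route buys two things: it is the same mechanism the paper itself uses in proving Theorem~\ref{flat-iso}, and it bypasses the delicate explicit formula; indeed, the paper's map $f\colon v\otimes((r_1\ot\cdots\ot r_n)x+I_{\{x\}})\mapsto vr_1\cdots r_n$ does not respect the relation $p_M(v)\otimes y=v\otimes\bar{p}(y)$, nor is it constant on cosets of $I_{\{x\}}$; the correct map must interleave $p_M$, e.g.\ $v\otimes((r_1\ot r_2)x+I_{\{x\}})\mapsto p_M(vr_1)r_2$, which is exactly what the right $\rbo$-action produces automatically. The cost is that your argument rests on two compatibilities the paper never states formally, since Proposition~\ref{pp:corr} covers only left modules: (i) that $l\cdot Q:=p_L(l)$ makes a right $\rbeta$-module into a right $\rbo$-module, which you correctly flag and which does check out, as $l\cdot(QrQ-P(r)Q+QP(r)+\lambda Qr)=p_L(p_L(l)r)-p_L(lP(r))+p_L(l)P(r)+\lambda p_L(l)r$ vanishes precisely by Eq.~(\ref{eq:rrbm}); and (ii) that balancing over the generators $R\cup\{Q\}$ already generates all $\rbo$-balancing relations, which you pass over with a bare ``Hence''. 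Point (ii) requires the short closure argument: the set of $a\in\rbo$ such that $la\otimes n-l\otimes an$ lies in the subgroup generated by the $R$- and $Q$-balancing relations for all $l,n$ is closed under sums and products and contains $R$ and $Q$, hence equals $\rbo$. With (i) and (ii) written out, your proof is complete, and it also serves as a correct replacement for the paper's explicit rank-one computation.
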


\begin{proof}
Let $(\calm_R(X)/I_X,p)$ be the free left $\rbeta$-module on $X$ defined in Theorem~\mref{thm:freem}. We just need to prove that if $(N',P_{N'})\to (N,p_N)$ is a monomorphism of right $\rbeta$-modules, then $N'\ot_{\rbeta} (\calm_R(X)/I_X)\to N\ot_{\rbeta}(\calm_R(X)/I_X)$ is a monomorphism of abelian groups.
We prove this in several steps.

First, for each right $\rbeta$-module $(M, p)$ and singleton $X=\{x\}$, we have
$$M\ot_{\rbeta}(\calm_R(\{x\})/ I_{\{x\}})\cong M.$$
This can be achieved by defining
\begin{eqnarray*}
f: M\ot_{\rbeta} \Big(\Big(\bigoplus_{n\geq 1}R^{\otimes n}\Big)x/I_{\{x\}}\Big) \longrightarrow M, \
v\ot ((r_1\ot \cdots r_n)x+I_{\{x\}}) \mapsto vr_1\cdots r_n,
\end{eqnarray*}
and
\begin{eqnarray*}
f': M \longrightarrow M\ot_{\rbeta} \Big(\Big(\bigoplus_{n\geq 1}R^{\otimes n}\Big)x/I_{\{x\}}\Big), \
v \mapsto v\ot (x+I_{\{x\}}).
\end{eqnarray*}
Then it is easy to check that $f\circ f'=id_{M}$ and $f'\circ f=id_{M\ot_{\rbeta}((\oplus_{n\geq 1}R^{\otimes n})x/I_{\{x\}})}$. Thus the maps $f, f'$ are bijective.

Second, for any set $X$, by noting $\calm_R(X)/I_X \cong \bigoplus_{x\in X}\calm_R(\{x\})/I_{\{x\}})$, we have
\begin{eqnarray*}
M\ot_{\rbeta} (\calm_R(X)/I_X) &\cong& M\ot_{\rbeta} \Big(\bigoplus_{x\in X}\calm_R(\{x\})/I_{\{x\}}\Big)\\ &\cong& \bigoplus_{x\in X}(M\ot_{\rbeta} \calm_R(\{x\})/I_{\{x\}}) \quad (\text{by Lemma~\ref{dir-otim-iso}}) \\
&\cong& \bigoplus_{x\in X} M.
\end{eqnarray*}

Consequently, $N'\ot_{\rbeta} (\calm_R(X)/I_X)\cong \oplus_{x\in X} N'$, and $N\ot_{\rbeta}(\calm_R(X)/I_X)\cong \oplus_{x\in X} N$. Then by Lemma~\ref{dir-sum-injective}, the group homomorphism $\oplus_{x\in X} N'\longrightarrow \oplus_{x\in X} N$ is injective. So the free left $\rbeta$-module $(\calm_R(X)/I_X, p)$ is flat.
\end{proof}

\begin{lemma}\label{proj-free}
Let $\rbeta$ be a Rota-Baxter {\bf k}-algebra of weight $\lambda$.
Every projective left $\rbeta$-module is a direct summand of a free $\rbeta$-module.
\end{lemma}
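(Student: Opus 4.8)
The plan is to mimic the standard module-theoretic argument, exploiting that the category ${\bf _{\rbeta}Mod}$ is abelian (Proposition~\ref{pp:rbab}) and that it has enough free objects (Corollary~\ref{coro:quof}). First I would start from a projective left $\rbeta$-module $(V,p)$ and invoke Corollary~\ref{coro:quof}(\ref{it:quof}) to produce a free left $\rbeta$-module $(F(X),p_F)$ together with a left $\rbeta$-module epimorphism $\pi:(F(X),p_F)\longrightarrow (V,p)$. The goal is to show that this epimorphism splits inside the Rota-Baxter category, which then exhibits $(V,p)$ as a direct summand of $(F(X),p_F)$.

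The key step is to apply the defining property of projectivity (Definition~\ref{defn:proj}) to the epimorphism $f=\pi$ and the identity morphism $g=\id_V:(V,p)\longrightarrow (V,p)$. This yields a left $\rbeta$-module homomorphism $s:(V,p)\longrightarrow (F(X),p_F)$ with $\pi\circ s=\id_V$; that is, $\pi$ admits a section in ${\bf _{\rbeta}Mod}$. Setting $e:=s\circ\pi:(F(X),p_F)\longrightarrow (F(X),p_F)$ and using $\pi\circ s=\id_V$, one checks immediately that $e^2=s\circ(\pi\circ s)\circ\pi=s\circ\pi=e$, so $e$ is an idempotent left $\rbeta$-module endomorphism. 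Consequently $F(X)=\im(e)\oplus\ker(e)$ as left $R$-modules, where $\im(e)=s(V)$ (since $\pi$ is surjective) and $\ker(e)=\ker(\pi)$ (since $s$ is injective, being a section).

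It then remains to upgrade this $R$-module decomposition to a decomposition of left $\rbeta$-modules. This is where the only Rota-Baxter-specific verification occurs: I would observe that $e$, as a composite of left $\rbeta$-module homomorphisms, commutes with $p_F$, so both $\im(e)$ and $\ker(e)$ are $p_F$-stable, hence genuine left $\rbeta$-submodules of $(F(X),p_F)$. Moreover $s$ restricts to a Rota-Baxter isomorphism of $(V,p)$ onto $(s(V),p_F|_{s(V)})$, with inverse $\pi|_{s(V)}$. Therefore $(V,p)$ is a direct summand of the free left $\rbeta$-module $(F(X),p_F)$, as claimed.

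I do not anticipate a genuine obstacle: the argument is formally identical to the classical one, and the sole extra ingredient---that the section, the idempotent $e$, and the resulting summands all respect the operators---is automatic, because projectivity in Definition~\ref{defn:proj} already delivers the section $s$ as a morphism in the Rota-Baxter category and morphisms there commute with the operators by definition.
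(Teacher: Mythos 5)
Your proposal is correct and follows essentially the same route as the paper: realize the projective module as the image of an epimorphism from a free Rota-Baxter module, use projectivity against that epimorphism to obtain a section in ${\bf _{\rbeta}Mod}$, and read off the direct sum decomposition $F=\im(s)\oplus\ker(\pi)$, noting that everything respects the operators because the section and projection are Rota-Baxter morphisms. The only cosmetic differences are that the paper takes the free module on the underlying set of the module rather than citing Corollary~\ref{coro:quof}, and your explicit idempotent $e=s\circ\pi$ spells out the stability of the summands that the paper asserts in one line.
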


\begin{proof}
Let $(M,p)$ be a projective $\rbeta$-module. Let $(F(M),p')$ denote the free $\rbeta$-module over the set $M$. The identity map $id_M: (M,p) \rightarrow (M,p)$, when taken as a set map, gives a $(R,P)$-module epimorphism $f: (F(M),p') \twoheadrightarrow (M,p)$ such that $f|_M=id_M$. On the other hand, treating $id_M$ as a $(R,P)$-module homomorphism, the projectivity of $(M, p)$ gives a $(R,P)$-module homomorphism $\beta: (M,p) \rightarrow (F(M),p')$ such that $f\circ \beta = id_{(M,p)}$.
This gives
$$
F(M)=\im\, \beta \oplus \ker f \cong M\oplus \ker f.
$$
Since $\beta$ and $f$ are $(R,P)$-module homomorphisms, this is a direct sum of $(R,P)$-modules.
\end{proof}

By Proposition~\ref{dir-sum-flat}, Theorem~\ref{thm:flat} and Lemma~\ref{proj-free}, we obtain the following conclusion.

\begin{theorem}
Let $\rbeta$ be a Rota-Baxter {\bfk}-algebra of weight $\lambda$.
Then every projective left $\rbeta$-module is flat.
\mlabel{thm:enouf}
\end{theorem}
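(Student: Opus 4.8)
The plan is to chain together the three preceding results, which have already carried out all of the substantive work. Specifically, I would deduce the flatness of a projective module from the flatness of free modules (Theorem~\ref{thm:flat}), combined with the fact that flatness passes to direct summands (a special case of Proposition~\ref{dir-sum-flat}) and the structure theorem identifying projective modules as direct summands of free ones (Lemma~\ref{proj-free}).

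First I would let $(M,p)$ be an arbitrary projective left $\rbeta$-module. By Lemma~\ref{proj-free}, there is a free left $\rbeta$-module $(F,p')$ and a left $\rbeta$-module $(M',p'')$ such that $F\cong M\oplus M'$ as $\rbeta$-modules. Next, by Theorem~\ref{thm:flat}, the free module $(F,p')$ is flat. Finally, I would apply Proposition~\ref{dir-sum-flat} to the two-term direct sum $M\oplus M'$: since this direct sum is flat, the ``only if'' direction of that proposition forces each summand, and in particular $(M,p)$, to be flat.

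The only point requiring care is that Proposition~\ref{dir-sum-flat} is stated for an arbitrary indexing set $I$, whereas here I merely instantiate it with $I=\{1,2\}$, $M_1=M$ and $M_2=M'$, so no new argument is needed. All of the genuine difficulty---computing $M\otimes_{\rbeta}(\calm_R(\{x\})/I_{\{x\}})\cong M$ and thereby establishing that free modules are flat, together with proving the flatness equivalence for direct sums via the commutation of the tensor product with direct sums in Lemma~\ref{dir-otim-iso}---has already been settled in the earlier results. The present argument is therefore a short formal deduction, and I do not anticipate any substantive obstacle.
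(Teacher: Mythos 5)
Your proposal is correct and is exactly the paper's argument: the paper proves this theorem by the same one-line chain of Lemma~\ref{proj-free} (projective $\Rightarrow$ direct summand of free), Theorem~\ref{thm:flat} (free $\Rightarrow$ flat), and the ``only if'' direction of Proposition~\ref{dir-sum-flat} (summands of flat direct sums are flat). You have merely written out the instantiation $I=\{1,2\}$ explicitly, which the paper leaves implicit.
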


Theorem~\mref{thm:enouf} shows that there are enough flat Rota-Baxter modules, allowing us to define the Tor functors.

\smallskip

\noindent {\bf Acknowledgements}: This work was supported by the National Natural Science Foundation of China (Grant No.~11371177, 11371178 and 11501466). X. Gao thanks Rutgers University at Newark for its hospitality during his visit in 2015-2016.

\end{document}